\documentclass{amsart}
\usepackage{amsmath}    % 数学符号
\usepackage{amssymb}   % 数学符号
\usepackage{amsfonts}
\usepackage{array}      % 表格扩展功能
\usepackage{float}
\usepackage{booktabs}  % 用于更好的表格线
\usepackage{tabularx}
\usepackage{enumitem}
\usepackage{mathabx}
\usepackage{graphicx}
\usepackage{dynkin-diagrams}
\usepackage{color, xcolor} % 导入颜色包
\usepackage{verbatim}
\usepackage{psfrag}
\usepackage{amsthm} % 导入定理环境包
\usepackage{amscd}    % 专门用于 CD 交换图环境
\usepackage{tikz-cd}
\usepackage{wasysym}
\usepackage{xy}% 交换图支持（用于 \xymatrix）
\usepackage[colorlinks=true,
            linkcolor=red!50!black,
            citecolor=blue,
            urlcolor=teal, 
            pdftitle={Quasi-admissible, raisable nilpotent orbits and covering Barbasch-Vogan duality},
            pdfauthor={Yi-Yang Zhang}]{hyperref}
\usepackage[margin=3cm]{geometry}  % 所有边距设为 2cm
\theoremstyle{plain} % 默认样式：粗体标题，斜体正文

\usepackage[shortalphabetic]{amsrefs}
 \makeatletter
\def\append@label@year@{%
    \safe@set\@tempcnta\bib@year
    \edef\bib@citeyear{\the\@tempcnta}%
    \ifnum\bib@citeyear>9
      \append@to@stem{%
          \ifx\bib@year\@empty
          \else
            \@xp\year@short \bib@citeyear \@nil
          \fi
      }%
    \fi
}
\makeatother
\newtheorem{theorem}{Theorem}[section] % [section] 表示按章节编号
\newtheorem{lemma}[theorem]{Lemma} % [theorem] 表示与定理共享计数器
\newtheorem{proposition}[theorem]{Proposition}

\theoremstyle{definition} % 定义样式：粗体标题，正体正文

\theoremstyle{remark} % 备注样式：斜体标题，正体正文

\numberwithin{equation}{section}

\begin{document}

\title{Quasi-admissible, raisable nilpotent orbits and covering Barbasch-Vogan duality}
\author[Zhang]{Yi-Yang Zhang}
\address{School of Mathematical Sciences, Zhejiang University, Hangzhou 310058, China}
\email{3220102591@zju.edu.cn}

\subjclass[2020]{Primary 11F70, 22E50; Secondary 11F85, 22E60.}
\date{}

\keywords{nilpotent orbits, quasi-admissibility, rasiability, covering Barbasch-Vogan duality}%wavefront set

\begin{abstract}
For simply-connected Lie groups of type E over \( p \)-adic local field \( F \), we determine the degree of the cover required for a given $F$-split nilpotent orbit to be quasi-admissible or raisable, respectively. Combining this result with the previously computed data for other types by Gao-Liu-Tsai, we prove that all \(F\)-split nilpotent orbits whose geometry type contained in the image of the covering Barbasch-Vogan duality map $d_{\mathrm{BV},G}^{(n)}$ of almost-simple Lie groups $G$ in each Cartan type are always $\overline{G}^{(n)}$-quasi-admissible.
\end{abstract}

\maketitle

\setcounter{tocdepth}{2}  % 0: chapter, 1: section, 2: subsection, 3: subsubsection
\tableofcontents

\section{Introduction}
Let \( F \) be a \( p \)-adic local field of characteristic zero, and let \(\mathbf{G}\) be a connected split linear reductive group over \(F\). Denote by \(F^{\mathrm{alg}}\) the algebraic closure of \(F\). We consider the group \(G:=\mathbf{G}(F)\) or its Brylinski--Deligne covers \cite{BD01}
\[
\begin{tikzcd}
    \mu_n \arrow[r,hook]& \overline{G}^{(n)} \arrow[r,two heads]& G ,
\end{tikzcd}
\]
where we assume that $F$ contains the full group $\mu_n$ of $n$-th roots of unity. When the number \(n\) is understood, we also write \(\overline{G}:=\overline{G}^{(n)}\). Denote by \(\operatorname{Irr}_{\mathrm{gen}}(\overline{G})\) the set of equivalence classes of irreducible genuine representations of \(\overline{G}\), where a genuine representation of \(\overline{G}\) is one such that \( \mu_n \) acts via a fixed embedding \( \mu_n \hookrightarrow \mathbb{C}^\times \). Let \(\mathcal{N}(G)\) denote the partially ordered set of nilpotent orbits in \(\mathfrak{g}_F=\operatorname{Lie}(G)\) under the conjugation action of \(G\), the partial order being given by the closure ordering with respect to the usual topology on \(\mathfrak{g}_F\) induced from that of \(F\). There is also the set \(\mathcal{N}(\mathbf{G})\) of geometric (stable) nilpotent orbits, equipped with the partial order coming from the Zariski topology. For every \(\pi \in \operatorname{Irr}_{\mathrm{gen}}(\overline{G})\), one can write its character expansion at the identity as a distribution on \(\mathfrak{g}\)
\begin{equation}\label{eq:dist}
    \Theta_\pi \circ \exp = \sum_{\mathcal{O} \in \mathcal{N}(G)}c_\mathcal{O}(\pi) \widehat{v_{\mathcal{O}}},
\end{equation}
according to \cites{MW87,Varma14,Prakash15}. Here \(c_\mathcal{O}(\pi) \in \mathbb{C}\) and \(\widehat{v_{\mathcal{O}}}\) denotes the Fourier transform of the orbital integral over \(\mathcal{O}\). Define

\[
\mathcal{N}(\pi):=\{\mathcal{O} \in \mathcal{N}(G) : c_\mathcal{O}(\pi) \neq 0\}
\]
and let
\[
\text{WF}(\pi) \subset \mathcal{N}(\pi)
\]
be the subset consisting of all maximal elements in $\mathcal{N}(\pi)$. 
One also consider
\[
\text{WF}^{\text{geo}}(\pi) \subset \mathcal{N}(\pi) \otimes F^{\text{alg}} \subset \mathcal{N}(\mathbf{G})
\]
consisting of maximal elements in \( \mathcal{N}(\pi) \otimes F^{\text{alg}} \). Determining \(\operatorname{WF}(\pi)\) and \(\operatorname{WF}^{\mathrm{geo}}(\pi)\) is of great interest. For instance, one can use these sets to define the Gelfand--Kirillov dimension of \(\pi\) as
\[
d_{\mathrm{GK}}(\pi):=\frac{1}{2}\max\{\dim\mathcal{O}:\mathcal{O} \in \operatorname{WF}(\pi)\},
\]
which satisfies the asymptotic relation
\[
\dim\pi^K \approx c_\pi \cdot \operatorname{vol}(K)^{-d_{\mathrm{GK}}(\pi)},
\]
as certain open compact congruence subgroups \(K \subset \overline{G}\) shrink to the identity; here \(c_\pi \in \mathbb{C}\) depends only on \(\pi\) (see \cite{Savin94} for the linear case, which is also applicable to covering setting by \eqref{eq:dist}).

However, determining \(\operatorname{WF}(\pi)\) for a general \(\pi \in \operatorname{Irr}_{\mathrm{gen}}(\overline{G})\) is difficult. Some existing works provide sufficient conditions for a nilpotent orbit to lie in the complement
\[
\mathcal{N}(G)-\bigcup_{\pi \in \text{Irr}_{\text{gen}}(\overline{G})} \text{WF}(\pi).
\]
These conditions are expressed in terms of the notions of \textit{admissibility} \cites{Duflo82,Nevins99,Nevins02,Torasso97}, \textit{quasi-admissibility} \cite{GGS21,GLT25}, and \textit{raisability} \cites{JLS16,GLT25}. Their relations can be summarized as follows:
\begin{itemize}
    \item A \(\overline{G}\)-admissible orbit is always \(\overline{G}\)-quasi-admissible.
    \item If an orbit \(\mathcal{O}\) is either non-\(\overline{G}\)-quasi-admissible or \(\overline{G}\)-raisable, then \(\mathcal{O}\) does not belong to \(\operatorname{WF}(\pi)\) for any \(\pi \in \operatorname{Irr}_{\mathrm{gen}}(\overline{G})\).
\end{itemize}

\subsection{Main results}
In \cite{GLT25}, Gao-Liu-Tsai established explicit criteria on the degree \(n\) and on \(F\)-split nilpotent orbits for quasi-admissibility and raisability. Their calculations cover all simple Lie groups except those of type E. The first motivation of the present paper is to complete the computation begun in loc. cit. 

In Section~2 we provide the necessary preliminaries for our calculations and proofs. In Section~3 we complete the determination of quasi-admissibility and raisability for type E simply-connected groups, yielding our first result.

\begin{theorem}
    Let \(\overline{G}^{(n)}\) be the \(n\)-fold cover of a simply-connected exceptional group of type \(E_r\) with \(6 \leq r \leq 8\). Conditions on \(n\) for the quasi-admissibility and raisability of an \(F\)-split nilpotent orbit are given in Tables \ref{tab:e6}--\ref{tab:e8c}, whenever our method applies.
\end{theorem}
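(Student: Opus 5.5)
The plan is to follow the framework of Gao--Liu--Tsai \cite{GLT25} orbit by orbit, reducing both conditions to computations on the root datum of the cover.

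Fix an $F$-split nilpotent orbit $\mathcal{O}$ in $E_r$ with $6\le r\le 8$, indexed by its Bala--Carter label. Choose a representative $e$ in a Levi subalgebra $\mathfrak{l}$ for which $e$ is distinguished, and an $\mathfrak{sl}_2$-triple $(e,h,f)$ with $h$ dominant (weighted Dynkin diagram). Let $M_e$ be the reductive part of the centralizer; the split component group data come from Sommers' tables.

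\textbf{Quasi-admissibility.} I will use the criterion of \cite{GLT25}: $\mathcal{O}$ is $\overline{G}^{(n)}$-quasi-admissible exactly when the pullback of the cover to the relevant subgroup of $M_e$ splits, or is suitably compatible with the metaplectic character. Concretely, this becomes a condition on the Brylinski--Deligne invariant $Q$ restricted to the cocharacters of a maximal torus of $M_e$, and on certain coroots of $M_e$:
\[
n \mid Q(\alpha^\vee)\cdot c_\alpha,
\]
where the $c_\alpha$ are integers read off from how $M_e$ embeds in $G$. For simply-connected $E_r$ we normalize $Q(\alpha^\vee)=1$ for every coroot, since the group is simply laced. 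So the work is to compute, for each orbit, the coroots of a maximal torus of $M_e$ expressed in the coroot lattice of $G$, and evaluate $Q$ on them. Here $M_e$ is taken from Carter's tables of centralizer types, for example $A_1$, $G_2$, $F_4$ and so on.

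\textbf{Raisability.} Following \cite{JLS16} and \cite{GLT25}, $\mathcal{O}$ is raisable if some Levi $\mathfrak{l}$, with $\mathcal{O}$ the induction of an orbit $\mathcal{O}_L$ from $\mathfrak{l}$, has the property that the cover restricted to $L$ makes $\mathcal{O}_L$ non-quasi-admissible, or that a relevant quotient becomes trivial. In practice I will run through the induction data for each orbit (de Graaf--Elashvili tables of induced orbits in type E). For each candidate I will compute the modified degree $n_\alpha = n/\gcd(n,Q(\alpha^\vee))$ on $L$, reducing to the classical-type results of \cite{GLT25} for the factors of $L$, which are of types $A$, $D$ and $E_6$, $E_7$. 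Collecting these conditions produces the divisibility conditions on $n$ that fill Tables \ref{tab:e6}--\ref{tab:e8c}.

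\textbf{Main obstacle.} The hardest step is quasi-admissibility for distinguished and near-distinguished orbits in $E_8$, where $M_e$ is finite or small. The criterion then depends on the finite component group and its lift to the cover, which is not detected by a torus computation. For these I would compute the lifts of explicit representatives of the component group in Chevalley-generator form using the Steinberg relations for BD covers. Where this stays inconclusive the table records ``method does not apply,'' which is why the statement says ``whenever our method applies.''
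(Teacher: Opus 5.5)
\textbf{Gap 1: quasi-admissibility.} You mention compatibility with the metaplectic cover, but your concrete criterion \(n\mid Q(\alpha^\vee)c_\alpha\) only records \(\mathrm{Q}_1\), the value of \(Q\) on the image of a long coroot of \(\mathbf{G}_{\gamma,\mathrm{der}}\). The definition asks for a finite-dimensional \((n,2)\)-genuine representation of \(\overline{G}_\gamma^{(n)}\times_{G_\gamma}\overline{G}_\gamma^{(2),\phi}\). The second factor is the pull-back of the metaplectic cover along \(\phi\colon G_\gamma\to\mathrm{Sp}(\mathfrak{g}[1])\). So you also need \(\mathrm{Q}_2\), obtained by decomposing \(\mathfrak{g}[1]\) (from Jackson--No\"el) as an \(\mathfrak{sl}_{2,\xi}\)-module; each summand \(V_k\) contributes \(\binom{k+1}{3}\).

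By Proposition~\ref{prop:quasi}, the condition is \(n\mid\mathrm{Q}_1\) only when \(\mathrm{Q}_2\) is even. When \(\mathrm{Q}_2\) is odd it is \(n/\gcd(n,\mathrm{Q}_1)=2\). A pure divisibility condition is always met by \(n=1\), so your method cannot produce the following table entries:
\begin{itemize}
\item ``\(n=2\)'', e.g.\ \(A_5\) in \(E_6\) with \((\mathrm{Q}_1,\mathrm{Q}_2)=(1,3)\);
\item ``\(n=4\)'', ``\(n=8\)'', ``\(n=4,12\)'';
\item ``no such \(n\)'', e.g.\ \((3A_1)'\) in \(E_7\), with factors \((1,15)\) and \((1,8)\).
\end{itemize}
You also need the hypothesis that \(n^*\) is coprime to \(|\mathrm{Ker}\,\mathbf{f}|\). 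When it fails you need a separate argument: the \(\mathrm{SO}_3\) centralizers, and the \(\mathrm{PGL}_3\) of \(D_4(a_1)+A_2\) in \(E_8\), handled via Lemma~\ref{lem:SO} and its \(F^\times/3\) analogue.

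Your ``main obstacle'' is misplaced. Distinguished orbits are the trivial case: \(\mathbf{G}_{\gamma,\mathrm{der}}=1\), so they are quasi-admissible for all \(n\). The real work is determining \(\mathbf{G}_\gamma\) together with its embedding in \(\mathbf{G}_0\), because both \(\mathrm{Q}_1\) and the \(\mathfrak{g}[1]\)-decomposition depend on that embedding. The paper gets it from the prehomogeneous space \((\mathbf{G}_0,\mathfrak{g}[2])\), Sato--Kimura's generic isotropy groups, and Proposition~\ref{prop:restrict}. The abstract centralizer type from Carter's tables is not enough.

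\textbf{Gap 2: raisability.} Your raisability step uses the wrong notion. Raisability here has nothing to do with induction from Levi subalgebras or the de Graaf--Elashvili tables. The actual ingredients are:
\begin{itemize}
\item a homomorphism \(\tau\colon\mathfrak{sl}_2\to\mathfrak{g}_\gamma\) into the centralizer, so that \(u\) is raised to \(u+u_\tau\);
\item the weight conditions (C1)--(C3) on \(\tau\);
\item whether \(\overline{\mathrm{SL}}_2^{(n,2)}\) admits finite-dimensional \((n,2)\)-genuine representations, decided by \((\mathrm{Q}_1^\tau,\mathrm{Q}_2^\tau)\).
\end{itemize}
Here \(\overline{\mathrm{SL}}_2^{(n,2)}\) is the fibre product of the pull-back of the \(n\)-fold cover along \(\tau\) and the metaplectic cover along \(\phi_m\colon\mathrm{SL}_2\to\mathrm{Sp}_{2m}\), where \(\mathfrak{g}[1]=\mathfrak{g}[1]^{\mathfrak{sl}_{2,\tau}}\oplus mV_2\).

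In practice \(\tau\) is attached to a long root of \(\mathfrak{g}_\gamma\) that is also long in \(\mathfrak{g}\) (Proposition~\ref{prop:simple}). The ``n.a.'' entries arise exactly when \(\mathbf{G}_{\gamma,\mathrm{der}}=1\) or no \(\tau\) satisfies (C1)--(C3), e.g.\ \(A_2+2A_1\) in \(E_6\). They do not come from inconclusive component-group computations. No result relates raisability to non-quasi-admissibility of an orbit \(\mathcal{O}_L\) on a Levi, so your procedure would not produce the ``raisable if'' column.
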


As an application we consider the covering Barbasch--Vogan duality defined in \cite{GLLS26}, which is expected to provide an upper bound for \(\operatorname{WF}^{\mathrm{geo}}(\pi)\). Let \(\overline{G}^\vee\) be the complex Langlands dual group of \(\overline{G}^{(n)}\). The covering Barbasch--Vogan duality is a map
\[
d_{\mathrm{BV},G}^{(n)}: \mathcal{N}(\overline{G}^\vee) \rightarrow \mathcal{N}(\mathbf{G}).
\]
Denote by \(\mathrm{AZ}\) the Aubert--Zelevinsky involution and by \(\phi_\pi\) the hypothetical \(L\)-parameter of \(\pi\). Gao-Liu-Lo-Shahidi proposed the following conjecture.

    \begin{itemize}
        \item For every \(\pi \in \operatorname{Irr}_{\mathrm{gen}}(\overline{G}^{(n)})\), one has  
        \[
          \operatorname{WF}^{\mathrm{geo}}(\operatorname{AZ}(\pi)) \leq d_{\mathrm{BV}}^{(n)}(\mathcal{O}(\phi_{\pi})),
        \]
        the equality is attained for some \(\pi\) in the tempered \(L\)-packet.
    \end{itemize}

This conjecture generalises independent expectations formulated in the linear setting in \cite{HLLS24} and \cite{CK25}. In view of this conjecture, one expects that every \(F\)-split nilpotent orbit whose geometry type lying in the image of \(d_{\mathrm{BV}}^{(n)}\) should be \(\overline{G}^{(n)}\)-quasi-admissible. Our second motivation is to verify this expectation.

In Section~4 we prove that, for classical groups, every \(F\)-split nilpotent orbit whose geometry type lying in the image of the covering Barbasch--Vogan duality map \(d_{\mathrm{BV},G}^{(n)}\) is always \(\overline{G}^{(n)}\)-quasi-admissible. Combining this with the results of \cite{GLT25} and Section~3, we obtain the following theorem, which is consistent with the above conjecture.

\begin{theorem} \label{Main result}
    Let \(\overline{G}^{(n)}\) be the \(n\)-fold cover of any of \(\mathrm{GL}_r\), \(\mathrm{SO}_{2r+1}\), \(\mathrm{SO}_{2r}\), \(\mathrm{Sp}_{2r}\) and simply-connected almost-simple exceptional Lie group. Then for every positive integer \(n\), every \(F\)-split nilpotent orbit whose geometry type contained in the image of the covering Barbasch--Vogan duality map \(d_{\mathrm{BV},G}^{(n)}\) is \(\overline{G}^{(n)}\)-quasi‑admissible.
\end{theorem}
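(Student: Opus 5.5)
The proof is a case-by-case verification organized by Cartan type. The common reduction is the one recalled in the introduction. By \cite{GLT25}, quasi-admissibility of an $F$-split orbit $\mathcal{O}$ for $\overline{G}^{(n)}$ depends only on its geometric type and on $n$, through an explicit arithmetic condition. The condition asks that the covering torus attached to the centralizer, in particular the reductive part of the centralizer of an $\mathfrak{sl}_2$-triple through $\mathcal{O}$, has the required splitting relative to the metaplectic-type character coming from the $\mathbb{Z}$-grading. For each type it therefore suffices to list the orbits in $d_{\mathrm{BV},G}^{(n)}(\mathcal{N}(\overline{G}^\vee))$ and compare them with the quasi-admissibility criteria.

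\textbf{Exceptional types.} For $G_2$, $F_4$ and the simply-connected $E_6,E_7,E_8$, I would proceed as follows.

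First, I note that $d_{\mathrm{BV},G}^{(n)}$, as defined in \cite{GLLS26}, depends only on the residue of $n$ modulo a small integer. This integer divides $2$, $3$, $4$, $6$ or $12$ depending on the type, and it determines the dual group $\overline{G}^\vee$ together with the modified root datum. So only finitely many cases arise.

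Next, for each such class of $n$, I compute the image of the duality. It is a composite of the usual Barbasch--Vogan/Spaltenstein duality for the dual root datum with an inclusion or saturation step. I expect this image to be read off from Sommers' tables.

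Finally, I check each orbit in the image against the tables of \cite{GLT25} (for $G_2$, $F_4$) and against Tables~\ref{tab:e6}--\ref{tab:e8c} of Section~3 (for type $E$). The check is that the congruence condition on $n$ for quasi-admissibility holds for every $n$ in the given residue class.

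The entries in Section~3 marked "our method does not apply" need separate handling. I would first try to show that these orbits never lie in the image. Failing that, I would argue directly: if $\mathcal{O}$ is even, or its centralizer component group is trivial, then quasi-admissibility is automatic.

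\textbf{Classical types} ($\mathrm{GL}_r$, $\mathrm{SO}_{2r+1}$, $\mathrm{SO}_{2r}$, $\mathrm{Sp}_{2r}$). This is the content of Section~4.

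For $\mathrm{GL}_r$, every orbit is quasi-admissible for every $n$, because the centralizers are connected and the covering condition is trivial. So this case is immediate.

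For the other three groups, the image of $d_{\mathrm{BV}}^{(n)}$ is described combinatorially, in the spirit of \cite{GLLS26}. When $n$ is odd, the image consists of the usual special orbits of $\mathbf{G}$ obtained by transposition and $B$/$C$/$D$-collapse. When $n$ is even, the dual group changes type ($\mathrm{Sp}\leftrightarrow \mathrm{Sp}$ when $n\equiv 2 \bmod 4$, etc.), and the image is given by partition operations with an appropriate collapse.

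The \cite{GLT25} criterion for an $F$-split orbit with partition $\lambda$ is a parity condition. Roughly, whenever $n$ is even, certain rows of $\lambda$ of a fixed parity must occur with even multiplicity, or with restrictions on $\lambda_i$ modulo $n$. So the task is to prove that partitions in the image satisfy this condition. I would do this by induction on the rows: collapse and transposition preserve the needed multiplicity-parity property, because the image of transposition of an orthogonal or symplectic partition has controlled multiplicities.

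\textbf{Main obstacle.} I expect the hardest step to be the classical even-$n$ case. There the interaction between the collapse operation and the parity requirement on multiplicities is delicate, especially for $\mathrm{SO}_{2r}$, where very even partitions and the doubled orbits must be tracked.

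The fallback is to use the characterization of the image as the set of "$n$-special" partitions, that is, partitions whose transpose is a collapse. I would then show directly that $n$-special partitions meet the quasi-admissibility parity condition, arguing row by row with the standard special-partition characterization (odd parts in type $C$ of even multiplicity between consecutive even parts, etc.).
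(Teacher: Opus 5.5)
Your plan for the exceptional groups is the paper's: read off the image of $d^{(n)}_{\mathrm{BV},G}$ and compare it with Tables~\ref{tab:e6}--\ref{tab:e8c} and \cite{GLT25}. The classical half, however, has a genuine gap, beginning with a false claim in type $A$. Not every orbit of $\mathrm{GL}_r$ is $\overline{G}^{(n)}$-quasi-admissible. By Proposition~\ref{prop:QAA} one needs $n\mid p$ for every part $p$ of multiplicity at least $2$, so the zero orbit $[1^r]$ with $r\ge 2$ is quasi-admissible only for $n=1$. Connectedness of the centralizer is beside the point. The obstruction is the Brylinski--Deligne invariant of the cover restricted to the factor $\mathrm{GL}_d$ of $\mathbf{G}_\gamma$ attached to a part $p$ of multiplicity $d$; this factor sits diagonally $p$ times in $\mathrm{GL}_r$.

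In types $B,C,D$ you treat the criterion of Proposition~\ref{prop:QABCD} as a multiplicity-parity condition that only matters for even $n$, and the image for odd $n$ as the usual special orbits. Both are wrong.
\begin{itemize}
\item For every $n$, including odd $n>1$, the criterion imposes divisibility conditions, in addition to the parity of $\mathfrak{B}$ or $\mathfrak{A}$. In types $B,D$ these are $n\mid p_i$ or $\gcd(n,p_i)=n/2$ for repeated even parts, and $n\mid q_j$, $n\mid 2q_j$ or $n\mid 4q_j$ for odd parts of multiplicity $\ge 3$. In type $C$ the parities are exchanged.
\item The zero orbit of $\mathrm{Sp}_{2r}$ is special but is not $\overline{G}^{(3)}$-quasi-admissible, since $3\nmid 1$. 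So if the image for $n=3$ contained all special orbits, the theorem would be false. It does not, because $d^{(n)}_{\mathrm{com},A}$ sums blocks of $n$ columns of $\mathfrak{p}^t$ before collapsing.
\end{itemize}
An induction showing that transposition and collapse preserve multiplicity parities cannot produce divisibility by $n$. Your ``$n$-special'' fallback is neither defined nor proved.

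The missing ingredient is the paper's \emph{constant block} observation. A part of $d^{(n)}_{\mathrm{com},A}(\mathfrak{p})$ has the form $p_i'=\sum_{j=n(i-1)+1}^{ni}p''_j$ with $(p''_j)=\mathfrak{p}^t$ non-increasing. Hence $p_i'=p_{i+1}'$ forces all $2n$ summands to be equal, so $n\mid p_i'$; this alone settles type $A$.

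In types $B,C,D$ you then need three steps.
\begin{itemize}
\item The moves $(\star)$ of Proposition~\ref{prop:alg} change parts by $\pm1$ only inside strictly descending pairs, apart from the $\pm$ adjustment of an extreme part.
\item Hence a part of $\mathfrak{q}$ of the relevant parity and multiplicity must come from a constant block of $d^{(n^*)}_{\mathrm{com},A}(\mathfrak{p})$ (or of $d^{(n)}_{\mathrm{com},A}$, $d^{(n/2)}_{\mathrm{com},A}$ in type $C$). It is therefore divisible by the corresponding compression index.
\item The parity of $\mathfrak{B}(\mathfrak{q},q)$ or $\mathfrak{A}(\mathfrak{q},q)$ is transported back to the compressed partition. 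It is then compared with the case $n=1$ (Lemma~\ref{lem:n=1}) or with partial sums of $\mathfrak{p}$. For even $n^*$ this fixes the parity of $p_i'/n^*$, which decides between $n\mid q$ and $\gcd(n,q)=n/2$.
\end{itemize}

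Two smaller corrections on the exceptional side.
\begin{itemize}
\item $d^{(n)}_{\mathrm{BV},G}$ is not determined by a residue of $n$; already $d^{(n)}_{\mathrm{com},A}([r])=\mathfrak{s}(r;n)$ depends on $n$ itself. The images must be taken for each $n$ from \cite[Appendix~A]{GLLS26}, as the paper does.
\item Your fallback ``even, or trivial component group, implies quasi-admissible'' is false. For an even orbit $\mathrm{Q}_2=0$ and the condition is $n\mid \mathrm{Q}_1$. The zero orbit is even with connected centralizer, yet is quasi-admissible only for $n=1$.
\end{itemize}
The fallback is not needed anyway: every row of the tables decides quasi-admissibility, and ``n.a.'' refers only to raisability.
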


\subsection{Acknowledgement}
The author is deeply grateful to Professor Fan Gao, for invaluable guidance and support throughout this work.

\section{Preliminaries}
We assume throughout that \(F\) is a \(p\)-adic local field of characteristic zero and that \(\mathbf{G}\) is a connected split linear reductive group over \(F\) with Lie algebra \(\mathfrak{g}\).

\subsection{Nilpotent orbits}
Let \(u\) be a nilpotent element of \(\mathfrak{g}\). The Jacobson--Morozov theorem yields a Lie algebra homomorphism \(\gamma \colon \mathfrak{sl}_2 \to \mathfrak{g}\) such that \(u = \gamma\bigl(\begin{smallmatrix}0&0\\1&0\end{smallmatrix}\bigr)\). By abuse of notation, we also denote the image \(\gamma(\mathfrak{sl}_2)\) by \(\gamma\). Let \(\mathbf{G}_\gamma\) be the centralizer in \(\mathbf{G}\) of \(\gamma\). The nilpotent orbit \(\mathcal{O}_u\) is called \emph{\(F\)-split} if \(\mathbf{G}_\gamma\) is split over \(F\); following \cite{GLT25}, we shall always assume this to be the case.

Fix a Cartan subalgebra \(\mathfrak{h}\) containing the semisimple element \(h\) of the \(\mathfrak{sl}_2\)-triple associated with \(\gamma\), together with a root system \(\Phi\) and a set of simple roots \(\{\alpha_i\}\). Without loss of generality, we always assume that \(h\) lies in the dominant Weyl chamber. It is known that \(\alpha_i(h) \in \{0,1,2\}\) for every \(i\), see \cite{CM93}. Consequently, one may label each node of the Dynkin diagram of \(\mathfrak{g}\) with the integer \(\alpha_i(h)\), thereby obtaining a \textit{weighted Dynkin diagram}. Each nilpotent orbit corresponds uniquely to a weighted Dynkin diagram. The classification of nilpotent orbits for almost-simple exceptional Lie groups via weighted Dynkin diagrams was given by Bala and Carter, see, for example, \cite{Carter93}.

Relative to \(h\), the Lie algebra \(\mathfrak{g}\) admits a weight‑space decomposition
\[
\mathfrak{g} = \bigoplus_{i \in \mathbb{Z}} \mathfrak{g}[i],
\]
where \(\mathfrak{g}[i]\) is the weight \(i\) subspace of \(\mathfrak{g}\) with respect to the adjoint action of \(h\). Without loss of generality, it suffices to consider \(\mathfrak{g}[i]\) with \(i \ge 0\). From the weighted Dynkin diagram one can directly read off the following useful facts.

\begin{lemma} \label{prop:g0}
    Assume that \(\mathfrak{g}\) is semisimple. The Dynkin diagram of \(\mathfrak{g}[0]\) is obtained by keeping only the nodes corresponding to simple roots \(\alpha_i\) with \(\alpha_i(h)=0\). Moreover, the dimension of the centre of \(\mathfrak{g}[0]\) equals the number of nodes labeled by a value \(\alpha_j(h)>0\).
\end{lemma}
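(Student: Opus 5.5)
The plan is to identify \(\mathfrak{g}[0]\) with the centralizer of \(h\) in \(\mathfrak{g}\) and to describe it explicitly through the root decomposition relative to \(\mathfrak{h}\). Since \(h \in \mathfrak{h}\), we have \(\mathfrak{h} \subset \mathfrak{g}[0]\). Each root space \(\mathfrak{g}_\alpha\) has \(h\)-weight \(\alpha(h)\). Hence
\[
\mathfrak{g}[0] = \mathfrak{h} \oplus \bigoplus_{\alpha \in \Phi,\ \alpha(h)=0} \mathfrak{g}_\alpha,
\]
which is a reductive subalgebra containing the Cartan subalgebra \(\mathfrak{h}\). Its root system is \(\Phi_0 := \{\alpha \in \Phi : \alpha(h)=0\}\). (Reductivity follows since \(\Phi_0 = -\Phi_0\), or since it is a Levi subalgebra.)

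Next I determine \(\Phi_0\) using dominance of \(h\). Write \(\alpha = \sum_i c_i \alpha_i\), with all \(c_i \ge 0\) or all \(c_i \le 0\). Because every \(\alpha_i(h) \ge 0\), the value \(\alpha(h) = \sum_i c_i \alpha_i(h)\) vanishes if and only if \(c_i = 0\) for every \(i\) with \(\alpha_i(h) > 0\). So \(\Phi_0\) is the set of roots in the span of \(J := \{\alpha_i : \alpha_i(h) = 0\}\), namely the standard parabolic subsystem \(\Phi_J\). This subsystem has \(J\) as a base, and its Cartan matrix is the restriction of that of \(\Phi\). Thus the Dynkin diagram of \([\mathfrak{g}[0], \mathfrak{g}[0]]\), i.e.\ of \(\mathfrak{g}[0]\), is obtained by deleting the nodes with positive labels. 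The one point needing a citation is that \(J\) is a base of \(\Phi_J\) (standard, e.g.\ Bourbaki/Humphreys). This is the only mildly nontrivial step.

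For the centre, I decompose the reductive algebra \(\mathfrak{g}[0]\) as \(\mathfrak{z} \oplus [\mathfrak{g}[0], \mathfrak{g}[0]]\). The derived algebra is semisimple with Cartan subalgebra \(\mathfrak{h}' = \mathfrak{h} \cap [\mathfrak{g}[0], \mathfrak{g}[0]]\), which is spanned by the coroots \(\alpha_j^\vee\) for \(\alpha_j \in J\). Hence \(\dim \mathfrak{h}' = |J|\). The centre is \(\mathfrak{z} = \bigcap_{\alpha \in J} \ker \alpha \subset \mathfrak{h}\), because an element of \(\mathfrak{g}[0]\) is central if and only if it lies in \(\mathfrak{h}\) and kills \(\Phi_0\), which is generated by \(J\). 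Since \(\mathfrak{g}\) is semisimple, the simple roots form a basis of \(\mathfrak{h}^*\). Therefore \(\dim \mathfrak{z} = \dim \mathfrak{h} - |J|\), which is the number of nodes with \(\alpha_j(h) > 0\).
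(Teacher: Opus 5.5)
Your proof is correct and follows the same route as the paper. Like the paper, you decompose \(\mathfrak{g}[0]\) as \(\mathfrak{h}\) plus the root spaces with \(\alpha(h)=0\), identify those roots with the ones spanned by the zero-labelled simple roots, and compute the centre as the common kernel in \(\mathfrak{h}\) of those simple roots; you also make explicit two points the paper leaves implicit, namely the dominance of \(h\) (all \(\alpha_i(h)\ge 0\) and root coefficients of one sign) and the fact that \(J\) is a base of \(\Phi_J\).
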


\begin{proof}
    By definition,
    \[
    \mathfrak{g}[0] = \mathfrak{h} \oplus \bigoplus_{\alpha \in \Phi_0} \mathfrak{g}_\alpha,
    \]
    where \(\Phi_0 = \{\alpha = \sum_i m_i\alpha_i \in \Phi : \alpha_i(h) = 0,\; m_i \in \mathbb{Z},\ \forall i\}\). A positive root space \(\mathfrak{g}_\alpha\) belongs to \(\mathfrak{g}[0]\) precisely when \(\alpha\) is a sum of simple roots whose labels are zero; this gives the first assertion. For the second, note that the centre of \(\mathfrak{g}[0]\) is contained in \(\mathfrak{h}\). An element \(H \in \mathfrak{h}\) commutes with every \(\mathfrak{g}_\alpha\) (\(\alpha\in\Phi_0\)) if and only if \(\alpha_i(H)=0\) for all \(\alpha_i\) with \(\alpha_i(h)=0\). The dimension of the space of such \(H\) is exactly the number of simple roots with a non‑zero label.
\end{proof}

\begin{lemma} \label{lem:gi}
    For \(i > 0\), we have
    \[
    \mathfrak{g}[i] = \bigoplus_{\alpha \in \Phi_i} \mathfrak{g}_\alpha,
    \]
    where
    \[
    \Phi_i = \bigl\{\alpha = \sum_j m_j\alpha_j \in \Phi : \sum_j m_j\alpha_j(h) = i,\; m_j \in \mathbb{Z}_{\ge 0},\ \forall j \bigr\}.
    \]
\end{lemma}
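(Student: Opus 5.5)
The plan is to argue directly from the root space decomposition of \(\mathfrak{g}\) relative to the Cartan subalgebra \(\mathfrak{h}\), exactly as in the proof of Lemma~\ref{prop:g0}. Write
\[
\mathfrak{g} = \mathfrak{h} \oplus \bigoplus_{\alpha \in \Phi} \mathfrak{g}_\alpha .
\]
Since \(h \in \mathfrak{h}\), every summand is an eigenspace for \(\operatorname{ad}(h)\). On \(\mathfrak{h}\) the operator \(\operatorname{ad}(h)\) acts by \(0\), and on \(\mathfrak{g}_\alpha\) it acts by the scalar \(\alpha(h)\). Hence for each integer \(i\) the weight space \(\mathfrak{g}[i]\) is the direct sum of those summands on which \(\operatorname{ad}(h)\) acts by \(i\). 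For \(i>0\) the summand \(\mathfrak{h}\) contributes nothing, so
\[
\mathfrak{g}[i] = \bigoplus_{\alpha \in \Phi,\ \alpha(h)=i} \mathfrak{g}_\alpha .
\]

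It remains to identify the index set \(\{\alpha \in \Phi : \alpha(h) = i\}\) with \(\Phi_i\). Every root can be written \(\alpha = \sum_j m_j\alpha_j\), with integer coefficients \(m_j\) that are either all \(\geq 0\) or all \(\leq 0\). By linearity, \(\alpha(h) = \sum_j m_j \alpha_j(h)\). Since \(h\) is dominant, each \(\alpha_j(h) \in \{0,1,2\}\) is nonnegative.

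If \(\alpha\) is negative, all \(m_j \le 0\), so \(\alpha(h) \le 0\). Such a root therefore cannot satisfy \(\alpha(h) = i > 0\). Consequently every root with \(\alpha(h)=i>0\) is positive, meaning all \(m_j \in \mathbb{Z}_{\ge 0}\), and it satisfies \(\sum_j m_j\alpha_j(h) = i\); this says precisely that \(\alpha \in \Phi_i\). Conversely, any \(\alpha \in \Phi_i\) obviously has \(\alpha(h) = i\). So the two index sets coincide, which proves the lemma.

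There is no real obstacle here. The only point requiring care is the sign argument in the previous paragraph: the nonnegativity of the coefficients \(m_j\) in the definition of \(\Phi_i\) has to be justified, not assumed, and this uses the dominance assumption on \(h\) made just before Lemma~\ref{prop:g0}. The same argument shows that \(\mathfrak{g}[-i]\) is spanned by the negatives of the roots in \(\Phi_i\), which is consistent with the remark that it suffices to consider \(i \ge 0\).
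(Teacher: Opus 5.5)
Your proof is correct and takes the same approach as the paper, whose proof just says the argument is analogous to Lemma~\ref{prop:g0}: decompose $\mathfrak{g}$ as $\mathfrak{h}$ plus root spaces and read off the $\operatorname{ad}(h)$-eigenvalues. Your sign argument, which uses the dominance of $h$ to show that only positive roots (all $m_j \ge 0$) can satisfy $\alpha(h)=i>0$, supplies the one detail the paper leaves implicit.
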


\begin{proof}
    The proof is analogous to that of Lemma~\ref{prop:g0}.
\end{proof}

Let \(\mathbf{G}_h\) be the closed subgroup of \(\mathbf{G}\) with Lie algebra \(\mathfrak{g}[0]\) and write \(\mathrm{L} := \mathbf{G}_{h,\mathrm{der}}\). Then each \(\mathfrak{g}[i]\) becomes a representation of \(\mathrm{L}\). The structure of \(\mathfrak{g}[i]\) as an \(\mathrm{L}\)-module, which lies at the heart of our computations, was determined by Jackson and Noël in \cite{JN05}. 

\subsection{Quasi-admissible orbits}
Let \(Y\) denote the cocharacter lattice of a maximal split torus \(\mathbf{T}\) of \(\mathbf{G}\), and let \(Q \colon Y \to \mathbb{Z}\) be a Weyl-invariant quadratic form. We assume that the group \(F^{\times}\) contains the full group \(\mu_n\) of \(n\)th roots of unity. Consider the pair \( (D, \eta = \mathbf{1}) \), where \( D \) is a ``bisector'' of \( Q \) (see \cite[\S~2.6]{GG18}). In any case, associated with \( (D, \mathbf{1}) \) one has an \( n \)-fold central cover \(\overline{G}^{(n)}\) of \(G:=\mathbf{G}(F)\):

\[
\begin{tikzcd}
    \mu_n \arrow[r,hook]& \overline{G}^{(n)} \arrow[r,two heads]& G .
\end{tikzcd}
\]

For simplicity, we shall often write \(\overline{G}\) in place of \(\overline{G}^{(n)}\).

Assume that the derived group \(\mathbf{G}_{\mathrm{der}}\) of \(\mathbf{G}\) is almost simple. The \textit{Brylinski--Deligne invariant} of the covering \(\overline{G}\) is then defined as the integer \(\operatorname{Inv}_{\mathrm{BD}}(\overline{G}) := Q(\alpha^{\vee})\), where \(\alpha^{\vee}\) is any short coroot of \(\mathbf{G}\). This invariant enjoys the following functorial property \cite{GLT25}.

\begin{proposition}
    Let \(\zeta \colon \mathbf{G} \to \mathbf{H}\) be an algebraic group homomorphism, and denote by \(\zeta^{\sharp} \colon Y_G \to Y_H\) the induced map on cocharacter lattices. Let \(\overline{H}\) be an \(n\)-fold cover of \(\mathbf{H}(F)\) associated with a quadratic form \(Q_H\). Then the pull-back cover \(\zeta^{*}(\overline{H})\) of \(\mathbf{G}(F)\) satisfies
    \[
    \operatorname{Inv}_{\mathrm{BD}}\bigl(\zeta^{*}(\overline{H})\bigr)=Q_H \circ \zeta^{\sharp}(\alpha_G^{\vee}),
    \]
    where \(\alpha_G^{\vee}\) is any short coroot of \(\mathbf{G}\).
\end{proposition}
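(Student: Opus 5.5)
The plan is to reduce everything to the construction of Brylinski--Deligne covers and their pull-backs. Recall that the \(n\)-fold cover \(\overline{H}\) of \(\mathbf{H}(F)\) comes from a central extension of \(\mathbf{H}\) by \(\mathbf{K}_2\) with invariants \((D_H,\eta)\), where \(D_H\) is a bisector of the Weyl-invariant quadratic form \(Q_H\) on \(Y_H\). The pull-back \(\zeta^*(\overline{H})\) is the fibre product \(\mathbf{G}(F)\times_{\mathbf{H}(F)}\overline{H}\). At the level of Brylinski--Deligne data, it corresponds to the pulled-back \(\mathbf{K}_2\)-extension of \(\mathbf{G}\). The first step is to check that this extension has invariants \(\bigl((\zeta^\sharp)^*D_H,\ \eta\circ\zeta^\sharp\bigr)\). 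This is the functoriality of the Brylinski--Deligne classification (\cite{BD01}): the restriction of the extension to the split torus \(\mathbf{T}_G\) is the pull-back, along \(\zeta|_{\mathbf{T}_G}\), of its restriction to a maximal split torus \(\mathbf{T}_H\supseteq\zeta(\mathbf{T}_G)\). Since the form \(D\) is read off from the commutator and the torus extension, it pulls back along \(\zeta^\sharp\).

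It follows that the quadratic form attached to \(\zeta^*(\overline{H})\) is
\[
Q_G(y)=D_G(y,y)=D_H(\zeta^\sharp y,\zeta^\sharp y)=Q_H(\zeta^\sharp(y)),\qquad y\in Y_G .
\]
This form is Weyl-invariant for \(W_G\) because it arises from an honest Brylinski--Deligne extension of \(\mathbf{G}\). Alternatively, one can see this directly: when \(\zeta\) is compatible with the tori, \(\zeta^\sharp\) intertwines reflections in \(W_G\) with elements of \(W_H\) on the image.

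Second, we specialize to the definition. By definition \(\operatorname{Inv}_{\mathrm{BD}}(\zeta^*\overline{H})=Q_G(\alpha_G^\vee)\) for a short coroot \(\alpha_G^\vee\), which equals \(Q_H\circ\zeta^\sharp(\alpha_G^\vee)\). We must also check that this is independent of the choice of short coroot. Since \(\mathbf{G}_{\mathrm{der}}\) is almost simple, all short coroots form one \(W_G\)-orbit, and \(Q_G=Q_H\circ\zeta^\sharp\) is \(W_G\)-invariant, so the value is independent of the choice.

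The main obstacle is the first step: justifying that the invariant \(D\), and hence \(Q\), of the pull-back is exactly the pull-back of \(D_H\). For this I would use the explicit description in \cite{GG18}. Over the torus, the commutator of the covering torus is \([y_1(a),y_2(b)]=(a,b)_n^{B_Q(y_1,y_2)}\), and it pulls back verbatim under \(\zeta^\sharp\). This determines the bilinear form \(B_{Q_G}=B_{Q_H}\circ(\zeta^\sharp\times\zeta^\sharp\)), and therefore determines \(Q_G\) up to the standard identification \(Q(y)=\tfrac12 B_Q(y,y)\). This identification suffices to pin down \(Q_G(\alpha_G^\vee)\).
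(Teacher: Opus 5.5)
The paper does not prove this statement; it quotes \cite{GLT25}. Your overall route is the right one. Pull back the $\mathbf{K}_2$-extension along $\zeta$, identify its first Brylinski--Deligne invariant as $Q_H\circ\zeta^\sharp$, and use $W_G$-invariance to make the value on short coroots independent of the choice.

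\textbf{The gap.} The step you flag as the main obstacle is not established by the mechanism in your last paragraph. The commutator $[y_1(a),y_2(b)]=(a,b)_n^{B_Q(y_1,y_2)}$ on the $F$-points of the $n$-fold cover recovers $B_Q$ only modulo $n$. Through $B_Q(y,y)=2Q(y)$ it recovers $Q$ at best modulo $n/\gcd(n,2)$. For example, take $\mathrm{SL}_2$ and $n=2$. The torus commutator is $(a,b)_2^{2Q(\alpha^\vee)}=1$ both for $Q(\alpha^\vee)=1$ (the metaplectic cover) and for $Q(\alpha^\vee)=2$. So your argument cannot even detect the parity of $\mathrm{Q}_2$, which is exactly what Proposition~\ref{prop:quasi} uses.

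More fundamentally, the integer $\operatorname{Inv}_{\mathrm{BD}}$ is not an invariant of the topological cover at all. The choices $Q(\alpha^\vee)=k$ and $Q(\alpha^\vee)=k+n$ give the same $n$-fold cover of $\mathrm{SL}_2(F)$, since both are push-outs of Matsumoto's extension along $(a,b)_n^k$. Hence no argument carried out inside $\overline{G}^{(n)}$ can prove an identity of integers. The statement is about the $\mathbf{K}_2$-extensions, which is what ``associated with $Q_H$'' refers to.

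\textbf{The fix.} Run your key step at the $\mathbf{K}_2$-level. The cleanest version: for $y\in Y_G$, $Q_G(y)$ is the integer classifying the extension $y^*\overline{\mathbf{T}}_G$ of $\mathbf{G}_m$ by $\mathbf{K}_2$. Since $y^*(\zeta^*\overline{\mathbf{H}})\cong(\zeta\circ y)^*\overline{\mathbf{H}}=(\zeta^\sharp y)^*\overline{\mathbf{H}}$, you get $Q_G(y)=Q_H(\zeta^\sharp y)$ for every $y$. Two equivalent alternatives:
\begin{itemize}
\item pull back the incarnating cocycle $\{a,b\}^{D_H(y_1,y_2)}$ with values in $\mathbf{K}_2$, not in $\mu_n$;
\item use the $\mathbf{K}_2$-valued commutator $\{a,b\}^{B_Q(y_1,y_2)}$ at the sheaf level, where the exponent is an honest integer.
\end{itemize}

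\textbf{Two smaller points.} First, the commutator determines only $B_Q=D+{}^tD$, not $D$. The bisector is a choice of incarnation, which you may simply take to be $(\zeta^\sharp)^*D_H$. Second, your assertion that the second invariant of the pull-back is $\eta\circ\zeta^\sharp$ is not needed for the statement. It would require a separate argument whenever $\zeta$ does not carry root subgroups to root subgroups, as with the $2\Lambda_1$-type embeddings used throughout Section~\ref{sec:E}. Drop it.
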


Given a nilpotent element \(u\), one obtains a symplectic form \(\omega_u \colon \mathfrak{g}_F \times \mathfrak{g}_F \to F\) defined by \(\omega_u(x,y):=\kappa(u,[x,y])\), where \(\kappa\) denotes the Killing form. Restricting \(\omega_u\) to the weight space \(\mathfrak{g}[1]\) yields a non‑degenerate symplectic form that is preserved by the group \(G_\gamma:=\mathbf{G}_\gamma(F)\). Consequently, there is a natural homomorphism \(\phi \colon G_\gamma \to \operatorname{Sp}(\mathfrak{g}[1])\).

By pull-backs, we obtain the commutative diagram

\begin{equation}\label{eq:pullback}
\begin{tikzcd}
\overline{G}_{\gamma}^{(n)} \times_{G_\gamma} \overline{G}_{\gamma}^{(2),\phi} \arrow[r] \arrow[d] & 
\overline{G}_{\gamma}^{(2),\phi} \arrow[r] \arrow[d,two heads] & 
\operatorname{Mp}(\mathfrak{g}[1]) \arrow[d,two heads] \\
\overline{G}_{\gamma}^{(n)} \arrow[r,two heads] \arrow[d,hook] & 
G_{\gamma} \arrow[r, "\phi"] \arrow[d,hook] & 
\operatorname{Sp}(\mathfrak{g}[1])  \\
\overline{G}^{(n)} \arrow[r,two heads] & 
G . &
\end{tikzcd}
\end{equation}

We write $\overline{G}_\gamma^{(n,2)}:=\overline{G}_{\gamma}^{(n)} \times_{G_\gamma} \overline{G}_{\gamma}^{(2),\phi}$ for simplicity. 
Let $\mathbf{G}_{\gamma,0}$ be the connected component of the identity in $\mathbf{G}_\gamma$. Replacing \(G_\gamma\) by \(G_{\gamma,0}:= \mathbf{G}_{\gamma,0}(F) \subset G_\gamma\), one obtains analogous covers \(\overline{G}_{\gamma,0}^{(n)}\) and \(\overline{G}_{\gamma,0}^{(2),\phi}\).

A representation of a group \(H\) with \(\mu_n \times \mu_2 \subset Z(H)\) is called \textit{\((n,2)\)-genuine} if both central subgroups \(\mu_n\) and \(\mu_2\) act faithfully. An nilpotent orbit \(\mathcal{O}_u \subset \mathfrak{g}\) is said to be \textit{\(\overline{G}^{(n)}\)-quasi-admissible} if the group \(\overline{G}_\gamma^{(n,2)}\) admits a finite-dimensional \((n,2)\)-genuine representation. When the covering group is clear from the context, we shall simply speak of quasi-admissibility.

Let \(n^*:=\mathrm{lcm}(n,2)\), and \(\mathbf{f}:\mathbf{G}_{\gamma,\mathrm{sc}} \twoheadrightarrow \mathbf{G}_{\gamma,\mathrm{der}}\) be the simply-connected cover. Assume that \(G_{\gamma,0}\) is almost simple first. Set 
\[\mathrm{Q}_1:=\operatorname{Inv}_{\mathrm{BD}}\bigl(\overline{G}_{\gamma,0}^{(n)}\bigr),\quad \mathrm{Q}_2:=\operatorname{Inv}_{\mathrm{BD}}\bigl(\overline{G}_{\gamma,0}^{(2),\phi}\bigr).
\]
The following proposition, taken from \cite[Proposition~2.5]{GLT25}, makes the criterion for quasi-admissibility explicit and computable.

\begin{proposition} \label{prop:quasi}
     Keep the notion above. If \(n^*\) is coprime to the size of \(\mathrm{Ker}\mathbf{f}\), then the corresponding \(F\)-split nilpotent orbit $\mathcal{O}_u$ is quasi-admissiable if and only if
    \begin{align*}
        &\text{$(i)$ }n|\mathrm{Q}_1 \text{ when } 2|\mathrm{Q}_2 \text{, and} \\
        &\text{$(ii)$ } \frac{n}{\text{gcd}(n,\mathrm{Q}_1)}=2 \text{ when } 2 \nmid \mathrm{Q}_2.
    \end{align*}
\end{proposition}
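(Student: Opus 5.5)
The plan is to reduce quasi-admissibility to a question about a central extension of the almost simple group \(G_{\gamma,0}\), and then decide it by comparing two Brylinski--Deligne invariants.

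\textbf{Step 1: reduction to a Baer sum.} Write the fibre product \(\overline{G}_{\gamma,0}^{(n,2)}\) as the extension of \(G_{\gamma,0}\) by \(\mu_n\times\mu_2\). An \((n,2)\)-genuine finite-dimensional representation restricts, for each character \(\chi\) of \(\mu_n\times\mu_2\) that is faithful on both factors, to a \(\chi\)-isotypic part. By pushing out along \(\chi\), such a representation exists exactly when the pushed-out extension \(\chi_*(\overline{G}_{\gamma,0}^{(n,2)})\), a \(\mu_{n^*}\)-extension of \(G_{\gamma,0}\), admits a genuine finite-dimensional representation. This pushout is the Baer sum of the \(n\)-fold cover (pushed into \(\mu_{n^*}\)) and the metaplectic pullback (pushed into \(\mu_{n^*}\)). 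Its BD quadratic form, normalised as an \(n^*\)-fold cover, is
\[
\tfrac{n^*}{n}\,\mathrm{Q}_1+\tfrac{n^*}{2}\,\mathrm{Q}_2 \pmod{n^*}.
\]
For an arbitrary faithful \(\chi\), \(\mathrm{Q}_1\) is replaced by a unit multiple.

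\textbf{Step 2: which covers have genuine finite-dimensional representations.} For an almost simple \(\mathbf{G}_{\gamma,0}\), a BD cover of \(G_{\gamma,0}\) admits a genuine finite-dimensional representation iff it splits over the derived group. This holds because finite-dimensional representations of the \(p\)-adic group factor through the abelianisation on the perfect derived part, so a nontrivial cover of a simply connected group admits none. Pulling back along \(\mathbf{f}\), the cover splits over \(\mathbf{G}_{\gamma,\mathrm{sc}}(F)\) iff its BD invariant on a short coroot vanishes modulo \(n^*\). The hypothesis that \(n^*\) is coprime to \(|\mathrm{Ker}\,\mathbf{f}|\) is what lets us pass between \(G_{\gamma,\mathrm{sc}}\) and the image of \(\mathbf{f}\). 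Indeed, \(\mu_{n^*}\)-valued cocycles cannot detect the finite kernel or the finite cokernel of \(\mathbf{f}\) on \(F\)-points (note \(H^1(F,\mathrm{Ker}\,\mathbf{f})\) has order prime to \(n^*\)). It also lets us pass from the derived group to \(G_{\gamma,0}\), where the torus part is handled by the \(\chi\)-twist freedom.

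\textbf{Step 3: arithmetic.} The criterion becomes: there is a unit \(c\) modulo \(n\) with \(\frac{n^*}{n}c\mathrm{Q}_1+\frac{n^*}{2}\mathrm{Q}_2\equiv 0 \pmod{n^*}\).
\begin{itemize}
\item If \(2\mid \mathrm{Q}_2\), the second term vanishes and the condition is \(n\mid \mathrm{Q}_1\), which is (i).
\item If \(2\nmid\mathrm{Q}_2\), we need \(\frac{n^*}{n}\mathrm{Q}_1\equiv \frac{n^*}{2}\pmod{n^*}\).
\begin{itemize}
\item For \(n\) odd, \(n^*=2n\) and we get \(2\mathrm{Q}_1\equiv n \pmod{2n}\), which is impossible.
\item For \(n\) even, \(n^*=n\) and we get \(\mathrm{Q}_1\equiv n/2 \pmod n\), i.e.\ \(n/\gcd(n,\mathrm{Q}_1)=2\).
\end{itemize}
This is (ii).
\end{itemize}

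The main obstacle is Step 2, specifically justifying rigorously that the cover of \(G_{\gamma,0}\) (not only of its derived group) being genuine-representable is equivalent to vanishing of the invariant modulo \(n^*\). Here I would cite Brylinski--Deligne's classification together with Moore's result that \(H^2\) of the simply connected group is \(\mu(F)\), and then use coprimality to transfer the statement along \(\mathbf{f}\).
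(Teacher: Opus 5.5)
The paper gives no proof of its own here. The statement is quoted from \cite[Proposition~2.5]{GLT25}, and the text only records two things: \(\mathrm{Q}_1,\mathrm{Q}_2\) are read off from \(\mathbf{G}_{\gamma,\mathrm{der}}\), and the ``only if'' half survives without the coprimality hypothesis. Your route is the natural one and is consistent with those remarks: push out along a character faithful on both factors to a single \(\mu_{n^*}\)-cover with invariant \(\tfrac{n^*}{n}c\,\mathrm{Q}_1+\tfrac{n^*}{2}\mathrm{Q}_2\), then test splitting on the simply connected cover. Your Steps~1 and~3 are correct. The trouble is Step~2, which carries all the content.

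Your reason for the key lemma (a cover of \(H:=\mathbf{G}_{\gamma,\mathrm{sc}}(F)\) with a finite-dimensional genuine representation must split) is that finite-dimensional representations ``factor through the abelianisation''. That is essentially the claim itself, and perfectness alone gives nothing: \(\mathrm{SL}_2(\mathbb{C})\) is perfect and has its standard representation. The needed input is smoothness plus Tits' theorem. The kernel \(K\) of a finite-dimensional smooth genuine representation is open and normal, so its image in \(H\) is an open, hence non-central, normal subgroup, hence all of \(H\). Since \(K\cap\mu_{n^*}=1\), \(K\) maps isomorphically onto \(H\), i.e.\ it is the image of a splitting. This proves the ``only if'' half with no condition on \(\mathrm{Ker}\,\mathbf{f}\).

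For the ``if'' half, say exactly where coprimality is used, instead of appealing to cocycles not ``detecting'' the kernel and cokernel.
\begin{enumerate}
\item A splitting over \(H\) is unique since \(H\) is perfect, so its image \(J\) in the cover of \(G_{\gamma,0}\) is normal.
\item It descends to a splitting over \(I:=\mathbf{f}(H)\) if and only if the induced homomorphism \((\mathrm{Ker}\,\mathbf{f})(F)\to\mu_{n^*}\) is trivial. Coprimality forces this, and it is the only place the hypothesis enters; the cokernel \(\mathbf{G}_{\gamma,\mathrm{der}}(F)/I\) plays no role.
\item If that homomorphism is nontrivial, \(J\) is a perfect normal subgroup meeting \(\mu_{n^*}\) nontrivially. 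The same kernel argument (the image of the kernel contains \(I\)), together with \(J=[J,J]\), shows every finite-dimensional smooth representation is trivial on \(J\). This is why the ``if'' half can fail.
\end{enumerate}

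Your phrase ``the torus part is handled by the \(\chi\)-twist freedom'' must become an actual construction. Modulo the image of the splitting over \(I\), the cover of \(G_{\gamma,0}\) is a central extension of the abelian group \(G_{\gamma,0}/I\). This group is a quotient of \(T_\gamma(F)\) for a split maximal torus \(\mathbf{T}_\gamma\) of \(\mathbf{G}_{\gamma,0}\); this is where \(F\)-splitness is used. The commutator pairing is bimultiplicative with values in \(\mu_{n^*}\), so it kills \(T_\gamma(F)^{n^*}\), which has finite index. Hence the centre has finite index, and inducing a genuine character of the centre gives the required finite-dimensional representation. This step uses finiteness of \(F^\times/F^{\times n^*}\), not coprimality.

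Lastly, quasi-admissibility is defined through \(\overline{G}_\gamma^{(n,2)}\), not \(\overline{G}_{\gamma,0}^{(n,2)}\). Add that \(G_{\gamma,0}\) has finite index in \(G_\gamma\), so restriction and induction move finite-dimensional \((n,2)\)-genuine representations between the two covers.
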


By the proof of this proposition in \cite{GLT25}, we know that the computation of the invariants \(\mathrm{Q}_1\) and \(\mathrm{Q}_2\) only depends on the structure of $\mathbf{G}_{\gamma,\mathrm{der}}$. When \(\mathbf{G}_{\gamma,\mathrm{der}}\) decomposes as product of almost-simple groups, the quasi-admissibility condition factors accordingly \cite[p.18]{GLT25}.

\begin{proposition} \label{prop:product}
    Suppose \(\mathbf{G}_{\gamma,\mathrm{der}} = \prod\mathbf{G}^{a_i} \) with \(\mathbf{G}^{a_i}\)'s almost-simple. Let \((\mathrm{Q}_1^{a_i},\mathrm{Q}_2^{a_i})\) be the pair of invariants attached to the factor \(\mathbf{G}^{a_i}\) for all \(i\) as in Proposition~\ref{prop:quasi}. Then the condition on \(n\) for the quasi-admissibility of \(\mathbf{G}_{\gamma,\mathrm{der}}\) is the intersection of the conditions for all the \(\mathbf{G}^{a_i}\)'s separately.
\end{proposition}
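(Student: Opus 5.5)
The plan is to reduce quasi-admissibility for a product $\mathbf{G}_{\gamma,\mathrm{der}} = \prod \mathbf{G}^{a_i}$ to the almost-simple case of Proposition~\ref{prop:quasi}. The key tool is that an $(n,2)$-genuine finite-dimensional representation exists exactly when a certain $\mu_n\times\mu_2$-extension of the group is ``trivialisable'' on the relevant subgroup. As noted after Proposition~\ref{prop:quasi}, the argument of \cite{GLT25} only involves the derived group. So I would first replace $G_\gamma$ by the image of $\mathbf{f}$ applied to the $F$-points of $\prod \mathbf{G}^{a_i}_{\mathrm{sc}}$, and pull both covers $\overline{G}^{(n)}_\gamma$ and $\overline{G}^{(2),\phi}_\gamma$ back along $\mathbf{f}$.

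Step 1: Over a product of simply-connected almost-simple groups, a Brylinski--Deligne cover is determined by its Weyl-invariant quadratic form on $Y_{\mathrm{sc}} = \bigoplus Y^{a_i}$. Weyl invariance forces the form to be orthogonal with respect to this decomposition. Hence the pulled-back cover is the contracted product (the Baer sum over the common $\mu_n$) of the covers $\overline{G}^{a_i}$ with invariants $\mathrm{Q}_1^{a_i}$. The same holds for the $\mu_2$-cover via $\phi$, with invariants $\mathrm{Q}_2^{a_i}$. The invariants themselves are computed by the functoriality proposition applied to each inclusion $\mathbf{G}^{a_i}\to\mathbf{G}$.

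Step 2: Show that $\overline{\prod G^{a_i}}^{(n,2)}$ has an $(n,2)$-genuine finite-dimensional representation iff each $\overline{G^{a_i}}^{(n,2)}$ does.

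For ``if'', take the external tensor product of genuine representations of the factors. It descends to the contracted product because all factors' $\mu_n\times\mu_2$ act by the same fixed character.

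For ``only if'', restrict a genuine representation to the preimage of a factor $G^{a_i}$. That preimage is exactly $\overline{G^{a_i}}^{(n,2)}$, and $\mu_n\times\mu_2$ still acts faithfully by the same character. So the restriction is an $(n,2)$-genuine finite-dimensional representation of the factor.

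The set of $n$ that works is therefore the intersection of the conditions from Proposition~\ref{prop:quasi} for each factor.

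The main obstacle is the passage through $\mathbf{f}$ and the finite kernel and component group. One must check that an $(n,2)$-genuine representation of the cover of $G_\gamma$ (or $G_{\gamma,0}$) exists iff one exists for the pullback to $\prod G^{a_i}_{\mathrm{sc}}(F)$. Here I would use the coprimality hypothesis on $n^*$ and $|\mathrm{Ker}\,\mathbf{f}|$ exactly as in the almost-simple case. Since the kernel is central of order prime to $n^*$, the restriction of the cover to it splits uniquely. The torus and component contributions are also absorbed as in \cite{GLT25}, since they do not affect the derived-group invariants. With that, the reduction reproduces the argument of loc.\ cit.\ factor by factor.
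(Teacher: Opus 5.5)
Your proposal is correct. The paper gives no argument of its own here, only the remark that the invariants depend on $\mathbf{G}_{\gamma,\mathrm{der}}$ and a citation to \cite[p.~18]{GLT25}, and your reduction is essentially the argument being invoked: pull back to $\prod \mathbf{G}^{a_i}_{\mathrm{sc}}$, build $(n,2)$-genuine representations by external tensor products and detect them by restriction to each factor, and treat $\operatorname{Ker}\mathbf{f}$, the central torus and the component group as in the almost-simple case under the coprimality hypothesis. One simplification: Step 1 does not need the Brylinski--Deligne classification, since lifts of different factors commute automatically (the commutator pairing into the central $\mu_n\times\mu_2$ is multiplicative in each variable and each $\mathbf{G}^{a_i}_{\mathrm{sc}}(F)$ is perfect), and this is all the tensor-product construction uses.
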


In the calculation of the invariants \(\mathrm{Q}_1\) and \(\mathrm{Q}_2\), the \(\mathfrak{sl}_2\)-triple corresponding to a long root of \(\mathbf{G}_{\gamma,\mathrm{der}}\) plays a crucial role. We shall denote this triple by \(\mathfrak{sl}_{2,\xi}\) for brevity. When a factorisation as in Proposition~\ref{prop:product} occurs, we write \(\mathfrak{sl}_{2,\xi_{a_i}}\) for the corresponding triples attached to the factors \(\mathbf{G}^{a_i}\) for all \(i\).

If the assumption in Proposition~\ref{prop:quasi} fails, namely, \(n^*\) fails to be coprime to the size of \(\mathrm{Ker}\mathbf{f}\), the “only if" part of the statement still hold. Thus, we just need check quasi-admissibility by definition for finite many cases. Fortunately, such phenomena are rare and always can be checked by the following lemma \cite[Lemma~2.8]{GLT25} or its analogue.

\begin{lemma} \label{lem:SO}
    The Brylinski-Deligne cover \(\overline{\mathrm{SO}}_k^{(m)},k \geq 3\) that associated with data \((D, \mathbf{1})\) has a finite-dimensional \(\mu_m\)-genuine representation if and only if \(m \mid \mathrm{Inv}_{\mathrm{BD}}(\mathrm{SO}_k^{(m)})\).
\end{lemma}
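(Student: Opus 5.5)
We have to prove Lemma \ref{lem:SO}: the Brylinski--Deligne cover $\overline{\mathrm{SO}}_k^{(m)}$ ($k\ge 3$) attached to $(D,\mathbf{1})$ has a finite-dimensional $\mu_m$-genuine representation if and only if $m \mid \mathrm{Inv}_{\mathrm{BD}}(\mathrm{SO}_k^{(m)})$. The plan is to prove the two directions separately: ``if'' by showing the cover splits, ``only if'' by forcing a finite-dimensional genuine representation to be trivial on a unipotent-generated subgroup.

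\textbf{The ``if'' direction.} Suppose $m \mid Q(\alpha^\vee)$ for a short coroot $\alpha^\vee$. By Weyl invariance and the relation between $Q$ on long and short coroots, $Q$ is then divisible by $m$ on all coroots. The first step is to conclude that the cover restricted to each root $\mathrm{SL}_2$ is trivial. For this, use the standard Brylinski--Deligne formula: the pull-back to $\mathrm{SL}_2$ along $\alpha$ is the $Q(\alpha^\vee)$-th power of the Kubota cocycle, hence trivial as an $m$-fold cover.

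The second step is to pass from the root subgroups to the whole group. Since $(D,\mathbf{1})$ is chosen with $\eta=\mathbf{1}$, the torus part is governed by the bisector $D$, and $D$ is congruent to a form divisible by $m$ on the relevant sublattice. I expect the extension class to be an $m$-th multiple, so that $\overline{\mathrm{SO}}_k^{(m)} \cong \mu_m \times \mathrm{SO}_k(F)$ on the nose. The cleanest way I see to justify this is the Brylinski--Deligne classification: $m$-fold covers obtained from $(D,\eta)$ depend only on $D$ modulo $m$, up to isomorphism. With the splitting in hand, take the character $\mu_m \hookrightarrow \mathbb{C}^\times$ tensored with the trivial representation of $\mathrm{SO}_k(F)$; this is a one-dimensional genuine representation.

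\textbf{The ``only if'' direction.} Let $\sigma$ be a finite-dimensional genuine representation. Then $\sigma$ is trivial on the canonical lifts of all unipotent elements, since covers split uniquely over unipotent subgroups and finite-dimensional smooth representations kill unipotents. The preimage of $\mathrm{SO}_k(F)^+$, the group generated by unipotents, then acts through a quotient in which the commutator relations of the cover must collapse.

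The key relation is $[\overline{h_\alpha(a)},\overline{h_\alpha(b)}] = (a,b)_m^{Q(\alpha^\vee)}$ for a short coroot $\alpha^\vee$. Since $h_\alpha(a)$ lies in the group generated by unipotents, $\sigma$ is trivial on it, so this commutator acts trivially. Genuineness then gives $(a,b)_m^{Q(\alpha^\vee)}=1$ for all $a,b \in F^\times$. Because the Hilbert symbol $(\cdot,\cdot)_m$ is nondegenerate, this forces $m \mid Q(\alpha^\vee)$, as claimed.

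\textbf{Main obstacle.} The hard step is the global splitting in the ``if'' direction, specifically the non-simply-connected issue: $\mathrm{SO}_k$ has cocharacter lattice strictly larger than the coroot lattice. I need divisibility of $Q$ on the coroot lattice to control $Q$, and the bilinear form $B_Q$, on all of $Y$. I would check this directly with the standard basis $e_i$ of $Y$. There $Q(e_i)$ equals $Q(\alpha^\vee)$ for short coroots when $k$ is odd, and $Q(\alpha^\vee)/2$-type issues arise when $k$ is even; these would be handled by the explicit choice of $D$.
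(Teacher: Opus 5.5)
\textbf{The ``if'' direction has a genuine gap: the cover does not split in general.} Your Step 2 claims $\overline{\mathrm{SO}}_k^{(m)}\cong\mu_m\times\mathrm{SO}_k(F)$ whenever $m\mid\mathrm{Inv}_{\mathrm{BD}}$. This is false, and no choice of $D$ can repair it. Every bisector satisfies $D(y,y)=Q(y)$, so for the standard section $\mathbf{s}$ the restriction of $\overline{T}$ to $e_i\otimes F^\times$ is given by the cocycle $(a,b)_m^{Q(e_i)}$.

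Your identification $Q(e_i)=Q(\alpha^\vee)$ is wrong. For $k\ge 5$ the short coroots are $\pm e_i\pm e_j$, so $Q(e_i)=\mathrm{Inv}_{\mathrm{BD}}/2$. For $k=3$ the coroot is $2e_1$, so $Q(e_1)=\mathrm{Inv}_{\mathrm{BD}}/4$. Hence $D\not\equiv 0\pmod m$ in general. Two concrete failures:
\begin{itemize}
\item Take $\mathrm{SO}_3$ with $\mathrm{Inv}_{\mathrm{BD}}=24$ and $m=8$, which is exactly the case the paper needs for $A_3+A_2+A_1$ in $E_7$. Then $Q(e_1)=6$ and $[\mathbf{s}(e_1(a)),\mathbf{s}(e_1(b))]=(a,b)_8^{12}=(a,b)_2\neq 1$. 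So $\overline{T}$ is non-abelian, the cover admits no genuine character at all, and your one-dimensional representation does not exist.
\item Even when $\overline{T}$ is abelian, splitting can fail. For $k\ge 5$, $m=2$, $Q(e_i)=1$, the cocycle $(a,b)_2$ on $F^\times$ is not a coboundary as soon as $(-1,-1)_2=-1$, e.g.\ over $\mathbb{Q}_2$.
\end{itemize}

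\textbf{The missing idea.} Split only over the subgroup $\mathrm{SO}_k(F)^+$ generated by unipotents, i.e.\ the image of $\mathrm{Spin}_k(F)$, and use that it has finite index. This is the route of \cite[Lemma~2.8]{GLT25}, which is why the paper's remark in the $D_4(a_1)+A_2$ case speaks of replacing $F^\times/2$ by $F^\times/3$. The steps are:
\begin{enumerate}
\item The pull-back to the simply connected $\mathrm{Spin}_k$ has the same BD invariant, so it splits when $m\mid\mathrm{Inv}_{\mathrm{BD}}$.
\item Using $\eta=\mathbf{1}$, check that this splitting kills $\ker(\mathrm{Spin}_k(F)\to\mathrm{SO}_k(F))$. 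Then $\mathrm{SO}_k(F)^+$ lifts to a normal subgroup meeting $\mu_m$ trivially.
\item The quotient is a central extension by $\mu_m$ of the finite group $\mathrm{SO}_k(F)/\mathrm{SO}_k(F)^+\cong F^\times/(F^\times)^2$, via the spinor norm.
\item Any irreducible constituent of the representation induced from the chosen character of $\mu_m$ is genuine. Its inflation is the required finite-dimensional genuine representation, typically of dimension greater than one.
\end{enumerate}

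\textbf{A smaller error in the ``only if'' direction.} Your commutator formula is off by a factor of two: in fact $[\overline{h}_\alpha(a),\overline{h}_\alpha(b)]=(a,b)_m^{2Q(\alpha^\vee)}$, which only yields $m\mid 2Q(\alpha^\vee)$. You have already noted that $\sigma$ is trivial on each canonical lift $\overline{h}_\alpha(a)$, since it is a product of canonical unipotent lifts. So apply $\sigma$ to the relation $\overline{h}_\alpha(a)\overline{h}_\alpha(b)=(a,b)_m^{Q(\alpha^\vee)}\overline{h}_\alpha(ab)$ instead. This gives $(a,b)_m^{Q(\alpha^\vee)}=1$ for all $a,b$, hence $m\mid Q(\alpha^\vee)$.
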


\subsection{Raisable orbits}
We now introduce the notion of \(\overline{G}\)-raisability, following \cite{JLS16}. 

Let \(\mathfrak{g}_\gamma \subset \mathfrak{g}\) be the centraliser of \(\gamma\) in \(\mathfrak{g}\). Assume the following condition:

\begin{center}
\begin{minipage}{0.8\textwidth}
\begin{itemize}
    \item[(C0)] There exists a non‑trivial homomorphism \(\tau \colon \mathfrak{sl}_2 \to \mathfrak{g}_\gamma\).
\end{itemize}
\end{minipage}
\end{center}

In this situation we write \(\mathfrak{sl}_{2,\tau} = \tau(\mathfrak{sl}_2)\). For a non‑zero \(x \in F\), set \(u_\tau := \tau(e_{-}(x))\). Then \(\gamma \oplus \tau \colon \mathfrak{sl}_2 \to \mathfrak{g}\) is a Jacobson–Morozov homomorphism for the nilpotent element \( u + u_\tau\). Denote by \(\mathfrak{g}[j,l] \subset \mathfrak{g}\) the subspace of vectors that have \(\gamma\)-weight \(j\) and \(\tau\)-weight \(l\). We impose three further conditions:

\begin{center}
\begin{minipage}{0.8\textwidth}
\begin{itemize}
    \item[(C1)] All \(\tau\)-weights occurring in \(\mathfrak{g}\) are bounded by \(2\).
    
    \item[(C2)] As an \(\mathfrak{sl}_{2,\tau}\)-module,
        \[
        \mathfrak{g}[1] = \mathfrak{g}[1]^{\mathfrak{sl}_{2,\tau}} \oplus m V_2.
        \]
    
    \item[(C3)] \(\dim \mathfrak{g}[0,2] = 1 + \dim \mathfrak{g}[2,2]\).
\end{itemize}
\end{minipage}
\end{center}

By slight abuse of notation we still write \(\tau \colon \mathrm{SL}_2 \to G_\gamma\) for the group homomorphism corresponding to the Lie algebra map \(\tau\). Condition (C2) yields a natural homomorphism \(\phi_m \colon \mathrm{SL}_2 \to \mathrm{Sp}_{2m}\) induced by \(\phi \circ \tau\). Proceeding in complete analogy with the construction leading to diagram~\eqref{eq:pullback}, we obtain covers \(\overline{\mathrm{SL}}_2^{(n),\tau}\), \(\overline{\mathrm{SL}}_2^{(2),\phi_m}\) and their fibre product \(\overline{\mathrm{SL}}_2^{(n,2)}\). The nilpotent orbit \(\mathcal{O}_u\) is called \textit{\(\overline{G}^{(n)}\)-raisable} if \(\overline{\mathrm{SL}}_2^{(n,2)}\) admits a finite-dimensional \((n,2)\)-genuine representation.

A criterion completely analogous to Proposition~\ref{prop:quasi} decides raisability \cite[Proposition~2.10]{GLT25}. To avoid confusion we denote the relevant pair of invariants by \((\mathrm{Q}_1^\tau,\mathrm{Q}_2^\tau)\) instead of \((\mathrm{Q}_1,\mathrm{Q}_2)\).

The following proposition provides a large supply of suitable maps \(\tau\); it shows that in most cases one can simply take \(\tau\) to be the one attached to a long root as shown below.

\begin{proposition} \label{prop:simple}
    Assume that \(\mathfrak{g}\) is a simple Lie algebra. If there exists a long root \(\alpha\) of \(\mathfrak{g}_\gamma\) that is also a long root of \(\mathfrak{g}\), then the homomorphism \(\tau\) associated with \(\alpha\) satisfies conditions (C1)--(C3).
\end{proposition}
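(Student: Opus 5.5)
We will verify (C1)--(C3) for the \(\mathfrak{sl}_2\)-triple \(\tau=\mathfrak{sl}_{2,\alpha}=\langle e_\alpha,h_\alpha,e_{-\alpha}\rangle\). The strategy is to compare the \(\tau\)-grading of \(\mathfrak{g}\) with the standard grading attached to a long root of a simple Lie algebra, and then use that \(\tau\) commutes with \(\gamma\).

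\textbf{Step 1: condition (C1).} Since \(\alpha\) is a long root of the simple algebra \(\mathfrak{g}\), for any root \(\beta\neq\pm\alpha\) we have \(\langle\beta,\alpha^\vee\rangle\in\{-1,0,1\}\). This holds because \(|\langle\beta,\alpha^\vee\rangle|=2|\beta|\cos\theta/|\alpha|\le 2\cos\theta\cdot(|\beta|/|\alpha|)\), and \(|\beta|\le|\alpha|\); the value \(\pm2\) occurs only for \(\beta=\pm\alpha\), or for \(\beta\) long with \(\theta=0,\pi\). Hence the \(\operatorname{ad}h_\alpha\)-eigenvalues on \(\mathfrak{g}\) lie in \(\{-2,-1,0,1,2\}\), which is (C1). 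Moreover \(\mathfrak{g}\) decomposes as an \(\mathfrak{sl}_{2,\tau}\)-module into one copy of the adjoint \(V_3\) (namely \(\tau\) itself), copies of \(V_2\), and trivial modules. In particular, the weight-\(2\) space for \(\tau\) is exactly \(\mathfrak{g}_\alpha\), which is one-dimensional.

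\textbf{Step 2: condition (C2).} Because \(\tau\subset\mathfrak{g}_\gamma\), the actions of \(\tau\) and \(\gamma\) commute. Therefore each \(\gamma\)-weight space \(\mathfrak{g}[1]\) is an \(\mathfrak{sl}_{2,\tau}\)-submodule. By Step 1 its irreducible constituents are among \(V_1,V_2,V_3\). The unique \(V_3\) is \(\tau\subset\mathfrak{g}_\gamma\subset\mathfrak{g}[0]\), so it does not meet \(\mathfrak{g}[1]\). Hence \(\mathfrak{g}[1]=\mathfrak{g}[1]^{\mathfrak{sl}_{2,\tau}}\oplus mV_2\), which is (C2).

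\textbf{Step 3: condition (C3).} By Step 1 the \(\tau\)-weight-\(2\) space of \(\mathfrak{g}\) is the line \(\mathfrak{g}_\alpha\). Since \(e_\alpha\in\mathfrak{g}_\gamma\), it has \(\gamma\)-weight \(0\). Thus \(\mathfrak{g}[0,2]=\mathfrak{g}_\alpha\) has dimension \(1\), and \(\mathfrak{g}[2,2]=0\). This gives \(\dim\mathfrak{g}[0,2]=1=1+\dim\mathfrak{g}[2,2]\).

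\textbf{Expected obstacle.} The main care is needed in justifying the weight bound of Step 1 in the setting used here. We must make sure that \(\alpha\), as a long root of \(\mathfrak{g}_\gamma\), is realized as a genuine root of \(\mathfrak{g}\) with respect to a Cartan subalgebra containing both \(h\) and \(h_\alpha\). This is possible because \(h_\alpha\) commutes with \(h\), so a common Cartan subalgebra exists; the hypothesis that \(\alpha\) is a long root of \(\mathfrak{g}\) is then used precisely here. We also need that the \(\tau\) obtained this way is nontrivial, giving (C0), which is immediate.
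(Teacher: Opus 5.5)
Your proof is correct and follows the same route as the paper: the \(\tau\)-weight of \(\mathfrak{g}_\beta\) is \(\langle\beta,\alpha^\vee\rangle\), and these weights are bounded by \(2\). The weight-\(2\) space is exactly \(\mathfrak{g}_\alpha\subset\mathfrak{g}_\gamma\subset\mathfrak{g}[0]\), from which (C1)--(C3) follow.

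Your version differs only by two improvements in detail. First, you replace the paper's case-by-case ``direct verification'' of the bound with the uniform estimate \(|\langle\beta,\alpha^\vee\rangle|\le 2|\beta|/|\alpha|\), where equality holds only for \(\beta=\pm\alpha\). Second, you spell out why (C2) holds: the unique copy of \(V_3\) is \(\tau\) itself, which lies in \(\mathfrak{g}[0]\) and so cannot occur in \(\mathfrak{g}[1]\); the paper compresses this step into ``consequently''.
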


\begin{proof}
    For a root space \(\mathfrak{g}_\beta \subset \mathfrak{g}\) the \(\tau\)-weight equals \(\beta(\alpha^\vee)\). Because every root of \(\mathfrak{g}\) can be expressed explicitly as a linear combination of simple roots, a direct verification shows that the \(\tau\)-weights are indeed bounded by \(2\); this gives (C1) and consequently (C2). Moreover, the subspace of \(\tau\)-weight \(2\) in \(\mathfrak{g}\) is precisely \(\mathfrak{g}_\alpha\), which is certainly contained in \(\mathfrak{g}_\gamma\). Hence \(\dim \mathfrak{g}[0,2] = 1\) and \(\dim \mathfrak{g}[2,2] = 0\), so condition (C3) holds as well.
\end{proof}

\subsection{Prehomogeneous spaces}
Let \(V\) be a rational representation of \(\mathbf{G}\). The pair \((\mathbf{G}, V)\) is called a \textit{prehomogeneous space} if \(V\) contains a Zariski‑open \(\mathbf{G}\)-orbit; points of this orbit are called \textit{generic points}. For $x \in V$, define the \textit{isotropy subgroup} $\mathbf{G}_x \subset \mathbf{G}$ as the stablizer of $x$. When \(x\) is generic, the corresponding isotropy subgroup is referred to as a \textit{generic isotropy subgroup} of \((\mathbf{G},V)\) and is denoted by \(\mathbf{G}_V\).

A useful characterisation of prehomogeneous spaces is given in \cite[\S2.Proposition~2]{SK77}.

\begin{lemma} \label{lem:dim}
    The pair \((\mathbf{G},V)\) is a prehomogeneous space if and only if there exists an element \(x \in V\) such that
    \[
    \dim \mathbf{G}_x = \dim \mathbf{G} - \dim V .
    \]
    Moreover, for such an \(x\) one has \(\mathbf{G}_V = \mathbf{G}_x\).
\end{lemma}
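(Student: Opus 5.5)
The statement to prove is Lemma~\ref{lem:dim}: \((\mathbf{G},V)\) is prehomogeneous if and only if some \(x\in V\) satisfies \(\dim \mathbf{G}_x=\dim\mathbf{G}-\dim V\), and for such \(x\) the group \(\mathbf{G}_x\) is a generic isotropy subgroup. The plan is to derive everything from the orbit--stabilizer dimension formula over the algebraically closed field. For any \(x\in V\), the orbit map \(\mathbf{G}\to V\), \(g\mapsto g\cdot x\), has image \(\mathbf{G}\cdot x\). This orbit is a locally closed, irreducible (after passing to the identity component if needed) subvariety of \(V\). Its dimension satisfies
\[
\dim \mathbf{G}\cdot x=\dim\mathbf{G}-\dim\mathbf{G}_x .
\]
This holds because the fibres of the orbit map are cosets \(g\mathbf{G}_x\), and one applies the fibre dimension theorem.

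For the ``if'' direction, suppose \(\dim\mathbf{G}_x=\dim\mathbf{G}-\dim V\). Then \(\dim \mathbf{G}\cdot x=\dim V\). Since \(V\) is irreducible and an orbit is open in its closure, the closure of \(\mathbf{G}\cdot x\) is a closed subvariety of full dimension, hence equals \(V\). Therefore \(\mathbf{G}\cdot x\) is Zariski-open in \(V\), so \(V\) is prehomogeneous and \(x\) is a generic point. This also proves the last assertion, since \(\mathbf{G}_V\) is by definition the stabilizer of a generic point, so \(\mathbf{G}_V=\mathbf{G}_x\). Strictly, \(\mathbf{G}_V\) is only defined up to conjugacy, but any two generic points lie in the same open orbit, so their stabilizers are conjugate and in particular have equal dimension.

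For the ``only if'' direction, let \(\mathbf{G}\cdot x\) be the open orbit. A nonempty open subset of the irreducible variety \(V\) has dimension \(\dim V\), so the formula gives \(\dim\mathbf{G}_x=\dim\mathbf{G}-\dim V\).

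The only real subtlety is justifying the orbit dimension formula and the openness of an orbit in its closure, possibly with non-connected \(\mathbf{G}\) or in positive characteristic. I would cite the standard results (Borel, Humphreys) rather than reprove them. Connectedness causes no trouble, since \(\mathbf{G}^\circ\)-orbits have the same dimension as \(\mathbf{G}\)-orbits. Alternatively, in characteristic zero one can work infinitesimally: \(\dim\mathfrak{g}_x=\dim\mathbf{G}_x\), and \(x\) is generic exactly when \(\mathfrak{g}\cdot x=V\). This is the form that is convenient in the later computations with the \(\mathrm{L}\)-modules \(\mathfrak{g}[i]\).
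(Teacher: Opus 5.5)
Your proof is correct: the orbit dimension formula $\dim \mathbf{G}\cdot x=\dim\mathbf{G}-\dim\mathbf{G}_x$, combined with openness of orbits in their closures, gives both directions and the final assertion, and your treatment of non-connected $\mathbf{G}$ and of the conjugacy ambiguity in $\mathbf{G}_V$ is sound. The paper does not prove this lemma itself but only cites \cite[\S2, Proposition~2]{SK77}, and your argument is essentially the standard one given there.
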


A prehomogeneous space \((\mathbf{G}, V)\) is called \textit{irreducible} if \(V\) is irreducible as a \(\mathbf{G}\)-module. Sato and Kimura \cite[\S5]{SK77} determined the structure of \(\mathbf{G}_V\) for every irreducible prehomogeneous space. The following proposition handles the situation where \(V\) is reducible; it will be used repeatedly in later computations.

\begin{proposition} \label{prop:restrict}
    Suppose that \((\mathbf{G},V_a)\), \((\mathbf{G},V_a \oplus V_b)\) and \((\mathbf{G}_{V_a},V_b)\) are all prehomogeneous spaces. Then
    \[
    (\mathbf{G}_{V_a})_{V_b} = \mathbf{G}_{V_a \oplus V_b}.
    \]
\end{proposition}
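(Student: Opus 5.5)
The plan is to compare the two isotropy groups through their generic points. Since \((\mathbf{G},V_a \oplus V_b)\) is prehomogeneous, the generic points of \(V_a\oplus V_b\) form a Zariski-open dense subset \(U \subset V_a \oplus V_b\). The generic points of \(V_a\) form an open dense \(U_a\subset V_a\). The preimage of \(U_a\) under the projection \(V_a\oplus V_b\to V_a\) is open and dense, so it meets \(U\). I will therefore pick a point \((x_a,x_b)\in U\) with \(x_a\in U_a\), and fix \(\mathbf{G}_{V_a}=\mathbf{G}_{x_a}\) as the representative of the generic isotropy subgroup of \(V_a\).

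The first step is the elementary identity \(\mathbf{G}_{(x_a,x_b)} = (\mathbf{G}_{x_a})_{x_b}\). This holds because \(g\) fixes \((x_a,x_b)\) exactly when it fixes both components, the action on the direct sum being componentwise. So it only remains to show that \(x_b\) is a generic point for the prehomogeneous space \((\mathbf{G}_{x_a},V_b)\).

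For this I would use Lemma~\ref{lem:dim} together with a dimension count. Since \((x_a,x_b)\) is generic for \(V_a\oplus V_b\), Lemma~\ref{lem:dim} gives
\[
\dim \mathbf{G}_{(x_a,x_b)}=\dim\mathbf{G}-\dim V_a-\dim V_b .
\]
Since \(x_a\) is generic for \(V_a\), it also gives
\[
\dim\mathbf{G}_{x_a}=\dim\mathbf{G}-\dim V_a .
\]
Combining the two,
\[
\dim (\mathbf{G}_{x_a})_{x_b}=\dim\mathbf{G}_{x_a}-\dim V_b .
\]
Applying Lemma~\ref{lem:dim} to \((\mathbf{G}_{x_a},V_b)\), which is prehomogeneous by hypothesis, shows that \(x_b\) is a generic point and that \((\mathbf{G}_{V_a})_{V_b}=(\mathbf{G}_{x_a})_{x_b}=\mathbf{G}_{(x_a,x_b)}=\mathbf{G}_{V_a\oplus V_b}\).

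The main subtlety is that generic isotropy subgroups are only defined up to conjugacy, so the equality should be read in that sense. The choice of a simultaneous generic pair \((x_a,x_b)\) via the density argument is exactly what makes the representatives line up. A second point to check is that Lemma~\ref{lem:dim} applies to the possibly disconnected group \(\mathbf{G}_{x_a}\); this is harmless, since the dimension criterion only involves identity components and orbit dimensions.
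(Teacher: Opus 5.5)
Your proof is correct. It uses the same two ingredients as the paper's: the identity $\mathbf{G}_{(x_a,x_b)}=(\mathbf{G}_{x_a})_{x_b}$ and the dimension criterion of Lemma~\ref{lem:dim}. The only difference is that you run the argument in the opposite direction. The paper first picks $X_a$ generic in $V_a$ and $X_b$ generic for $\mathbf{G}_{X_a}$ on $V_b$, then uses the dimension count to show that $(X_a,X_b)$ is generic in $V_a\oplus V_b$. You instead start from a generic point of the sum, use your density argument to make $x_a$ generic, and show that $x_b$ is generic for $\mathbf{G}_{x_a}$. The paper's direction is closer to how the proposition is applied in Section~\ref{sec:E}, where a point built summand by summand is certified generic for all of $\mathfrak{g}[2]$; as a proof of the stated equality, your version is equally valid.
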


\begin{proof}
    Choose a generic point \(X_a\) for \((\mathbf{G},V_a)\) and a generic point \(X_b\) for \((\mathbf{G}_{V_a},V_b)\). By definition,
    \[
    (\mathbf{G}_{V_a})_{V_b} = (\mathbf{G}_{X_a})_{X_b} = \mathbf{G}_{(X_a,X_b)} .
    \]
    Applying Lemma~\ref{lem:dim} to each of the three spaces, we obtain
    \[
    \dim (\mathbf{G}_{V_a})_{V_b} = \dim \mathbf{G}_{V_a \oplus V_b}.
    \]
    Hence \((X_a,X_b)\) is a generic point of \((\mathbf{G},V_a \oplus V_b)\), and the two isotropy subgroups coincide.
\end{proof}

\section{Explicit calculations for type \(E\)} \label{sec:E}

\subsection{Method of computation and notation}
In this section we determine, for each \(F\)-split nilpotent orbit of a simply-connected almost-simple group of type \(E\), the set of cover degrees \(n\) for which the orbit is quasi-admissible or raisable, respectively. The method of computation, which follows \cite{JLS16,GLT25,Nevins02}, can be summarised as follows.

Recall that \(L\) denotes the derived subgroup of \(\mathbf{G}_0\). It is well-known that \(\mathbf{G}_0\) is a Levi subgroup of \(\mathbf{G}\) so that \(L\) is always simply-connected; then by Lemma~\ref{prop:g0} and Lemma~\ref{lem:gi}, we obtain the structure of \(L\) and \(\mathbf{G}_0\). The work of Jackson and Noël \cite{JN05} provides the structure of \(\mathfrak{g}[2]\) as an \(L\)-module, thereby exhibiting the pair \((\mathbf{G}_0, \mathfrak{g}[2])\) as a prehomogeneous space. From this we compute the generic isotropy subgroup \((\mathbf{G}_0)_{\mathfrak{g}[2]}\) by \cite{SK77} and Proposition~\ref{prop:restrict}. Because the set of nil‑positive elements for the semisimple part of the \(\mathfrak{sl}_2\)-triple \(\gamma\) is Zariski‑open in \(\mathfrak{g}[2]\), associated with the definition of prehomogeneous space we have an isomorphism \((\mathbf{G}_0)_{\mathfrak{g}[2]} \cong \mathbf{G}_\gamma\), and the embeddings of \((\mathbf{G}_0)_{\mathfrak{g}[2]}\) and of \(\mathbf{G}_\gamma\) into \(\mathbf{G}_0\) coincide up to conjugation by \(\mathbf{G}_0\). Again from \cite{JN05} one obtains the decomposition of \(\mathfrak{g}[1]\) as an \(L\)-module. Restricting this decomposition to the subalgebra \(\mathfrak{sl}_{2,\xi}\) (attached to a long root of \(\mathbf{G}_{\gamma,\mathrm{der}}\)) yields the data needed to compute the invariants \((\mathrm{Q}_1,\mathrm{Q}_2)\); these invariants decide quasi‑admissibility via Proposition~\ref{prop:quasi}.

For raisability, the structure of \(\mathbf{G}_{\gamma}\) gives the Lie algebra \(\mathfrak{g}_{\gamma}\). This information determines all possible choices of an auxiliary homomorphism \(\tau\). Then we check the conditions (C1)--(C3) case by case. In practice, a suitable \(\tau\) can often be chosen directly using Proposition~\ref{prop:simple}.

The results are collected in Tables~\ref{tab:e6}, \ref{tab:e7}, \ref{tab:e8} and~\ref{tab:e8c}. Firstly we discuss two extreme cases.

\begin{itemize}
    \item \textbf{Zero orbit.} For the zero orbit of a group \(\mathbf{G}\) of type \(E\) the orbit is even and \(\mathbf{G}_{\gamma,\mathrm{der}}=\mathbf{G}\). One finds \((\mathrm{Q}_1,\mathrm{Q}_2)=(1,0)\); consequently the orbit is quasi‑admissible precisely for the trivial cover \(n=1\). Concerning raisability, Proposition~\ref{prop:simple} gives \((\mathrm{Q}_1^\tau,\mathrm{Q}_2^\tau)=(1,0)\), so the zero orbit is raisable for every \(n\ge 2\).
    \item \textbf{Distinguished orbits.} For a distinguished orbit one has \(\mathbf{G}_{\gamma,\mathrm{der}}=\mathbf{1}\); hence the raisability method does not apply. The covering splits in this situation, and therefore the orbit is quasi‑admissible for every \(n\).
\end{itemize}

\subsubsection*{Notational conventions}
We write \(\mathrm{V}_j\) for the \(j\)-dimensional irreducible representation of \(\mathfrak{sl}_2\). When a almost-simple Lie group \(\mathbf{G}\) occurs as a standard representation, we denote the representation simply by the same symbol \(\mathbf{G}\) (for example, the standard representation of \(\mathrm{Spin}_N\) is written \(\mathrm{Spin}_N\)) and the corresponding dual representation by \(\mathbf{G}^*\). The fundamental weight corresponding to the standard representation of a almost-simple group is denoted \(\Lambda_1\). The two half‑spin representations of \(\mathrm{Spin}_{2n}\) are denoted \(\mathrm{Spin}_{2n}^\prime\) and \(\mathrm{Spin}_{2n}^{\prime\prime}\); the trivial representation is written \(\mathrm{F}\); \(\mathrm{E}_6\) means either of the \(27\)-dimensional \(\mathrm{E}_6\)-modules; and \(\mathrm{E}_7\) means the \(56\)-dimensional \(\mathrm{E}_7\)-module. All the exceptional Lie groups are assumed to be simply-connected.

\subsection{Type $E_6$}
Consider the Dynkin diagram below:

\begin{center}
\newcommand{\al}[1]{\alpha_{#1}}
\dynkin[label,label macro/.code=\al{#1},edge length=1cm]{E}{oooooo}
\end{center}

\subsubsection{\dynkin[edge length=.20cm,labels={0,1,0,0,0,0},label]{E}{oooooo} $A_1$, $\mathrm{L}=\mathrm{SL}_6$.}
\begin{align*}
    \mathfrak{g}[2]&=\mathrm{F} \\
    \mathfrak{g}[1]&=\bigwedge^3\mathrm{SL}_6
\end{align*}
As the trivial summand always reduces the dimension of the centre by one, with Proposition~\ref{prop:restrict}, it suffices to analyse the remaining summands; we shall henceforth omit further mention of such trivial contributions. We have $\mathbf{G}_{\gamma,\mathrm{der}}=\mathrm{SL}_{6}$, associated with $\Delta-\{\alpha_2\}$. As $\mathfrak{sl}_{2,\xi}$-mod, $\mathfrak{g}[1]=8\mathrm{V}_1+6\mathrm{V}_2$. Then we have $(\mathrm{Q}_1,\mathrm{Q}_2)=(\mathrm{Q}_1^\tau,\mathrm{Q}_2^\tau)=(1,6)$. Thus, the orbit is quasi-admissible for $n=1$, raisable if $n \geq 2$.

\subsubsection{\dynkin[edge length=.20cm,labels={1,0,0,0,0,1},label]{E}{oooooo} $2A_1$, $\mathrm{L}=\mathrm{Spin}_8$. } \label{SO1}
\begin{align*}
    \mathfrak{g}[2]&=\mathrm{Spin}_8 \\
    \mathfrak{g}[1]&=\mathrm{Spin}_8^\prime \oplus \mathrm{Spin}_8^{\prime\prime}
\end{align*}
We have $\mathbf{G}_{\gamma,\mathrm{der}}=\mathrm{Spin}_{7} \hookrightarrow \mathrm{Spin}_8$ by $\Lambda_1$. As $\mathfrak{sl}_{2,\xi}$-mod, $\mathfrak{g}[1]=8\mathrm{V}_1+4\mathrm{V}_2$. Then we have $(\mathrm{Q}_1,\mathrm{Q}_2)=(\mathrm{Q}_1^\tau,\mathrm{Q}_2^\tau)=(1,4)$. Thus, the orbit is quasi-admissible for $n=1$, raisable if $n \geq 2$.

\subsubsection{\dynkin[edge length=.20cm,labels={0,0,0,1,0,0},label]{E}{oooooo} $3A_1$, $\mathrm{L}=\mathrm{SL}_3^1 \times \mathrm{SL}_2 \times \mathrm{SL}_3^2$. } \label{dA2}
\begin{align*}
    \mathfrak{g}[2]&=\mathrm{SL}_3^1 \otimes \mathrm{SL}_3^2\\
    \mathfrak{g}[1]&=(\mathrm{SL}_3^1)^* \otimes \mathrm{SL}_2 \otimes (\mathrm{SL}_3^2)^*
\end{align*}
We have $\mathbf{G}_{\gamma,\mathrm{der}}=\mathrm{SL}_{2,\alpha_2}^a \times \Delta(\mathrm{SL}_{3})^b$, where $\Delta(\mathrm{SL}_{3})$ is diagonally embedded into $\mathrm{SL}_3^1 \times \mathrm{SL}_3^2=\mathrm{SL}_{3,\alpha_1,\alpha_3} \times \mathrm{SL}_{3,\alpha_5,\alpha_6}$. As $\mathfrak{sl}_{2,\xi_a}$-mod, $\mathfrak{g}[1]=9\mathrm{V}_2$. As $\mathfrak{sl}_{2,\xi_b}$-mod, $\mathfrak{g}[1]=4\mathrm{V}_1 + 4\mathrm{V}_2 + 2\mathrm{V}_3$. Then we have $(\mathrm{Q}_1^a,\mathrm{Q}_2^a)=(\mathrm{Q}_1^\tau,\mathrm{Q}_2^\tau)=(1,9),(\mathrm{Q}_1^b,\mathrm{Q}_2^b)=(2,12)$. Thus, the orbit is quasi-admissible for $n=2$, raisable if $n \neq 2$.

\subsubsection{\dynkin[edge length=.20cm,labels={0,2,0,0,0,0},label]{E}{oooooo} $A_2$, $\mathrm{L}=\mathrm{SL}_6$. } \label{A53}
\begin{align*}
    \mathfrak{g}[2]&=\bigwedge^3\mathrm{SL}_6
\end{align*}
We have $\mathbf{G}_{\gamma,\mathrm{der}}=\mathrm{SL}_{3,\alpha_1,\alpha_3}^a \times \mathrm{SL}_{3,\alpha_5,\alpha_6}^b$. Then we have $(\mathrm{Q}_1^a,\mathrm{Q}_2^a)=(\mathrm{Q}_1^b,\mathrm{Q}_2^b)=(\mathrm{Q}_1^\tau,\mathrm{Q}_2^\tau)=(1,0)$. Thus, the orbit is quasi-admissible for $n=1$, raisable if $n \geq 2$.

\subsubsection{\dynkin[edge length=.20cm,labels={1,1,0,0,0,1},label]{E}{oooooo} $A_2+A_1$, $\mathrm{L}=\mathrm{SL}_{4}$. } \label{A3*}
\begin{align*}
    \mathfrak{g}[2] &= \mathrm{F} \oplus \mathrm{SL}_4\oplus \mathrm{SL}_4^* \\
    \mathfrak{g}[1] &= \mathrm{SL}_4\oplus \bigwedge^2\mathrm{SL}_4\oplus \mathrm{SL}_4^*
\end{align*}
Where $\mathrm{SL}_4^*=:V$ denotes the dual of the standard representation of $\mathrm{SL}_4$. The generic isotropy subgroup with respect to $\mathrm{F} \oplus \mathrm{SL}_4$ of $\mathbf{G}_0$ with generic point $(1,(1,0,0,0))$ is $(\mathrm{GL}_3 \times \mathrm{GL}_1) \rtimes (G_a)^3$, acts on $V$ by $g' \cdot v :=(^{\top}(\begin{smallmatrix} x & y \\ 0 & g \end{smallmatrix})^{-1})v$, where $x \in \mathrm{GL}_1, y=(y_1,y_2,y_3) \in (G_a)^{1 \times 3}, g \in \mathrm{GL}_3, v \in V$. We can take a suitable basis $\{\beta_1,\beta_2,\beta_3,\beta_4 \}$ of $V$ such that $g'(\beta_1)=y_1\beta_1+y_2\beta_2+y_3\beta_3+x\beta_4$. Hence $\beta_1$ is a generic point in $V$, which gives generic isotropy subgroup $\mathrm{GL}_3$. And clearly $(1,(1,0,0,0),\beta_1)$ is also a generic point in $\mathfrak{g}[2]$. Thus, we have $\mathbf{G}_{\gamma,\mathrm{der}}=\mathrm{SL}_{3,\alpha_3,\alpha_4}$. As $\mathfrak{sl}_{2,\xi}$-mod, $\mathfrak{g}[1]=6V_1+4V_2$. Then we have $(\mathrm{Q}_1,\mathrm{Q}_2)=(\mathrm{Q}_1^\tau,\mathrm{Q}_2^\tau)=(1,4)$. Thus, the orbit is quasi-admissible for $n=1$, raisable if $n \geq 2$.

\subsubsection{\dynkin[edge length=.20cm,labels={2,0,0,0,0,2},label]{E}{oooooo} $2A_2$, $\mathrm{L}=\mathrm{Spin}_8$. } \label{SpinSO8}
\begin{align*}
    \mathfrak{g}[2] &= \mathrm{Spin}_8 \oplus \mathrm{Spin}_8^\prime 
\end{align*}
Let $V_a=\mathrm{Spin}_8$, we obtain prehomogeneous space $(\mathbf{G}_{V_a}, V_b)=(\mathrm{Spin}_7 \times \mathrm{GL}_1,\; \mathrm{Spin}_7^\prime)$, which has been computed in \cite{SK77}. By Proposition~\ref{prop:restrict}, we have $\mathbf{G}_{\gamma,\mathrm{der}}=\mathrm{G}_2$, whose long root is a simple root of $\mathrm{E}_6$. Then we have $(\mathrm{Q}_1,\mathrm{Q}_2)=(\mathrm{Q}_1^\tau,\mathrm{Q}_2^\tau)=(1,0)$. Thus, the orbit is quasi-admissible for $n=1$, raisable if $n \geq 2$.

\subsubsection{\dynkin[edge length=.20cm,labels={0,0,1,0,1,0},label]{E}{oooooo} $A_2+2A_1$, $\mathrm{L}=\mathrm{SL}_{2}^1 \times \mathrm{SL}_3 \times \mathrm{SL}_{2}^2$. } \label{SO31}
\begin{align*}
    \mathfrak{g}[2] &= \mathrm{SL}_{2}^1 \otimes \mathrm{SL}_3 \otimes \mathrm{SL}_{2}^2\\
    \mathfrak{g}[1] &= (\mathrm{SL}_{2}^1 \otimes\mathrm{SL}_3^*) \oplus (\mathrm{SL}_{2}^2 \otimes\mathrm{SL}_3^*)
\end{align*}
We have $\mathbf{G}_{\gamma,\mathrm{der}}=\mathrm{SL}_{2} \hookrightarrow \mathrm{SL}_{2}^1 \times \mathrm{SL}_3 \times \mathrm{SL}_{2}^2$, the embedding is given by $\Lambda_1 \otimes 2\Lambda_1 \otimes \Lambda_1$. Hence $\mathrm{Q}_1= \mathrm{Q}_{E_6}(\alpha_1^\vee + 2\alpha_2^\vee + \alpha_6^\vee )= 6$. As $\mathfrak{sl}_{2,\xi}$-mod, $\mathfrak{g}[1]=2V_2+2V_4$. Then we have $(\mathrm{Q}_1,\mathrm{Q}_2)=(6,22)$. Thus, the orbit is quasi-admissible for $n=1,2,3,6$, the only choice of $\tau$ fails to satisfy the condition (C1).

\subsubsection{\dynkin[edge length=.20cm,labels={1,2,0,0,0,1},label]{E}{oooooo} $A_3$, $\mathrm{L}=\mathrm{SL}_{4}$. } \label{Spin6}
\begin{align*}
    \mathfrak{g}[2] &= \mathrm{F} \oplus \bigwedge^2\mathrm{SL}_4 \\
    \mathfrak{g}[1] &= \mathrm{SL}_4 \oplus \mathrm{SL}_4^*
\end{align*}
We have $\mathbf{G}_{\gamma,\mathrm{der}}=\mathrm{Sp}_{4}$. As $\mathfrak{sl}_{2,\xi}$-mod, $\mathfrak{g}[1]= 4V_1 + 2V_2$. Then we have $(\mathrm{Q}_1,\mathrm{Q}_2)=(\mathrm{Q}_1^\tau,\mathrm{Q}_2^\tau)=(1,2)$. Thus, the orbit is quasi-admissible for $n=1$, raisable if $n \geq 2$.

\subsubsection{\dynkin[edge length=.20cm,labels={1,0,0,1,0,1},label]{E}{oooooo} $2A_2+A_1$, $\mathrm{L}=\mathrm{SL}_{2}^1 \times \mathrm{SL}_{2}^2 \times \mathrm{SL}_{2}^3$. } \label{dA1}
\begin{align*}
    \mathfrak{g}[2] &= (\mathrm{SL}_{2}^1 \otimes \mathrm{SL}_{2}^2) \oplus \mathrm{F} \oplus (\mathrm{SL}_{2}^1 \otimes \mathrm{SL}_{2}^3) \\
    \mathfrak{g}[1] &= \mathrm{SL}_{2}^2 \oplus \mathrm{SL}_{2}^3 \oplus (\mathrm{SL}_{2}^1 \otimes \mathrm{SL}_{2}^2 \otimes \mathrm{SL}_{2}^3)
\end{align*}
Let \(V_a = \mathrm{GL}_{2}^1 \otimes \mathrm{SL}_{2}^2\) and let \(V_b\) be the remaining summand in \(\mathfrak{g}[2]\). Then the generic isotropy subgroup for \(V_a\) is \((\mathbf{G}_0)_{V_a} = \Delta(\mathrm{SL}_{2}) \times \mathrm{GL}_{2}^3\), where \(\Delta(\mathrm{SL}_{2})\) denotes the diagonal copy inside \(\mathrm{SL}_{2}^1 \times \mathrm{SL}_{2}^2\). The pair \(\bigl((\mathbf{G}_0)_{V_a}, V_b\bigr) = \bigl(\Delta(\mathrm{SL}_2) \times \mathrm{GL}_2^3,\; \mathrm{SL}_{2} \otimes \mathrm{GL}_{2}^3\bigr)\) is a prehomogeneous space. Applying Proposition~\ref{prop:restrict} yields \(\mathbf{G}_{\gamma,\mathrm{der}} = \Delta(\mathrm{SL}_{2})\), embedded diagonally into \(\mathrm{SL}_{2,\alpha_2} \times \mathrm{SL}_{2,\alpha_3} \times \mathrm{SL}_{2,\alpha_5}\). As an \(\mathfrak{sl}_{2,\xi}\)-module, \(\mathfrak{g}[1] = 4V_2 \oplus V_4\); hence \((\mathrm{Q}_1,\mathrm{Q}_2) = (3,14)\). Consequently the orbit is quasi‑admissible for \(n = 1, 3\). No admissible homomorphism \(\tau\) satisfying conditions (C0)–(C3) exists, so the raisability test is not applicable.

\subsubsection{\dynkin[edge length=.20cm,labels={0,1,1,0,1,0},label]{E}{oooooo} $A_3+A_1$, $\mathrm{L}=\mathrm{SL}_{2}^1 \times \mathrm{SL}_{2}^2 \times \mathrm{SL}_{2}^3$. } \label{A1}
\begin{align*}
    \mathfrak{g}[2] &= \mathrm{SL}_{2}^1 \oplus \mathrm{SL}_{2}^3 \oplus (\mathrm{SL}_{2}^1 \otimes \mathrm{SL}_{2}^3)\\
    \mathfrak{g}[1] &= \mathrm{SL}_{2}^2 \oplus (\mathrm{SL}_{2}^1 \otimes \mathrm{SL}_{2}^2) \oplus( \mathrm{SL}_{2}^2 \otimes \mathrm{SL}_{2}^3)
\end{align*}
By computing dimension, $\text{dim}\mathbf{G}_{\gamma}-\text{dim}\mathrm{SL}_2^2=\text{dim}\mathbf{G}_0-\text{dim}\mathfrak{g}[2]-3=12-8-3=1$. Thus, we have $\mathbf{G}_{\gamma,\mathrm{der}}=\mathrm{SL}_{2}^2=\mathrm{SL}_{2,\alpha_4}$. As $\mathfrak{sl}_{2,\xi}$-mod, $\mathfrak{g}[1]=5V_2$. Then we have $(\mathrm{Q}_1,\mathrm{Q}_2)=(\mathrm{Q}_1^\tau,\mathrm{Q}_2^\tau)=(1,5)$. Thus, the orbit is quasi-admissible for $n=2$, raisable if $n \neq 2$.

\subsubsection{\dynkin[edge length=.20cm,labels={0,0,0,2,0,0},label]{E}{oooooo} $D_4(a_1)$, $\mathrm{L}=\mathrm{SL}_3 \times \mathrm{SL}_2 \times \mathrm{SL}_3$. } \label{323}
\begin{align*}
    \mathfrak{g}[2] &= \mathrm{SL}_3 \otimes \mathrm{SL}_2 \otimes \mathrm{SL}_3 
\end{align*}
We have $\mathbf{G}_{\gamma,\mathrm{der}}=1$, the same as in the case of distinguished orbits. Thus, the orbit is quasi-admissible for all $n$, the method for raisability does not work.

\subsubsection{\dynkin[edge length=.20cm,labels={2,2,0,0,0,2},label]{E}{oooooo} $A_4$, $\mathrm{L}=\mathrm{SL}_{4}$. }
\begin{align*}
    \mathfrak{g}[2] &= \mathrm{SL}_4 \oplus \mathrm{SL}_4^* \oplus \bigwedge^2\mathrm{SL}_4
\end{align*}
Let \(V_a = \mathrm{SL}_4 \oplus \mathrm{SL}_4^*\) and let \(V_b\) be the remaining summand in \(\mathfrak{g}[2]\). The same as \ref{A3*}, $\mathrm{SL}_4 \oplus \mathrm{SL}_4^*$ gives the isotropy subgroup $(\mathbf{G}_0)_{V_a}=\mathrm{SL}_3$. Then we obtain prehomogeneous space \(\bigl((\mathbf{G}_0)_{V_a}, V_b\bigr) = (\mathrm{SL}_3 \times 2\mathrm{GL}_1,\; \mathrm{SL}_{3} \otimes \mathrm{SL}_{3}^*)\). Similarly to the calculation in \ref{A3*} and by Proposition~\ref{prop:restrict}, we have $\mathbf{G}_{\gamma,\mathrm{der}}=\mathrm{SL}_{2} \hookrightarrow \mathrm{SL}_3 \hookrightarrow \mathrm{SL}_4$, associated with a simple root of $\mathrm{E}_6$. As $\mathfrak{sl}_{2,\xi}$-mod. Then we have $(\mathrm{Q}_1,\mathrm{Q}_2)=(\mathrm{Q}_1^\tau,\mathrm{Q}_2^\tau)=(1,0)$. Thus, the orbit is quasi-admissible for $n=1$, raisable if $n \geq 2$.

\subsubsection{\dynkin[edge length=.20cm,labels={0,2,0,2,0,0},label]{E}{oooooo} $D_4$, $\mathrm{L}=\mathrm{SL}_{3}^1 \times \mathrm{SL}_{3}^2$. }
\begin{align*}
    \mathfrak{g}[2] &= \mathrm{F} \oplus (\mathrm{SL}_{3}^1 \otimes \mathrm{SL}_{3}^2)
\end{align*}
The same as \ref{dA2}, we have $\mathbf{G}_{\gamma,\mathrm{der}}=\Delta(\mathrm{SL}_{3})$. Take $\tau$ to be associated with one of the simple roots of $\mathfrak{g}_\gamma$, the conditions $\text{(C1)-(C3)}$ are satisfied. Then we have $(\mathrm{Q}_1,\mathrm{Q}_2)=(\mathrm{Q}_1^\tau,\mathrm{Q}_2^\tau)=(2,0)$. Thus, the orbit is quasi-admissible for $n=1,2$, raisable if $n \geq 3$.

\subsubsection{\dynkin[edge length=.20cm,labels={1,1,1,0,1,1},label]{E}{oooooo} $A_4+A_1$, $\mathrm{L}=\mathrm{SL}_2$. }
\begin{align*}
    \mathfrak{g}[2] &= 2\mathrm{F} \oplus 3 \mathrm{SL}_2 
\end{align*}
By computing dimension, we have $\text{dim}\mathbf{G}_{\gamma}=\text{dim}\mathbf{G}_0-\text{dim}\mathfrak{g}[2]=8-8=0$. Thus, we have $\mathbf{G}_{\gamma,\mathrm{der}}=1$.

\begin{table}[H]
\centering
\resizebox{\textwidth}{!}{%  % 缩放表格到页面宽度
\begin{tabular}{cccccc}
\hline
$\mathcal{O}$ & special/even? & $\mathbf{G}_{\gamma,\text{der}}$ & $(Q_1, Q_2)$ & quasi-admissible, if and only if & raisable if \\
\hline
$\{0\}$ & yes/yes & $\mathrm{E}_{6}$ & $(1,0)$ & $n=1$ & $n \geq 2$ \\
$A_1$ & yes/no & $\mathrm{SL}_{6}$ & $(1,6)$ & $n=1$ & $n \geq 2$ \\
$2A_1$ & yes/no & $\mathrm{Spin}_{7}$ & $(1,4)$ & $n=1$ & $n \geq 2$ \\
$3A_1$ & no/no & $\mathrm{SL}_{2,\alpha_2} \times \mathrm{SL}_{3}$ & $(1,9),(2,12)$ & $n=2$ & $n \neq 2$ \\
$A_2$ & yes/yes & $\mathrm{SL}_{3} \times \mathrm{SL}_{3}$ & $(1, 0),(1,0)$ & $n=1$ & $n \geq 2$ \\
$A_2+A_1$ & yes/no & $\mathrm{SL}_{3}$ & $(1,4)$ & $n=1$ & $n \geq 2$ \\
$2A_2$ & yes/yes & $\mathrm{G}_2$ & $(1,0)$ &$n=1$ & $n \geq 2$ \\
$A_2+2A_1$ & yes/no & $\mathrm{SL}_{2}$ & $(6,22)$ & $n=1,2,3,6$ & n.a. \\
$A_3$ & yes/no & $\mathrm{Sp}_{4}$ & $(1,2)$ & $n=1$ & $n \geq 2$ \\
$2A_2+A_1$ & no/no & $\mathrm{SL}_{2}$ & $(3,14)$ & $n=1,3$ & n.a.  \\
$A_3+A_1$ & no/no & $\mathrm{SL}_{2,\alpha_4}$ & $(1,5)$ & $n=2$ & $n \neq 2$ \\
$D_4(a_1)$ & yes/yes & $\mathrm{1}$ & n.a. & all n & n.a. \\
$A_4$ & yes/yes & $\mathrm{SL}_{2}$ & $(1,0)$ & $n=1$ & $n \geq 2$ \\
$D_4$ & yes/yes & $\mathrm{SL}_{3}$ & $(2,0)$ & $n=1,2$ & $n \geq 3$ \\
$A_4+A_1$ & yes/no & $\mathrm{1}$ & n.a. & all n & n.a. \\
$A_5$ & no/no & $\mathrm{SL}_{2,\alpha_4}$ & $(1,3)$ & $n=2$ & $n \neq 2$ \\
$D_5(a_1)$ & yes/no & $\mathrm{1}$ & n.a. & all n & n.a. \\
$E_6(a_3)$ & yes/yes & $\mathrm{1}$ & n.a. & all n & n.a. \\
$D_5$ & yes/yes & $\mathrm{1}$ & n.a. & all n & n.a. \\
$E_6(a_1)$ & yes/yes & $\mathrm{1}$ & n.a. & all n & n.a. \\
$E_6$ & yes/yes & $\mathrm{1}$ & n.a. & all n & n.a. \\
\hline
\end{tabular}
}
\caption{Nilpotent orbits for \( \overline{E_6}^{(n)} \)} \label{tab:e6}
\end{table}

\subsubsection{\dynkin[edge length=.20cm,labels={2,1,1,0,1,2},label]{E}{oooooo} $A_5$, $\mathrm{L}=\mathrm{SL}_2$. }
\begin{align*}
    \mathfrak{g}[2] &= 5\mathrm{F} \\
    \mathfrak{g}[1] &= 3 \mathrm{SL}_2
\end{align*}
We have $\mathbf{G}_{\gamma,\mathrm{der}}=\mathrm{SL}_{2,\alpha_4}$. As $\mathfrak{sl}_{2,\xi}$-mod, $\mathfrak{g}[1]=3V_2$. Then we have $(\mathrm{Q}_1,\mathrm{Q}_2)=(\mathrm{Q}_1^\tau,\mathrm{Q}_2^\tau)=(1,3)$. Thus, the orbit is quasi-admissible for $n=2$, raisable if $n \neq 2$.

\subsubsection{\dynkin[edge length=.20cm,labels={1,2,1,0,1,1},label]{E}{oooooo} $D_5(a_1)$, $\mathrm{L}=\mathrm{SL}_2$. }
\begin{align*}
    \mathfrak{g}[2] &=  3 \mathrm{SL}_2 \oplus \mathrm{F}
\end{align*}
By computing dimension, we have $\mathbf{G}_{\gamma,\mathrm{der}}=1$.

\subsubsection{ \dynkin[edge length=.20cm,labels={2,2,0,2,0,2},label]{E}{oooooo} $D_5$, $\mathrm{L}=\mathrm{SL}_2^1 \times \mathrm{SL}_2^2$. }
\begin{align*}
    \mathfrak{g}[2] &= \mathrm{F} \oplus \mathrm{SL}_2^1 \oplus \mathrm{SL}_2^2 \oplus (\mathrm{SL}_2^1 \otimes \mathrm{SL}_2^2)
\end{align*}
By computing dimension, we have $\mathbf{G}_{\gamma,\mathrm{der}}=1$.

\subsection{Type $E_7$}
Consider the Dynkin diagram below:

\begin{center}
\newcommand{\al}[1]{\alpha_{#1}}
\dynkin[label,label macro/.code=\al{#1},edge length=1cm]{E}{ooooooo}
\end{center}

\subsubsection{\dynkin[edge length=.20cm,labels={1,0,0,0,0,0,0},label]{E}{ooooooo} $A_1$, $\mathrm{L}=\mathrm{Spin}_{12}$. }
\begin{align*}
    \mathfrak{g}[2] &= \mathrm{F} \\
    \mathfrak{g}[1] &= \mathrm{Spin}_{12}^\prime
\end{align*}
We have $\mathbf{G}_{\gamma,\mathrm{der}}=\mathrm{Spin}_{12}$, associated with $\Delta-\{\alpha_1\}$. As $\mathfrak{sl}_{2,\xi}$-mod, $\mathfrak{g}[1]=16V_1 +8V_2$. Then we have $(\mathrm{Q}_1,\mathrm{Q}_2)=(\mathrm{Q}_1^\tau,\mathrm{Q}_2^\tau)=(1,8)$. Thus, the orbit is quasi-admissible for $n=1$, raisable if $n \geq 2$.

\subsubsection{\dynkin[edge length=.20cm,labels={0,0,0,0,0,1,0},label]{E}{ooooooo} $2A_1$, $\mathrm{L}=\mathrm{SL}_{2} \times \mathrm{Spin}_{10}$. }
\begin{align*}
    \mathfrak{g}[2] &= \mathrm{Spin}_{10} \\
    \mathfrak{g}[1] &= \mathrm{SL}_2 \otimes \mathrm{Spin}_{10}^\prime
\end{align*}
We have $\mathbf{G}_{\gamma,\mathrm{der}}=\mathrm{SL}_{2,\alpha_7}^a \times \mathrm{Spin}_9^b$, where $\mathrm{Spin}_9^b \hookrightarrow \mathrm{Spin}_{10}$ by $\Lambda_1$. As $\mathfrak{sl}_{2,\xi_a}$-mod, $\mathfrak{g}[1]=16V_2$. As $\mathfrak{sl}_{2,\xi_b}$-mod, $\mathfrak{g}[1]=16V_1 + 8V_2$. Then we have $(\mathrm{Q}_1^a,\mathrm{Q}_2^a)=(\mathrm{Q}_1^\tau,\mathrm{Q}_2^\tau)=(1,16),(\mathrm{Q}_1^b,\mathrm{Q}_2^b)=(1,8)$. Thus, the orbit is quasi-admissible for $n=1$, raisable if $n \geq 2$.

\subsubsection{\dynkin[edge length=.20cm,labels={0,0,0,0,0,0,2},label]{E}{ooooooo} $(3A_1)^{\prime\prime}$, $\mathrm{L}=\mathrm{E}_{6}$. } \label{E6}
\begin{align*}
    \mathfrak{g}[2] &= \mathrm{E}_6 
\end{align*}
We have $\mathbf{G}_{\gamma,\mathrm{der}}=\mathrm{F}_{4}$, one of its long roots is $\alpha_2$. Then we have $(\mathrm{Q}_1,\mathrm{Q}_2)=(\mathrm{Q}_1^\tau,\mathrm{Q}_2^\tau)=(1,0)$. Thus, the orbit is quasi-admissible for $n=1$, raisable if $n \geq 2$.

\subsubsection{\dynkin[edge length=.20cm,labels={0,0,1,0,0,0,0},label]{E}{ooooooo} $(3A_1)^\prime$, $\mathrm{L}=\mathrm{SL}_{2} \times \mathrm{SL}_6$. } \label{A52} 
\begin{align*}
    \mathfrak{g}[2] &= \bigwedge^2\mathrm{SL}_6 \\
    \mathfrak{g}[1] &= \mathrm{SL}_{2} \otimes \bigwedge^4\mathrm{SL}_6
\end{align*}
We have $\mathbf{G}_{\gamma,\mathrm{der}}=\mathrm{SL}_{2,\alpha_1}^a \times \mathrm{Sp}_6^b$, where $\mathrm{Sp}_6 \hookrightarrow \mathrm{SL}_6$ is the nature inclusion. As $\mathfrak{sl}_{2,\xi_a}$-mod, $\mathfrak{g}[1]=15V_2$. As $\mathfrak{sl}_{2,\xi_b}$-mod, $\mathfrak{g}[1]=14V_1 + 8V_2$. Then we have $(\mathrm{Q}_1^a,\mathrm{Q}_2^a)=(\mathrm{Q}_1^{\tau_1},\mathrm{Q}_2^{\tau_1})=(1,15),(\mathrm{Q}_1^b,\mathrm{Q}_2^b)=(\mathrm{Q}_1^{\tau_2},\mathrm{Q}_2^{\tau_2})=(1,8)$. Thus, the orbit is not quasi-admissible for any $n$, raisable for all $n$.

\subsubsection{\dynkin[edge length=.20cm,labels={2,0,0,0,0,0,0},label]{E}{ooooooo} $A_2$, $\mathrm{L}=\mathrm{Spin}_{12}$. } \label{Spin12}
\begin{align*}
    \mathfrak{g}[2] &= \mathrm{Spin}_{12}^\prime
\end{align*}
We have $\mathbf{G}_{\gamma,\mathrm{der}}=\mathrm{SL}_{6} \hookrightarrow \mathrm{Spin}_{12}$. The embedding is given by $\Lambda_1 \oplus \Lambda_1$. Then we have $(\mathrm{Q}_1,\mathrm{Q}_2)=(\mathrm{Q}_1^\tau,\mathrm{Q}_2^\tau)=(1,0)$. Thus, the orbit is quasi-admissible for $n=1$, raisable if $n \geq 2$.

\subsubsection{\dynkin[edge length=.20cm,labels={0,1,0,0,0,0,1},label]{E}{ooooooo} $4A_1$, $\mathrm{L}=\mathrm{SL}_6$. }
\begin{align*}
    \mathfrak{g}[2] &= \mathrm{F} \oplus \bigwedge^2\mathrm{SL}_6 \\
    \mathfrak{g}[1] &= \mathrm{SL}_6^* \oplus \bigwedge^3\mathrm{SL}_6
\end{align*}
The same as \ref{A52}, we have $\mathbf{G}_{\gamma,\mathrm{der}}=\mathrm{Sp}_6$. As $\mathfrak{sl}_{2,\xi}$-mod, $\mathfrak{g}[1]=12V_1 +7V_2$. Then we have $(\mathrm{Q}_1,\mathrm{Q}_2)=(\mathrm{Q}_1^\tau,\mathrm{Q}_2^\tau)=(1,7)$. Thus, the orbit is quasi-admissible for $n=2$, raisable if $n \neq 2$.

\subsubsection{\dynkin[edge length=.20cm,labels={1,0,0,0,0,1,0},label]{E}{ooooooo} $A_2+A_1$, $\mathrm{L}=\mathrm{Spin}_8 \times \mathrm{SL}_{2}$. } \label{SO2}
\begin{align*}
    \mathfrak{g}[2] &= \mathrm{F} \oplus (\mathrm{Spin}_8 \otimes \mathrm{SL}_{2}) \\
    \mathfrak{g}[1] &= \mathrm{Spin}_8^\prime \oplus (\mathrm{Spin}_8^{\prime\prime} \otimes \mathrm{SL}_{2})
\end{align*}
We have $\mathbf{G}_{\gamma,\mathrm{der}}=\mathrm{Spin}_{6} \hookrightarrow \mathrm{Spin}_8$ by standard representation. As $\mathfrak{sl}_{2,\xi}$-mod, $\mathfrak{g}[1]=12V_1 + 6V_2$. Then we have $(\mathrm{Q}_1,\mathrm{Q}_2)=(\mathrm{Q}_1^\tau,\mathrm{Q}_2^\tau)=(1,6)$. Thus, the orbit is quasi-admissible for $n=1$, raisable if $n \geq 2$.

\subsubsection{\dynkin[edge length=.20cm,labels={0,0,0,1,0,0,0},label]{E}{ooooooo} $A_2+2A_1$, $\mathrm{L}=\mathrm{SL}_2 \times \mathrm{SL}_3 \times \mathrm{SL}_4$. } \label{SO3}
\begin{align*}
    \mathfrak{g}[2] &= \mathrm{SL}_3 \otimes \bigwedge^2\mathrm{SL}_4 \\
    \mathfrak{g}[1] &= \mathrm{SL}_2 \otimes \mathrm{SL}_3^* \otimes \mathrm{SL}_4
\end{align*}
We have $\mathbf{G}_{\gamma,\mathrm{der}}=\mathrm{SL}_2^a \times \mathrm{SL}_2^b \times \mathrm{SL}_2^c$, where $\mathrm{SL}_2^a= \mathrm{SL}_{2,\alpha_2}$, $\mathrm{SL}_2^b \times \mathrm{SL}_2^c \hookrightarrow \mathrm{SL}_4$ by tensoring and $\mathrm{SL}_2^b \hookrightarrow \mathrm{SL}_3$ by $2\Lambda_1$. As $\mathfrak{sl}_{2,\xi_a}$-mod, $\mathfrak{g}[1]=12V_2$. As $\mathfrak{sl}_{2,\xi_b}$-mod, $\mathfrak{g}[1]=4V_2+4V_4$. As $\mathfrak{sl}_{2,\xi_c}$-mod, $\mathfrak{g}[1]=12V_2$. Then we have $(\mathrm{Q}_1^a,\mathrm{Q}_2^a)=(\mathrm{Q}_1^\tau,\mathrm{Q}_2^\tau)=(1,12),(\mathrm{Q}_1^b,\mathrm{Q}_2^b)=(6,44),(\mathrm{Q}_1^c,\mathrm{Q}_2^c)=(2,12)$. Thus, the orbit is quasi-admissible for $n=1$, raisable if $n \geq 2$.

\subsubsection{\dynkin[edge length=.20cm,labels={2,0,0,0,0,1,0},label]{E}{ooooooo} $A_3$, $\mathrm{L}=\mathrm{SL}_{2} \times \mathrm{Spin}_{8}$. }
\begin{align*}
    \mathfrak{g}[2] &= \mathrm{Spin}_{8} \oplus \mathrm{F} \\
    \mathfrak{g}[1] &= \mathrm{SL}_{2} \otimes \mathrm{Spin}_{8}^\prime
\end{align*}
The same as \ref{SO1}, we have $\mathbf{G}_{\gamma,\mathrm{der}}=\mathrm{SL}_{2,\alpha_7}^a \times \mathrm{Spin}_{7}^b$. As $\mathfrak{sl}_{2,\xi_a}$-mod, $\mathfrak{g}[1]=8V_2$. As $\mathfrak{sl}_{2,\xi_b}$-mod, $\mathfrak{g}[1]=8V_1+4V_2$. Then we have $(\mathrm{Q}_1^a,\mathrm{Q}_2^a)=(\mathrm{Q}_1^\tau,\mathrm{Q}_2^\tau)=(1,8),(\mathrm{Q}_1^b,\mathrm{Q}_2^b)=(1,4)$. Thus, the orbit is quasi-admissible for $n=1$, raisable if $n \geq 2$.

\subsubsection{\dynkin[edge length=.20cm,labels={0,0,0,0,0,2,0},label]{E}{ooooooo} $2A_2$, $\mathrm{L}=\mathrm{SL}_{2} \times \mathrm{Spin}_{10}$. } \label{Spin102}
\begin{align*}
    \mathfrak{g}[2] &=  \mathrm{SL}_{2} \otimes \mathrm{Spin}_{10}^\prime
\end{align*}
We have $\mathbf{G}_{\gamma,\mathrm{der}}=\mathrm{SL}_{2}^a \times \mathrm{G}_2^b$, where $\mathrm{SL}_2^a \hookrightarrow \mathrm{SL}_{2} \times \mathrm{Spin}_{10}$ by $\Lambda_1 \otimes 2\Lambda_1$, $\mathrm{G}_2^b \hookrightarrow \mathrm{Spin}_{10}$ 
 by standard representation. Then we have $(\mathrm{Q}_1^a,\mathrm{Q}_2^a)=(3,0),(\mathrm{Q}_1^b,\mathrm{Q}_2^b)=(\mathrm{Q}_1^\tau,\mathrm{Q}_2^\tau)=(1,0)$. Thus, the orbit is quasi-admissible for $n=1$, raisable if $n \geq 2$.

\subsubsection{\dynkin[edge length=.20cm,labels={0,2,0,0,0,0,0},label]{E}{ooooooo} $A_2+3A_1$, $\mathrm{L}=\mathrm{SL}_7$. } \label{A63}
\begin{align*}
    \mathfrak{g}[2] &= \bigwedge^3\mathrm{SL}_7
\end{align*}
We have $\mathbf{G}_{\gamma,\mathrm{der}}=\mathrm{G}_2 \hookrightarrow \mathrm{SL}_7$ by standard representation. Take $\tau$ to be associated with the long root of $\mathfrak{g}_\gamma$, the conditions $\text{(C1)-(C3)}$ are satisfied. Then we have $(\mathrm{Q}_1,\mathrm{Q}_2)=(\mathrm{Q}_1^\tau,\mathrm{Q}_2^\tau)=(\mathrm{Q}_{\mathrm{E}_7}(\alpha_3^\vee-\alpha_6^\vee),0)=(2,0)$. Thus, the orbit is quasi-admissible for $n=1,2$, raisable if $n \geq 3$.

\subsubsection{\dynkin[edge length=.20cm,labels={2,0,0,0,0,0,2},label]{E}{ooooooo} $(A_3+A_1)^{\prime \prime}$, $\mathrm{L}=\mathrm{Spin}_{10}$. } \label{SpinSO10}
\begin{align*}
    \mathfrak{g}[2] &= \mathrm{Spin}_{10} \oplus \mathrm{Spin}_{10}^\prime
\end{align*}
Similarly to \ref{SpinSO8}, we obtain the prehomogeneous space $(\mathrm{Spin}_9 \times \mathrm{GL}_1,\; \mathrm{Spin}_9)$. Then we have $\mathbf{G}_{\gamma,\mathrm{der}}=\mathrm{Spin}_7 \hookrightarrow \mathrm{Spin}_9 \hookrightarrow \mathrm{Spin}_{10}$ by standard representation. Then we have $(\mathrm{Q}_1,\mathrm{Q}_2)=(\mathrm{Q}_1^\tau,\mathrm{Q}_2^\tau)=(1,0)$. Thus, the orbit is quasi-admissible for $n=1$, raisable if $n \geq 2$.

\subsubsection{\dynkin[edge length=.20cm,labels={0,0,1,0,0,1,0},label]{E}{ooooooo} $2A_2 + A_1$, $\mathrm{L}=\mathrm{SL}_{2}^1 \times \mathrm{SL}_{4} \times \mathrm{SL}_{2}^2$. } \label{SO0}
\begin{align*}
    \mathfrak{g}[2] &= \mathrm{F} \oplus (\mathrm{SL}_{2}^1 \otimes \mathrm{SL}_{4} \otimes \mathrm{SL}_{2}^2) \\
    \mathfrak{g}[1] &= (\mathrm{SL}_{4}^* \otimes \mathrm{SL}_{2}^2) \oplus (\mathrm{SL}_2^1 \otimes \bigwedge^2\mathrm{SL}_4)
\end{align*}
We have $\mathbf{G}_{\gamma,\mathrm{der}}=\mathrm{SL}_{2}^a \times \mathrm{SL}_{2}^b \hookrightarrow \mathrm{SL}_{2}^1 \times \mathrm{SL}_{4} \times \mathrm{SL}_{2}^2$. Where $\mathrm{SL}_{2}^a \times \mathrm{SL}_{2}^b \hookrightarrow \mathrm{SL}_{2}^1 \times \mathrm{SL}_{2}^2$ by identity, $\mathrm{SL}_{2}^a \times \mathrm{SL}_{2}^b \hookrightarrow \mathrm{SL}_4$ by tensoring. As $\mathfrak{sl}_{2,\xi_a}$-mod, $\mathfrak{g}[1]=8V_2+V_4$. As $\mathfrak{sl}_{2,\xi_b}$-mod, $\mathfrak{g}[1]=6V_1+4V_3$. Then we have $(\mathrm{Q}_1^a,\mathrm{Q}_2^a)=(\mathrm{Q}_{\mathrm{E_7}}(\alpha_1^\vee + \alpha_2^\vee + \alpha_5^\vee),18)=(3,18),(\mathrm{Q}_1^b,\mathrm{Q}_2^b)=(3,16)$. Thus, the orbit is quasi-admissible for $n=1,3$, the method for raisablility does not work.

\subsubsection{\dynkin[edge length=.20cm,labels={1,0,0,1,0,0,0},label]{E}{ooooooo}$(A_3 + A_1)^\prime$, $\mathrm{L}=\mathrm{SL}_{2}^1 \times \mathrm{SL}_{2}^2 \times \mathrm{SL}_{4}$. } \label{3214}
\begin{align*}
    \mathfrak{g}[2] &= (\mathrm{SL}_{2}^1 \otimes \mathrm{SL}_{4}) \oplus \bigwedge^2\mathrm{SL}_4 \\
    \mathfrak{g}[1] &= \mathrm{SL}_{2}^2 \oplus (\mathrm{SL}_{2}^1 \otimes  \mathrm{SL}_{2}^2 \otimes \mathrm{SL}_{4})
\end{align*}
Let $V_a=\bigwedge^2\mathrm{SL}_4$, we obtain prehomogeneous space $(\mathbf{G}_{V_a}, V_b)=(\mathrm{Sp}_4 \times \mathrm{GL}_2^1,\; \mathrm{Sp}_4 \otimes \mathrm{GL}_2^1)$, which has been computed in \cite{SK77}. By Proposition~\ref{prop:restrict}, we have $\mathbf{G}_{\gamma,\mathrm{der}}=\mathrm{SL}_{2,\alpha_3}^a \times \Delta(\mathrm{SL}_{2})^b \times \mathrm{SL}_{2}^c$, where $\mathrm{SL}_{2,\alpha_3}^a = \mathrm{SL}_{2}^2$, $\Delta(\mathrm{SL}_{2})^b \hookrightarrow \mathrm{SL}_{2}^1 \times \mathrm{SL}_{4}$ by $\Lambda_1 \otimes \Lambda_1$, $\mathrm{SL}_{2}^c \hookrightarrow \mathrm{SL}_{4}$ by $\Lambda_1$. As $\mathfrak{sl}_{2,\xi_a}$-mod, $\mathfrak{g}[1]=9V_2$. As $\mathfrak{sl}_{2,\xi_b}$-mod, $\mathfrak{g}[1]=4V_1+4V_2+2V_3$. As $\mathfrak{sl}_{2,\xi_c}$-mod, $\mathfrak{g}[1]=10V_1+4V_2$. Then we have $(\mathrm{Q}_1^a,\mathrm{Q}_2^a)=(\mathrm{Q}_1^{\tau_1},\mathrm{Q}_2^{\tau_1})=(1,9),(\mathrm{Q}_1^b,\mathrm{Q}_2^b)=(2,12),(\mathrm{Q}_1^c,\mathrm{Q}_2^c)=(\mathrm{Q}_1^{\tau_2},\mathrm{Q}_2^{\tau_2})=(1,4)$. Thus, the orbit is not quasi-admissible, raisable for any $n$.

\subsubsection{\dynkin[edge length=.20cm,labels={0,0,2,0,0,0,0},label]{E}{ooooooo} $D_4(a_1)$, $\mathrm{L}=\mathrm{SL}_2 \times \mathrm{SL}_6$. } \label{A52S2}
\begin{align*}
    \mathfrak{g}[2] &= \mathrm{SL}_2 \otimes \bigwedge^2 \mathrm{SL}_6
\end{align*}
We have $\mathbf{G}_{\gamma,\mathrm{der}}=\mathrm{SL}_2^a \times \mathrm{SL}_2^b \times \mathrm{SL}_2^c \hookrightarrow \mathrm{SL}_6$ by $\Lambda_1 \oplus \Lambda_1 \oplus\Lambda_1$. Then we have $(\mathrm{Q}_1,\mathrm{Q}_2)=(\mathrm{Q}_1^\tau,\mathrm{Q}_2^\tau)=(1,0),(1,0),(1,0)$. Thus, the orbit is quasi-admissible for $n=1$, raisable if $n \geq 2$.

\subsubsection{\dynkin[edge length=.20cm,labels={1,0,0,0,1,0,1},label]{E}{ooooooo} $A_3 + 2A_1$, $\mathrm{L}=\mathrm{SL}_{4} \times \mathrm{SL}_{2}$. }
\begin{align*}
    \mathfrak{g}[2] &= \mathrm{F} \oplus \bigwedge^2 \mathrm{SL}_4  \oplus (\mathrm{SL}_{4} \otimes \mathrm{SL}_{2}) \\
    \mathfrak{g}[1] &= \mathrm{SL}_{4}^* \oplus \mathrm{SL}_{2} \oplus (\bigwedge^2 \mathrm{SL}_{4} \otimes \mathrm{SL}_{2})
\end{align*}
The same as \ref{3214}, we have $\mathbf{G}_{\gamma,\mathrm{der}}=\Delta(\mathrm{SL}_{2})^a \times \mathrm{SL}_{2}^b$. As $\mathfrak{sl}_{2,\xi_a}$-mod, $\mathfrak{g}[1]=4V_1+4V_2+2V_3$. As $\mathfrak{sl}_{2,\xi_b}$-mod, $\mathfrak{g}[1]=8V_1+5V_2$. Then we have $(\mathrm{Q}_1^a,\mathrm{Q}_2^a)=(2,12),(\mathrm{Q}_1^b,\mathrm{Q}_2^b)=(\mathrm{Q}_1^{\tau},\mathrm{Q}_2^{\tau})=(1,5)$. Thus, the orbit is quasi-admissible for $n=2$, raisable if $n \neq 2$.

\subsubsection{\dynkin[edge length=.20cm,labels={2,0,2,0,0,0,0},label]{E}{ooooooo} $D_4$, $\mathrm{L}=\mathrm{SL}_6$. }
\begin{align*}
    \mathfrak{g}[2] &= \mathrm{F} \oplus \bigwedge^2 \mathrm{SL}_6 
\end{align*}
The same as \ref{A52}, we have $\mathbf{G}_{\gamma,\mathrm{der}}=\mathrm{Sp}_6$. Then we have $(\mathrm{Q}_1,\mathrm{Q}_2)=(\mathrm{Q}_1^\tau,\mathrm{Q}_2^\tau)=(1,0)$. Thus, the orbit is quasi-admissible for $n=1$, raisable if $n \geq 2$.

\subsubsection{\dynkin[edge length=.20cm,labels={0,1,1,0,0,0,1},label]{E}{ooooooo} $D_4(a_1)+A_1$, $\mathrm{L}=\mathrm{SL}_{2} \times \mathrm{SL}_{4}$. } \label{3218}
\begin{align*}
    \mathfrak{g}[2] &= \mathrm{F} \oplus \mathrm{SL}_{2} \oplus (\mathrm{SL}_{2} \otimes \bigwedge^2\mathrm{SL}_{4})\\
    \mathfrak{g}[1] &= \mathrm{SL}_4 \oplus \mathrm{SL}_4^* \oplus (\mathrm{SL}_{2} \otimes \mathrm{SL}_{4})
\end{align*}
Let \(V_a = \mathrm{GL}_{2} \otimes \bigwedge^2\mathrm{SL}_{4}\) and let \(V_b\) be the remaining summand in \(\mathfrak{g}[2]\). Then the generic isotropy subgroup for \(V_a\) is \((\mathbf{G}_0)_{V_a} = \mathrm{SL}_{2}^1 \times \mathrm{GL}_{2}^2 \hookrightarrow \mathrm{SL}_4\) by tensoring. Note that $V_b$ is trivial as $(\mathbf{G}_0)_{V_a}$-mod. By Proposition~\ref{prop:restrict}, we have $\mathbf{G}_{\gamma,\mathrm{der}}=\mathrm{SL}_{2}^1 \times \mathrm{SL}_{2}^2$. Two $\mathrm{SL}_2$'s are equal. As $\mathfrak{sl}_{2,\xi}$-mod, $\mathfrak{g}[1]=8V_2$. Then we have $(\mathrm{Q}_1,\mathrm{Q}_2)=(\mathrm{Q}_1^\tau,\mathrm{Q}_2^\tau)=(1,8)$. Thus, the orbit is quasi-admissible for $n=1$, raisable if $n \geq 2$.

\subsubsection{\dynkin[edge length=.20cm,labels={0,0,0,1,0,1,0},label]{E}{ooooooo} $A_3+A_2$, $\mathrm{L}=\mathrm{SL}_{3} \times \mathrm{SL}_{2}^1 \times \mathrm{SL}_{2}^2 \times \mathrm{SL}_2^3$. } \label{3219}
\begin{align*}
    \mathfrak{g}[2] &= \mathrm{SL}_3^* \oplus (\mathrm{SL}_3 \otimes \mathrm{SL}_2^1 \otimes \mathrm{SL}_2^3) \\
    \mathfrak{g}[1] &= (\mathrm{SL}_2^1 \otimes \mathrm{SL}_2^2) \oplus (\mathrm{SL}_3 \otimes \mathrm{SL}_2^2 \otimes \mathrm{SL}_2^3)
\end{align*}
By computing dimension, we have $\mathbf{G}_{\gamma,\mathrm{der}}=\mathrm{SL}_2^2=\mathrm{SL}_{2,\alpha_5}$. As $\mathfrak{sl}_{2,\xi}$-mod, $\mathfrak{g}[1]=8V_2$. Then we have $(\mathrm{Q}_1,\mathrm{Q}_2)=(\mathrm{Q}_1^\tau,\mathrm{Q}_2^\tau)=(1,8)$. Thus, the orbit is quasi-admissible for $n=1$, raisable if $n \geq 2$.

\subsubsection{\dynkin[edge length=.20cm,labels={2,0,0,0,0,2,0},label]{E}{ooooooo} $A_4$, $\mathrm{L}=\mathrm{Spin}_{8} \times \mathrm{SL}_{2}$. } \label{Spin8SOA1}
\begin{align*}
    \mathfrak{g}[2] &= \mathrm{Spin}_8^\prime \oplus (\mathrm{SL}_2 \otimes \mathrm{Spin}_8)
\end{align*}
Let \(V_a = \mathrm{SL}_2 \otimes \mathrm{Spin}_8\) and let \(V_b\) be the remaining summand in \(\mathfrak{g}[2]\). Then the generic isotropy subgroup for \(V_a\) is \((\mathbf{G}_0)_{V_a} = \mathrm{SL}_{4} \times \mathrm{GL}_1\), where \(\mathrm{SL}_{4} \hookrightarrow \mathrm{Spin}_8\) by $\Lambda_1 \oplus \Lambda_1$. The pair \(\bigl((\mathbf{G}_0)_{V_a}, V_b\bigr) = \bigl(\mathrm{SL}_4 \times 2\mathrm{GL}_1,\; \mathrm{SL}_{4} \oplus \mathrm{SL}_{4}^*\bigr)\) is a prehomogeneous space. Applying Proposition~\ref{prop:restrict} and the result of \ref{A3*}, we have $\mathbf{G}_{\gamma,\mathrm{der}}=\mathrm{SL}_{3,\alpha_2,\alpha_4}$. Then we have $(\mathrm{Q}_1,\mathrm{Q}_2)=(1,0)$. Thus, the orbit is quasi-admissible for $n=1$, raisable if $n \geq 2$.

\subsubsection{\dynkin[edge length=.20cm,labels={0,0,0,0,2,0,0},label]{E}{ooooooo} $A_3+A_2+A_1$, $\mathrm{L}=\mathrm{SL}_5 \times \mathrm{SL}_3$. } \label{A42S3}
\begin{align*}
    \mathfrak{g}[2] &= \bigwedge^2\mathrm{SL}_5 \otimes \mathrm{SL}_3
\end{align*}
We have $\mathbf{G}_{\gamma,\mathrm{der}}=\mathrm{SO}_3 \hookrightarrow \mathrm{SL}_5 \times \mathrm{SL}_3$ by $2\Lambda_1 \otimes \Lambda_1$. Then we have $(\mathrm{Q}_1,\mathrm{Q}_2)=(24,0)$. By lemma~\ref{lem:SO}, \(\overline{\mathrm{SO}}_3^{(n,2)}\) has a finite-dimensional \((n,2)\)-genuine representation for $n \mid 24$, the method for raisability does not work.

\subsubsection{\dynkin[edge length=.20cm,labels={2,0,0,0,0,2,2},label]{E}{ooooooo} $(A_5)^{\prime \prime}$, $\mathrm{L}=\mathrm{Spin}_8$. }
\begin{align*}
    \mathfrak{g}[2] &= \mathrm{F} \oplus \mathrm{Spin}_8 \oplus \mathrm{Spin}_8^\prime
\end{align*}
The same as \ref{SpinSO8}, we have $\mathbf{G}_{\gamma,\mathrm{der}}=\mathrm{G}_2$. Then we have $(\mathrm{Q}_1,\mathrm{Q}_2)=(\mathrm{Q}_1^\tau,\mathrm{Q}_2^\tau)=(1,0)$. Thus, the orbit is quasi-admissible for $n=1$, raisable if $n \geq 2$.

\subsubsection{\dynkin[edge length=.20cm,labels={2,1,1,0,0,0,1},label]{E}{ooooooo} $D_4+A_1$, $\mathrm{L}=\mathrm{SL}_{4}$. }
\begin{align*}
    \mathfrak{g}[2] &= \bigwedge^2\mathrm{SL}_4 \oplus 3\mathrm{F} \\
    \mathfrak{g}[1] &= 2\mathrm{SL}_4 \oplus \mathrm{SL}_4^*
\end{align*}
The same as \ref{Spin6}, we have $\mathbf{G}_{\gamma,\mathrm{der}}=\mathrm{Sp}_{4}$. As $\mathfrak{sl}_{2,\xi}$-mod, $\mathfrak{g}[1]=6V_1+3V_2$. Then we have $(\mathrm{Q}_1,\mathrm{Q}_2)=(\mathrm{Q}_1^\tau,\mathrm{Q}_2^\tau)=(1,3)$. Thus, the orbit is quasi-admissible for $n=2$, raisable if $n \neq 2$.

\subsubsection{\dynkin[edge length=.20cm,labels={1,0,0,1,0,1,0},label]{E}{ooooooo} $A_4+A_1$, $\mathrm{L}=\mathrm{SL}_2^1 \times \mathrm{SL}_2^2 \times \mathrm{SL}_2^3 \times \mathrm{SL}_2^4$. }
\begin{align*}
    \mathfrak{g}[2] &= (\mathrm{SL}_2^1 \otimes \mathrm{SL}_2^3) \oplus \mathrm{F} \oplus (\mathrm{SL}_2^1 \otimes \mathrm{SL}_2^2 \otimes \mathrm{SL}_2^4)
\end{align*}
%Similarly as \ref{SO2}, $\mathrm{SL}_2^1 \otimes \mathrm{SO}_4$ gives generic isotropy subgroup $\mathrm{1}$. The same as \ref{dA1}, $\mathrm{SL}_2^1 \otimes \mathrm{SL}_2^2$ gives generic isotropy subgroup $\Delta(\mathrm{SL}_2)$. Thus, we have $\mathbf{G}_{\gamma,\mathrm{der}}=1$.
By computing dimension, we have $\text{dim}\mathbf{G}_{\gamma} = \text{dim}\mathbf{G}_0 - \text{dim}\mathfrak{g}[2] = 15 -13=2$. Thus, we have $\mathbf{G}_{\gamma,\mathrm{der}}=1$.

\subsubsection{\dynkin[edge length=.20cm,labels={2,0,0,1,0,1,0},label]{E}{ooooooo} $D_5(a_1)$, $\mathrm{L}=\mathrm{SL}_2^1 \times \mathrm{SL}_2^2 \times \mathrm{SL}_2^3 \times \mathrm{SL}_2^4$. }
\begin{align*}
    \mathfrak{g}[2] &= \mathrm{SL}_2^2 \oplus \mathrm{F} \oplus (\mathrm{SL}_2^1 \otimes \mathrm{SL}_2^2 \otimes \mathrm{SL}_2^4) \\
    \mathfrak{g}[1] &= (\mathrm{SL}_2^1 \otimes \mathrm{SL}_2^2 \otimes \mathrm{SL}_2^3) \oplus (\mathrm{SL}_2^3 \otimes \mathrm{SL}_2^4)
\end{align*}
By computing dimension, $\text{dim}\mathbf{G}_\gamma-\text{dim}\mathrm{SL}_2^3=12-11=1$. Thus, we have $\mathbf{G}_{\gamma,\mathrm{der}}=\mathrm{SL}_2^3=\mathrm{SL}_{2,\alpha_5}$. As $\mathfrak{sl}_{2,\xi}$-mod, $\mathfrak{g}[1]=6V_2$. Then we have $(\mathrm{Q}_1,\mathrm{Q}_2)=(\mathrm{Q}_1^\tau,\mathrm{Q}_2^\tau)=(1,6)$. Thus, the orbit is quasi-admissible for $n=1$, raisable if $n \geq 2$.

\subsubsection{\dynkin[edge length=.20cm,labels={0,0,0,2,0,0,0},label]{E}{ooooooo} $A_4+A_2$, $\mathrm{L}=\mathrm{SL}_2 \times \mathrm{SL}_3 \times \mathrm{SL}_4$. } \label{234}
\begin{align*}
    \mathfrak{g}[2] &= \mathrm{SL}_2 \otimes \mathrm{SL}_3 \otimes \mathrm{SL}_4
\end{align*}
By \cite[\S 2~Proposition~7]{SK77} and \ref{SO31}, we can deduce that $\mathbf{G}_{\gamma,\mathrm{der}} \cong \mathrm{SL}_2$. Check the embedding methods case by case, we obtain $\mathbf{G}_{\gamma,\mathrm{der}}=\mathrm{SL}_2 \hookrightarrow \mathrm{SL}_2 \times \mathrm{SL}_3 \times \mathrm{SL}_4$ by $\Lambda_1 \otimes 2\Lambda_1 \otimes 3\Lambda_1$. Then we have $(\mathrm{Q}_1,\mathrm{Q}_2)=(15,0)$. Thus, the orbit is quasi-admissible for $n \mid 15$, the method for raisability does not work.

\subsubsection{\dynkin[edge length=.20cm,labels={1,0,0,1,0,2,0},label]{E}{ooooooo} $(A_5)^\prime$, $\mathrm{L}=\mathrm{SL}_2^1 \times \mathrm{SL}_2^2 \times \mathrm{SL}_{2}^3 \times \mathrm{SL}_2^4$. }
\begin{align*}
    \mathfrak{g}[2] &= (\mathrm{SL}_2^1 \otimes \mathrm{SL}_2^3) \oplus(\mathrm{SL}_2^3 \otimes \mathrm{SL}_2^4) \oplus \mathrm{F} \\
    \mathfrak{g}[1] &= \mathrm{SL}_2^2 \oplus (\mathrm{SL}_2^1 \otimes \mathrm{SL}_2^2 \otimes \mathrm{SL}_2^3)
\end{align*}
The same as \ref{dA1}, we have $\mathbf{G}_{\gamma,\mathrm{der}}=\mathrm{SL}_{2}^2\times \Delta(\mathrm{SL}_2)=\mathrm{SL}_{2,\alpha_3}^a \times \Delta(\mathrm{SL}_2)^b$. As $\mathfrak{sl}_{2,\xi_a}$-mod, $\mathfrak{g}[1]=5V_2$. As $\mathfrak{sl}_{2,\xi_b}$-mod, $\mathfrak{g}[1]=4V_1+2V_3$. Then we have $(\mathrm{Q}_1^a,\mathrm{Q}_2^a)=(\mathrm{Q}_1^\tau,\mathrm{Q}_2^\tau)=(1, 5),(\mathrm{Q}_1^b,\mathrm{Q}_2^b)=(3,8)$. Thus, the orbit is not quasi-admissible for any $n$, raisable if $n \neq 2$.

\subsubsection{\dynkin[edge length=.20cm,labels={1,0,0,1,0,1,2},label]{E}{ooooooo} $A_5+A_1$, $\mathrm{L}=\mathrm{SL}_{2}^1 \times \mathrm{SL}_2^2 \times \mathrm{SL}_2^3$. }
\begin{align*}
    \mathfrak{g}[2] &= 2\mathrm{F} \oplus (\mathrm{SL}_{2}^1 \otimes \mathrm{SL}_2^2) \oplus (\mathrm{SL}_{2}^1 \otimes \mathrm{SL}_2^3) \\
    \mathfrak{g}[1] &= \mathrm{SL}_{2}^2 \oplus \mathrm{SL}_2^3 \oplus (\mathrm{SL}_{2}^1 \otimes \mathrm{SL}_2^2 \otimes \mathrm{SL}_2^3)
\end{align*}
The same as \ref{dA1},we have $\mathbf{G}_{\gamma,\mathrm{der}}=\Delta(\mathrm{SL}_{2})$. As $\mathfrak{sl}_{2,\xi}$-mod, $\mathfrak{g}[1]=4V_2 +V_4$. Then we have $(\mathrm{Q}_1,\mathrm{Q}_2)=(3,14)$. Thus, the orbit is quasi-admissible for $n=1,3$, the method for raisability does not work.

\subsubsection{\dynkin[edge length=.20cm,labels={2,0,0,0,2,0,0},label]{E}{ooooooo} $D_5(a_1)+A_1$, $\mathrm{L}=\mathrm{SL}_4 \times\mathrm{SL}_3$. } \label{3329}
\begin{align*}
    \mathfrak{g}[2] &= \mathrm{SL}_4 \oplus (\mathrm{SL}_3 \otimes \bigwedge^2\mathrm{SL}_4)
\end{align*}
The generic isotropy subgroup with respect to $\mathrm{SL}_3 \otimes \bigwedge^2\mathrm{SL}_4$ is $\mathrm{SL}_2^1 \times \mathrm{SL}_2^2 \hookrightarrow \mathrm{SL}_4 \times \mathrm{SL}_3$, where $\mathrm{SL}_2^1 \times \mathrm{SL}_2^2 \hookrightarrow \mathrm{SL}_4$ by tensoring and $\mathrm{SL}_2^1 \hookrightarrow \mathrm{SL}_3$ by $2\Lambda_1$. It gives prehomogeneous space $(\mathrm{SL}_2^1 \times \mathrm{GL}_2^2,\; \mathrm{SL}_2 \otimes \mathrm{GL}_2)$ on the other summand of $\mathfrak{g}[2]$. By Proposition~\ref{prop:restrict}, we have $\mathbf{G}_{\gamma,\mathrm{der}}=\mathrm{SL}_2 \hookrightarrow \mathrm{SL}_4 \times \mathrm{SL}_3$ by $2\Lambda_1 \otimes 2\Lambda_1$. Then we have $(\mathrm{Q}_1,\mathrm{Q}_2)=(8,0)$. Thus, the orbit is quasi-admissible for $n|8$, the method for raisability does not work since the unique choice of $\tau$ does not satisfy the condition $(\text{C}1)$.

\subsubsection{\dynkin[edge length=.20cm,labels={0,1,1,0,1,0,2},label]{E}{ooooooo} $D_6(a_2)$, $\mathrm{L}=\mathrm{SL}_{2}^1 \times \mathrm{SL}_2^2 \times \mathrm{SL}_2^3$. }
\begin{align*}
    \mathfrak{g}[2] &= \mathrm{SL}_{2}^1\oplus 2\mathrm{SL}_2^3 \oplus (\mathrm{SL}_{2}^1 \otimes \mathrm{SL}_2^3) \\
    \mathfrak{g}[1] &= \mathrm{SL}_{2}^2 \oplus (\mathrm{SL}_2^1 \otimes \mathrm{SL}_2^2) \oplus (\mathrm{SL}_2^2 \otimes \mathrm{SL}_2^3)
\end{align*}
By computing dimension, we have $\mathbf{G}_{\gamma,\mathrm{der}}=\mathrm{SL}_{2}^2=\mathrm{SL}_{2,\alpha_4}$. As $\mathfrak{sl}_{2,\xi}$-mod, $\mathfrak{g}[1]=5V_2$. Then we have $(\mathrm{Q}_1,\mathrm{Q}_2)=(\mathrm{Q}_1^\tau,\mathrm{Q}_2^\tau)=(1,5)$. Thus, the orbit is quasi-admissible for $n=2$, raisable if $n \neq 2$.

\subsubsection{\dynkin[edge length=.20cm,labels={0,0,2,0,0,2,0},label]{E}{ooooooo}$E_6(a_3)$, $\mathrm{L}=\mathrm{SL}_{2}^1 \times \mathrm{SL}_2^2 \times \mathrm{SL}_4$.} \label{3231}
\begin{align*}
    \mathfrak{g}[2] &= (\mathrm{SL}_4 \otimes \mathrm{SL}_2^2) \oplus (\bigwedge^2 \mathrm{SL}_4 \otimes \mathrm{SL}_2^1)
\end{align*}
The generic isotropy subgroup with respect to $\bigwedge^2 \mathrm{SL}_4 \otimes \mathrm{GL}_2^1$ in $\mathrm{SL}_4 \times \mathrm{GL}_2^1$ is $\mathrm{GL}_2^3 \times \mathrm{SL}_2^4 \hookrightarrow \mathrm{SL}_4$ by tensoring. It gives prehomogeneous space $(\mathrm{GL}_2^3 \times \mathrm{GL}_2^4 \times \mathrm{SL}_2^2,\; \mathrm{GL}_2^3 \otimes \mathrm{GL}_2^4 \otimes \mathrm{SL}_2^2)$. By Proposition~\ref{prop:restrict}, we have $\mathbf{G}_{\gamma,\mathrm{der}}=\mathrm{1}$.

\subsubsection{\dynkin[edge length=.20cm,labels={2,0,2,0,0,2,0},label]{E}{ooooooo} $D_5$, $\mathrm{L}=\mathrm{SL}_{2} \times \mathrm{SL}_4$. }
\begin{align*}
    \mathfrak{g}[2] &= \mathrm{F} \oplus \bigwedge^2\mathrm{SL}_4 \oplus (\mathrm{SL}_2 \otimes \mathrm{SL}_4)
\end{align*}
The same as \ref{3214}, we have $\mathbf{G}_{\gamma,\mathrm{der}}=\Delta(\mathrm{SL}_{2})^a \times \mathrm{SL}_2^b$. Then we have $(\mathrm{Q}_1^a,\mathrm{Q}_2^a)=(2,0),(\mathrm{Q}_1^b,\mathrm{Q}_2^b)=(\mathrm{Q}_1^\tau,\mathrm{Q}_2^\tau)=(1,0)$. Thus, the orbit is quasi-admissible for $n=1$, raisable if $n \geq 2$.

\begin{table}[H]
\centering
\resizebox{\textwidth}{!}{%  % 缩放表格到页面宽度
\begin{tabular}{cccccc}
\hline
$\mathcal{O}$ & special/even? & $\mathbf{G}_{\gamma,\text{der}}$ & $(Q_1, Q_2)$ & quasi-admissible, if and only if & raisable if \\
\hline
$\{0\}$ & yes/yes & $\mathrm{E}_{7}$ & $(1,0)$ & $n=1$ & $n \geq 2$ \\
$A_1$ & yes/no & $\mathrm{Spin}_{12}$ & $(1,8)$ & $n=1$ & $n \geq 2$ \\
$2A_1$ & yes/no & $\mathrm{SL}_{2,\alpha_7} \times \mathrm{Spin}_9$ & $(1,16),(1,8)$ & $n=1$ & $n \geq 2$ \\
$(3A_1)^{\prime\prime}$ & yes/yes & $\mathrm{F}_{4}$ & $(1,0)$ & $n=1$ & $n \geq 2$ \\
$(3A_1)^\prime$ & no/no & $\mathrm{SL}_{2,\alpha_1} \times \mathrm{Sp}_6$ & $(1,15),(1,8)$ & no such $n$ &  all $n$ \\
$A_2$ & yes/yes & $\mathrm{SL}_{6}$ & $(1,0)$ & $n=1$ & $n \geq 2$ \\
$4A_1$ & no/no & $\mathrm{Sp}_6$ & $(1,7)$ & $n=2$ & $n \neq 2$ \\
$A_2+A_1$ & yes/no & $\mathrm{SL}_{4}$ & $(1,6)$ & $n=1$ & $n \geq 2$ \\
$A_2+2A_1$ & yes/no & $\mathrm{SL}_{2,\alpha_2} \times \mathrm{SL}_2 \times \mathrm{SL}_2$ & $(1,12),(6,44),(2,12)$ & $n=1$ & $n \geq 2$ \\
$A_3$ & yes/no & $\mathrm{SL}_{2,\alpha_7} \times \mathrm{Spin}_{7}$ & $(1,8),(1,4)$ & $n=1$ & $n \geq 2$ \\
$2A_2$ & yes/yes & $\mathrm{SL}_{2} \times \mathrm{G}_2$ & $(3,0),(1,0)$ & $n=1$ & $n \geq 2$ \\
$A_2+3A_1$ & yes/yes & $\mathrm{G}_2$ & $(2,0)$ & $n=1,2$ & $n \geq 3$ \\
$(A_3+A_1)^{\prime \prime}$ & yes/yes & $\mathrm{Spin}_7$ & $(1,0)$ & $n=1$ & $n \geq 2$ \\
$2A_2 + A_1$ & no/no & $\mathrm{SL}_{2} \times \mathrm{SL}_2$ & $(3,18),(3,16)$& $n=1,3$ & n.a.\\
$(A_3 + A_1)^\prime$ & no/no & $\mathrm{SL}_{2,\alpha_3} \times \mathrm{SL}_{2} \times \mathrm{SL}_{2}$ & $(1,9), (2,12), (1,4)$ & no such $n$ & all $n$ \\
$D_4(a_1)$ & yes/yes & $\mathrm{SL}_2 \times \mathrm{SL}_2 \times \mathrm{SL}_2$ & $(1,0),(1,0),(1,0)$ & $n=1$ & $n \geq 2$ \\
$A_3 + 2A_1$ & no/no & $\mathrm{SL}_{2} \times \mathrm{SL}_{2}$ & $(2,12),(1,5)$ & $n=2$ & $n \neq 2$ \\
$D_4$ & yes/yes & $\mathrm{Sp}_6$& $(1,0)$ & $n=1$ &  $n \geq 2$\\
$D_4(a_1)+A_1$ & yes/no & $\mathrm{SL}_{2} \times \mathrm{SL}_2$ & $(1,8),(1,8)$ & $n=1$ & $n \geq 2$ \\
$A_3+A_2$ & yes/no & $\mathrm{SL}_{2,\alpha_5}$ & $(1,8)$ & $n=1$ & $n \geq 2$ \\
$A_4$ & yes/yes & $\mathrm{SL}_{3} $& $(1,0)$ & $n=1$ & $n \geq 2$ \\
$A_3+A_2+A_1$ & yes/yes & $\mathrm{SO}_3$& $(24,0)$ & $n|24$ & n.a. \\
$(A_5)^{\prime \prime}$ & yes/yes & $\mathrm{G}_2$& $(1,0)$ & $n=1$ & $n \geq 2$ \\
$D_4+A_1$ & no/no & $\mathrm{Sp}_{4}$ & $(1,3)$ & $n=2$ & $n \neq 2$ \\
$A_4+A_1$ & yes/no & $\mathrm{1}$ & n.a. & all n & n.a. \\
$D_5(a_1)$ & yes/no & $\mathrm{SL}_{2,\alpha_5}$ & $(1,6)$ & $n=1$ & $n \geq 2$ \\
$A_4+A_2$ & yes/yes & $\mathrm{SL}_2$& $(15,0)$ & $n|15$ & n.a. \\
$(A_5)^\prime$ & no/no & $\mathrm{SL}_{2,\alpha_3} \times \mathrm{SL}_2$ & $(1, 5),(3,8)$ & no such $n$ &  $n \neq 2$\\
$A_5+A_1$ & no/no & $\mathrm{SL}_{2}$ & $(3,14)$ & $n=1,3$ & n.a. \\
$D_5(a_1)+A_1$ & yes/yes & $\mathrm{SL}_2$& $(8,0)$ & $n | 8$ & n.a. \\
$D_6(a_2)$ & no/no & $\mathrm{SL}_{2,\alpha_4}$ & $(1,5)$ & $n=2$ & $n \neq 2$ \\
$E_6(a_3)$ & yes/yes & $\mathrm{1}$& n.a. & all n & n.a. \\
$D_5$ & yes/yes & $\mathrm{SL}_{2} \times \mathrm{SL}_2$ & $(2,0),(1,0)$ & $n=1$ & $n \geq 2$ \\
$E_7(a_5)$ & yes/yes & $\mathrm{1}$& n.a. & all n & n.a. \\
$A_6$ & yes/yes & $\mathrm{SL}_2$& $(7,0)$ & $n|7$ & n.a. \\
$D_5+A_1$ & yes/no & $\mathrm{SL}_{2}$ & $(2,4)$ & $n=1,2$ & $n \geq 3$ \\
$D_6(a_1)$ & yes/no & $\mathrm{SL}_{2,\alpha_4}$ & $(1,4)$ & $n=1$ & $n \geq 2$ \\
$E_7(a_4)$ & yes/yes & $\mathrm{1}$& n.a. & all n & n.a. \\
$D_6$ & no/no & $\mathrm{SL}_{2,\alpha_4}$ & $(1,3)$ & $n=2$ & $n \neq 2$ \\
$E_6(a_1)$ & yes/yes & $\mathrm{1}$& n.a. & all n & n.a. \\
$E_6$ & yes/yes & $\mathrm{SL}_2$& $(3,0)$ & $n=1,3$ & $n \neq 1,3$ \\
$E_7(a_3)$ & yes/yes & $\mathrm{1}$& n.a. & all n & n.a. \\
$E_7(a_2)$ & yes/yes & $\mathrm{1}$& n.a. & all n & n.a. \\
$E_7(a_1)$ & yes/yes & $\mathrm{1}$& n.a. & all n & n.a. \\
$E_7$ & yes/yes & $\mathrm{1}$& n.a. & all n & n.a. \\
\hline
\end{tabular}
}
\caption{Nilpotent orbits for \( \overline{E_7}^{(n)}\)} \label{tab:e7}
\end{table}

\subsubsection{\dynkin[edge length=.20cm,labels={0,0,0,2,0,2,0},label]{E}{ooooooo} $A_6$, $\mathrm{L}=\mathrm{SL}_3 \times \mathrm{SL}_{2}^1 \times \mathrm{SL}_2^2 \times \mathrm{SL}_2^3$. } \label{3.33}
\begin{align*}
    \mathfrak{g}[2] &= (\mathrm{SL}_2^2 \otimes \mathrm{SL}_2^3) \oplus (\mathrm{SL}_3 \otimes \mathrm{SL}_{2}^1 \otimes \mathrm{SL}_2^2)
\end{align*}
Let \(V_a = \mathrm{GL}_{2}^2 \otimes \mathrm{SL}_{2}^3\) and let \(V_b\) be the remaining summand in \(\mathfrak{g}[2]\). Then the generic isotropy subgroup for \(V_a\) is \((\mathbf{G}_0)_{V_a} = \Delta(\mathrm{SL}_{2}) \times \mathrm{GL}_{3} \times \mathrm{SL}_2^1\), where \(\Delta(\mathrm{SL}_{2})\) denotes the diagonal copy inside \(\mathrm{SL}_{2}^2 \times \mathrm{SL}_{2}^3\). The pair \(\bigl((\mathbf{G}_0)_{V_a}, V_b\bigr) = \bigl(\Delta(\mathrm{SL}_{2}) \times \mathrm{GL}_{3} \times \mathrm{SL}_2^1,\; \mathrm{SL}_{2} \otimes \mathrm{GL}_{3} \otimes \mathrm{SL}_2^1\bigr)\) is a prehomogeneous space. Applying Proposition~\ref{prop:restrict}, we have $\mathbf{G}_{\gamma,\mathrm{der}}=\mathrm{SL}_2 \hookrightarrow \mathrm{SL}_3 \times \mathrm{SL}_{2}^1 \times \mathrm{SL}_2^2 \times \mathrm{SL}_2^3$, given by $2\Lambda_1 \otimes \Lambda_1 \otimes \Lambda_1 \otimes \Lambda_1$. Then we have $(\mathrm{Q}_1,\mathrm{Q}_2)=(7,0)$. Thus, the orbit is quasi-admissible for $n \mid 7$, the method for raisability does not work.

\subsubsection{\dynkin[edge length=.20cm,labels={2,1,1,0,1,1,0},label]{E}{ooooooo} $D_5+A_1$, $\mathrm{L}=\mathrm{SL}_{2}^1 \times \mathrm{SL}_{2}^2$. } \label{3.34}
\begin{align*}
    \mathfrak{g}[2] &= 4\mathrm{F} \oplus (\mathrm{SL}_{2}^1 \otimes \mathrm{SL}_{2}^2) \\
    \mathfrak{g}[1] &= 3 \mathrm{SL}_{2}^1 \oplus \mathrm{SL}_{2}^2
\end{align*}
We have $\mathbf{G}_{\gamma,\mathrm{der}}=\Delta(\mathrm{SL}_{2})$, diagonally embedding into $\mathrm{SL}_{2}^1 \times \mathrm{SL}_{2}^2$. As $\mathfrak{sl}_{2,\xi}$-mod, $\mathfrak{g}[1]=4V_2$. Take $\tau$ to be the only choice, the conditions $\text{(C1)-(C3)}$ are satisfied. Then we have $(\mathrm{Q}_1,\mathrm{Q}_2)=(\mathrm{Q}_1^\tau,\mathrm{Q}_2^\tau)=(2,4)$. Thus, the orbit is quasi-admissible for $n=1,2$, raisable if $n \geq 3$.

\subsubsection{\dynkin[edge length=.20cm,labels={2,1,1,0,1,0,2},label]{E}{ooooooo} $D_6(a_1)$, $\mathrm{L}=\mathrm{SL}_{2}^1 \times \mathrm{SL}_{2}^2$. }
\begin{align*}
    \mathfrak{g}[2] &= 3\mathrm{SL}_{2}^2 \oplus 2\mathrm{F} \\
    \mathfrak{g}[1] &= 2\mathrm{SL}_{2}^1 \oplus (\mathrm{SL}_{2}^1 \otimes \mathrm{SL}_{2}^2)
\end{align*}
By computing dimension, we have $\mathbf{G}_{\gamma,\mathrm{der}}=\mathrm{SL}_{2}^1=\mathrm{SL}_{2,\alpha_4}$. As $\mathfrak{sl}_{2,\xi}$-mod, $\mathfrak{g}[1]=4V_2$. Then we have $(\mathrm{Q}_1,\mathrm{Q}_2)=(\mathrm{Q}_1^\tau,\mathrm{Q}_2^\tau)=(1,4)$. Thus, the orbit is quasi-admissible for $n=1$, raisable if $n \geq 2$.

\subsubsection{\dynkin[edge length=.20cm,labels={2,1,1,0,1,2,2},label]{E}{ooooooo} $D_6$, $\mathrm{L}=\mathrm{SL}_{2,\alpha_4}$. }
\begin{align*}
    \mathfrak{g}[2] &= 6\mathrm{F} \\
    \mathfrak{g}[1] &= 3\mathrm{SL}_{2,\alpha_4}
\end{align*}
We have $\mathbf{G}_{\gamma,\mathrm{der}}=\mathrm{SL}_{2,\alpha_4}$. As $\mathfrak{sl}_{2,\xi}$-mod, $\mathfrak{g}[1]=3V_2$. Then we have $(\mathrm{Q}_1,\mathrm{Q}_2)=(\mathrm{Q}_1^\tau,\mathrm{Q}_2^\tau)=(1,3)$. Thus, the orbit is quasi-admissible for $n=2$, raisable if $n \neq 2$.

\subsubsection{\dynkin[edge length=.20cm,labels={2,0,0,2,0,2,0},label]{E}{ooooooo} $E_6(a_1)$, $\mathrm{L}=\mathrm{SL}_{2}^1 \times \mathrm{SL}_2^2 \times \mathrm{SL}_2^3 \times \mathrm{SL}_2^4$. }
\begin{align*}
    \mathfrak{g}[2] &= \mathrm{SL}_2^2 \oplus (\mathrm{SL}_2^3 \otimes \mathrm{SL}_2^4) \oplus (\mathrm{SL}_{2}^1 \otimes \mathrm{SL}_2^2 \otimes \mathrm{SL}_2^3)
\end{align*}
By computing dimension, we have $\mathbf{G}_{\gamma,\mathrm{der}}=1$.

\subsubsection{\dynkin[edge length=.20cm,labels={2,0,2,2,0,2,0},label]{E}{ooooooo} $E_6$, $\mathrm{L}=\mathrm{SL}_{2}^1 \times \mathrm{SL}_2^2 \times \mathrm{SL}_2^3$. }
\begin{align*}
    \mathfrak{g}[2] &= 2\mathrm{F} \oplus (\mathrm{SL}_{2}^1 \otimes \mathrm{SL}_2^2) \oplus (\mathrm{SL}_2^2 \otimes \mathrm{SL}_2^3)
\end{align*}
The same as \ref{dA1}, we have $\mathbf{G}_{\gamma,\mathrm{der}}=\Delta(\mathrm{SL}_2)$. Take $\tau$ to be the only choice, the conditions $\text{(C1)-(C3)}$ are satisfied. Then we have $(\mathrm{Q}_1,\mathrm{Q}_2)=(\mathrm{Q}_1^\tau,\mathrm{Q}_2^\tau)=(3,0)$. Thus, the orbit is quasi-admissible for $n=1,3$, raisable if $n \neq 1,3$.

\subsection{Type $E_8$}
Consider the Dynkin diagram below:

\begin{center}
\newcommand{\al}[1]{\alpha_{#1}}
\dynkin[label,label macro/.code=\al{#1},edge length=1cm]{E}{oooooooo}
\end{center}

\subsubsection{\dynkin[edge length=.20cm,labels={0,0,0,0,0,0,0,1},label]{E}{oooooooo} $A_1$, $\mathrm{L}=\mathrm{E}_{7}$.}
\begin{align*}
    \mathfrak{g}[2] &= \mathrm{F} \\
    \mathfrak{g}[1] &= \mathrm{E}_7
\end{align*}
We have $\mathbf{G}_{\gamma,\mathrm{der}}=\mathrm{E}_{7}$. As $\mathfrak{sl}_{2,\xi}$-mod, $\mathfrak{g}[1]=32V_1+12V_2$. Then we have $(\mathrm{Q}_1,\mathrm{Q}_2)=(\mathrm{Q}_1^\tau,\mathrm{Q}_2^\tau)=(1,12)$. Thus, the orbit is quasi-admissible for $n=1$, raisable if $n \geq 2$.

\subsubsection{\dynkin[edge length=.20cm,labels={1,0,0,0,0,0,0,0},label]{E}{oooooooo} $2A_1$, $\mathrm{L}=\mathrm{Spin}_{14}$.}
\begin{align*}
    \mathfrak{g}[2] &= \mathrm{Spin}_{14} \\
    \mathfrak{g}[1] &= \mathrm{Spin}_{14}^\prime
\end{align*}
We have $\mathbf{G}_{\gamma,\mathrm{der}}=\mathrm{Spin}_{13} \hookrightarrow \mathrm{Spin}_{14}$ by $\Lambda_1$. As $\mathfrak{sl}_{2,\xi}$-mod, $\mathfrak{g}[1]=32V_1+16V_2$. Then we have $(\mathrm{Q}_1,\mathrm{Q}_2)=(\mathrm{Q}_1^\tau,\mathrm{Q}_2^\tau)=(1,16)$. Thus, the orbit is quasi-admissible for $n=1$, raisable if $n \geq 2$.

\subsubsection{\dynkin[edge length=.20cm,labels={0,0,0,0,0,0,1,0},label]{E}{oooooooo} $3A_1$, $\mathrm{L}=\mathrm{SL}_{2} \times \mathrm{E}_6$.}
\begin{align*}
    \mathfrak{g}[2] &= \mathrm{E}_6 \\
    \mathfrak{g}[1] &= \mathrm{SL}_{2} \otimes \mathrm{E}_6^*
\end{align*}
The same as \ref{E6}, we have $\mathbf{G}_{\gamma,\mathrm{der}}=\mathrm{SL}_{2,\alpha_8}^a \times \mathrm{F}_4^b$. As $\mathfrak{sl}_{2,\xi_a}$-mod, $\mathfrak{g}[1]=27V_2$. As $\mathfrak{sl}_{2,\xi_b}$-mod, $\mathfrak{g}[1]=30V_1+12V_2$. Then we have $(\mathrm{Q}_1^a,\mathrm{Q}_2^a)=(\mathrm{Q}_1^{\tau_1},\mathrm{Q}_2^{\tau_1})=(1,27),(\mathrm{Q}_1^b,\mathrm{Q}_2^b)=(\mathrm{Q}_1^{\tau_2},\mathrm{Q}_2^{\tau_2})=(1,12)$. Thus, the orbit is not quasi-admissible for any $n$, raisable for all $n$.

\subsubsection{\dynkin[edge length=.20cm,labels={0,0,0,0,0,0,0,2},label]{E}{oooooooo} $A_2$, $\mathrm{L}=\mathrm{E}_{7}$.} \label{E7}
\begin{align*}
    \mathfrak{g}[2] &= \mathrm{E}_7
\end{align*}
We have $\mathbf{G}_{\gamma,\mathrm{der}}=\mathrm{E}_{6} \hookrightarrow \mathrm{E}_7$ by standard representation. Then we have $(\mathrm{Q}_1,\mathrm{Q}_2)=(\mathrm{Q}_1^\tau,\mathrm{Q}_2^\tau)=(1,0)$. Thus, the orbit is quasi-admissible for $n=1$, raisable if $n \geq 2$.

\subsubsection{\dynkin[edge length=.20cm,labels={0,1,0,0,0,0,0,0},label]{E}{oooooooo} $4A_1$, $\mathrm{L}=\mathrm{SL}_{8}$.}
\begin{align*}
    \mathfrak{g}[2] &= \bigwedge^2 \mathrm{SL}_8 \\
    \mathfrak{g}[1] &= \bigwedge^5 \mathrm{SL}_8
\end{align*}
We have $\mathbf{G}_{\gamma,\mathrm{der}}=\mathrm{Sp}_{8} \hookrightarrow \mathrm{SL}_8$ by $\Lambda_1$. As $\mathfrak{sl}_{2,\xi}$-mod, $\mathfrak{g}[1]=26V_1+15V_2$. Then we have $(\mathrm{Q}_1,\mathrm{Q}_2)=(\mathrm{Q}_1^\tau,\mathrm{Q}_2^\tau)=(1,15)$. Thus, the orbit is quasi-admissible for $n=2$, raisable if $n \neq 2$.

\subsubsection{\dynkin[edge length=.20cm,labels={1,0,0,0,0,0,0,1},label]{E}{oooooooo} $A_2+A_1$, $\mathrm{L}=\mathrm{Spin}_{12}$.}
\begin{align*}
    \mathfrak{g}[2] &= \mathrm{F} \oplus \mathrm{Spin}_{12}^\prime \\
    \mathfrak{g}[1] &= \mathrm{Spin}_{12} \oplus \mathrm{Spin}_{12}^{\prime\prime}
\end{align*}
The same as \ref{Spin12}, we have $\mathbf{G}_{\gamma,\mathrm{der}}=\mathrm{SL}_{6}$. As $\mathfrak{sl}_{2,\xi}$-mod, $\mathfrak{g}[1]=24V_1+10V_2$. Then we have $(\mathrm{Q}_1,\mathrm{Q}_2)=(\mathrm{Q}_1^\tau,\mathrm{Q}_2^\tau)=(1,10)$. Thus, the orbit is quasi-admissible for $n=1$, raisable if $n \geq 2$.

\subsubsection{\dynkin[edge length=.20cm,labels={0,0,0,0,0,1,0,0},label]{E}{oooooooo} $A_2+2A_1$, $\mathrm{L}=\mathrm{Spin}_{10} \times \mathrm{SL}_{3}$.}
\begin{align*}
    \mathfrak{g}[2] &= \mathrm{Spin}_{10} \otimes \mathrm{SL}_{3} \\
    \mathfrak{g}[1] &= \mathrm{Spin}_{10}^\prime \otimes \mathrm{SL}_{3}^*
\end{align*}
We have $\mathbf{G}_{\gamma,\mathrm{der}}=\mathrm{Spin}_{7}^a \times \mathrm{SL}_{2}^b$, where $\mathrm{Spin}_{7}^a \hookrightarrow \mathrm{Spin}_{10}$ by $\Lambda_1$, $\mathrm{SL}_{2}^b \hookrightarrow \mathrm{Spin}_{10} \times \mathrm{SL}_{3}$ by $(\Lambda_1 \oplus \Lambda_1) \otimes 2\Lambda_1$. As $\mathfrak{sl}_{2,\xi_a}$-mod, $\mathfrak{g}[1]=24V_1+12V_2$. As $\mathfrak{sl}_{2,\xi_b}$-mod, $\mathfrak{g}[1]=8V_2+8V_4$. Then we have $(\mathrm{Q}_1^a,\mathrm{Q}_2^a)=(\mathrm{Q}_1^\tau,\mathrm{Q}_2^\tau)=(1,12),(\mathrm{Q}_1^b,\mathrm{Q}_2^b)=(6,88)$. Thus, the orbit is quasi-admissible for $n=1$, raisable if $n \geq 2$.

\subsubsection{\dynkin[edge length=.20cm,labels={1,0,0,0,0,0,0,2},label]{E}{oooooooo} $A_3$, $\mathrm{L}=\mathrm{Spin}_{12}$.}
\begin{align*}
    \mathfrak{g}[2] &= \mathrm{Spin}_{12} \oplus \mathrm{F} \\
    \mathfrak{g}[1] &= \mathrm{Spin}_{12}^\prime
\end{align*}
We have $\mathbf{G}_{\gamma,\mathrm{der}}=\mathrm{Spin}_{11} \hookrightarrow \mathrm{Spin}_{12}$ by $\Lambda_1$. As $\mathfrak{sl}_{2,\xi}$-mod, $\mathfrak{g}[1]=16V_1+8V_1$. Then we have $(\mathrm{Q}_1,\mathrm{Q}_2)=(\mathrm{Q}_1^\tau,\mathrm{Q}_2^\tau)=(1,8)$. Thus, the orbit is quasi-admissible for $n=1$, raisable if $n \geq 2$.

\subsubsection{\dynkin[edge length=.20cm,labels={0,0,1,0,0,0,0,0},label]{E}{oooooooo} $A_2+3A_1$, $\mathrm{L}=\mathrm{SL}_{2} \times \mathrm{SL}_{7}$.}
\begin{align*}
    \mathfrak{g}[2] &= \bigwedge^3\mathrm{SL}_7 \\
    \mathfrak{g}[1] &= \mathrm{SL}_2 \otimes \bigwedge^5 \mathrm{SL}_7
\end{align*}
The same as \ref{A63}, we have $\mathbf{G}_{\gamma,\mathrm{der}}=\mathrm{SL}_{2,\alpha_1}^a \times \mathrm{G}_{2}^b$. As $\mathfrak{sl}_{2,\xi_a}$-mod, $\mathfrak{g}[1]=21V_2$. As $\mathfrak{sl}_{2,\xi_b}$-mod, $\mathfrak{g}[1]=12V_1+12V_2+2V_3$. Then we have $(\mathrm{Q}_1^a,\mathrm{Q}_2^a)=(\mathrm{Q}_1^\tau,\mathrm{Q}_2^\tau)=(1,21),(\mathrm{Q}_1^b,\mathrm{Q}_2^b)=(2,20)$. Thus, the orbit is quasi-admissible for $n=2$, raisable if $n \neq 2$.

\subsubsection{\dynkin[edge length=.20cm,labels={2,0,0,0,0,0,0,0},label]{E}{oooooooo} $2A_2$, $\mathrm{L}=\mathrm{Spin}_{14}$.} \label{Spin14}
\begin{align*}
    \mathfrak{g}[2] &= \mathrm{Spin}_{14}^\prime
\end{align*}
We have $\mathbf{G}_{\gamma,\mathrm{der}}=\mathrm{G}_{2} \times \mathrm{G}_{2} \hookrightarrow \mathrm{Spin}_{14}$ by tensoring. For each $\mathrm{G}_2$ we have $(\mathrm{Q}_1,\mathrm{Q}_2)=(\mathrm{Q}_1^\tau,\mathrm{Q}_2^\tau)=(1,0)$. Thus, the orbit is quasi-admissible for $n=1$, raisable if $n \geq 2$.

\subsubsection{\dynkin[edge length=.20cm,labels={1,0,0,0,0,0,1,0},label]{E}{oooooooo} $2A_2+A_1$, $\mathrm{L}=\mathrm{Spin}_{10} \times \mathrm{SL}_2$.}
\begin{align*}
    \mathfrak{g}[2] &= \mathrm{F} \oplus (\mathrm{Spin}_{10}^\prime \otimes \mathrm{SL}_2) \\
    \mathfrak{g}[1] &= (\mathrm{Spin}_{10} \otimes \mathrm{SL}_2) \oplus \mathrm{Spin}_{10}^{\prime\prime}
\end{align*}
The same as \ref{Spin102}, we have $\mathbf{G}_{\gamma,\mathrm{der}}=\mathrm{SL}_{2}^a \times \mathrm{G}_2^b$. As $\mathfrak{sl}_{2,\xi_a}$-mod, $\mathfrak{g}[1]=16V_2+V_4$. As $\mathfrak{sl}_{2,\xi_b}$-mod, $\mathfrak{g}[1]=20V_1+8V_2$. Then we have $(\mathrm{Q}_1^a,\mathrm{Q}_2^a)=(3,26),(\mathrm{Q}_1^b,\mathrm{Q}_2^b)=(\mathrm{Q}_1^\tau,\mathrm{Q}_2^\tau)=(1,8)$. Thus, the orbit is quasi-admissible for $n=1$, raisable if $n \geq 2$.

\subsubsection{\dynkin[edge length=.20cm,labels={0,0,0,0,0,1,0,1},label]{E}{oooooooo} $A_3+A_1$, $\mathrm{L}=\mathrm{Spin}_{10} \times \mathrm{SL}_2$.}
\begin{align*}
    \mathfrak{g}[2] &= \mathrm{Spin}_{10} \oplus \mathrm{Spin}_{10}^\prime \\
    \mathfrak{g}[1] &= \mathrm{SL}_2 \oplus (\mathrm{Spin}_{10}^\prime \otimes \mathrm{SL}_2)
\end{align*}
The same as \ref{SpinSO10}, we have $\mathbf{G}_{\gamma,\mathrm{der}}=\mathrm{SL}_{2,\alpha_7}^a \times \mathrm{Spin}_7^b$. As $\mathfrak{sl}_{2,\xi_a}$-mod, $\mathfrak{g}[1]=17V_2$. As $\mathfrak{sl}_{2,\xi_b}$-mod, $\mathfrak{g}[1]=18V_1+8V_2$. Then we have $(\mathrm{Q}_1^a,\mathrm{Q}_2^a)=(\mathrm{Q}_1^{\tau_1},\mathrm{Q}_2^{\tau_1})=(1,17),(\mathrm{Q}_1^b,\mathrm{Q}_2^b)=(\mathrm{Q}_1^{\tau_2},\mathrm{Q}_2^{\tau_2})=(1,8)$. Thus, the orbit is not quasi-admissible for any $n$, raisable for all $n$.

\subsubsection{\dynkin[edge length=.20cm,labels={0,0,0,0,0,0,2,0},label]{E}{oooooooo} $D_4(a_1)$, $\mathrm{L}=\mathrm{E}_{6} \times \mathrm{SL}_2$.} \label{E62}
\begin{align*}
    \mathfrak{g}[2] &= \mathrm{E}_{6} \otimes \mathrm{SL}_2 
\end{align*}
We have $\mathbf{G}_{\gamma,\mathrm{der}}=\mathrm{Spin}_{8} \hookrightarrow \mathrm{E}_6$, one of its long root is $\alpha_2$. Then we have $(\mathrm{Q}_1,\mathrm{Q}_2)=(\mathrm{Q}_1^\tau,\mathrm{Q}_2^\tau)=(1,0)$. Thus, the orbit is quasi-admissible for $n=1$, raisable if $n \geq 2$.

\subsubsection{\dynkin[edge length=.20cm,labels={0,0,0,0,0,0,2,2},label]{E}{oooooooo} $D_4$, $\mathrm{L}=\mathrm{E}_{6}$.}
\begin{align*}
    \mathfrak{g}[2] &= \mathrm{F} \oplus \mathrm{E}_6 
\end{align*}
The same as \ref{E6}, we have $\mathbf{G}_{\gamma,\mathrm{der}}=\mathrm{F}_{4}$. Then we have $(\mathrm{Q}_1,\mathrm{Q}_2)=(\mathrm{Q}_1^\tau,\mathrm{Q}_2^\tau)=(1,0)$. Thus, the orbit is quasi-admissible for $n=1$, raisable if $n \geq 2$.

\subsubsection{\dynkin[edge length=.20cm,labels={0,0,0,0,1,0,0,0},label]{E}{oooooooo} $2A_2+2A_1$, $\mathrm{L}=\mathrm{SL}_5 \times \mathrm{SL}_4$.}
\begin{align*}
    \mathfrak{g}[2] &= \mathrm{SL}_5 \otimes \bigwedge^2\mathrm{SL}_4 \\
    \mathfrak{g}[1] &= \bigwedge^3\mathrm{SL}_5 \otimes \mathrm{SL}_4
\end{align*}
We have $\mathbf{G}_{\gamma,\mathrm{der}}=\mathrm{Sp}_4 \hookrightarrow \mathrm{SL}_5 \times \mathrm{SL}_4$ by $\Lambda(5) \otimes \Lambda_1$, where $\Lambda(5)$ denotes the $5$-dimensional irreducible representation of $\mathrm{Sp}_4$ factoring through \(\mathrm{SO}_5\). As $\mathfrak{sl}_{2,\xi}$-mod, $\mathfrak{g}[1]=8V_1+8V_2+4V_3+V_4$. Then we have $(\mathrm{Q}_1,\mathrm{Q}_2)=(3,34)$. Thus, the orbit is quasi-admissible for $n=1,3$, the method for raisability does not work.

\subsubsection{\dynkin[edge length=.20cm,labels={0,0,1,0,0,0,0,1},label]{E}{oooooooo} $A_3+2A_1$, $\mathrm{L}=\mathrm{SL}_{2} \times \mathrm{SL}_{6}$.}
\begin{align*}
    \mathfrak{g}[2] &= (\mathrm{SL}_{2} \otimes \mathrm{SL}_{6}^*) \oplus \bigwedge^2\mathrm{SL}_6\\
    \mathfrak{g}[1] &= \mathrm{SL}_6 \oplus (\mathrm{SL}_2 \otimes \bigwedge^4\mathrm{SL}_6)
\end{align*}
Similarly to \ref{3214}, we obtain prehomogeneous space $(\mathrm{GL}_{2} \times \mathrm{Sp}_{6},\; \mathrm{GL}_{2} \otimes \mathrm{Sp}_{6})$. Thus, we have $\mathbf{G}_{\gamma,\mathrm{der}}=\Delta(\mathrm{SL}_{2})^a \times \mathrm{Sp}_{4}^b$, where $\Delta(\mathrm{SL}_{2})^a \hookrightarrow \mathrm{SL}_{2} \times \mathrm{SL}_{6}$ by $\Lambda_1 \otimes \Lambda_1$, $\mathrm{Sp}_4^b \hookrightarrow \mathrm{SL}_6$ by standard representation. As $\mathfrak{sl}_{2,\xi_a}$-mod, $\mathfrak{g}[1]=8V_1+8V_2+4V_3$. As $\mathfrak{sl}_{2,\xi_b}$-mod, $\mathfrak{g}[1]=18V_1+9V_2$. Then we have $(\mathrm{Q}_1^a,\mathrm{Q}_2^a)=(2,24),(\mathrm{Q}_1^b,\mathrm{Q}_2^b)=(\mathrm{Q}_1^{\tau},\mathrm{Q}_2^{\tau})=(1,9)$. Thus, the orbit is quasi-admissible for $n=2$, raisable if $n \neq 2$.

\subsubsection{\dynkin[edge length=.20cm,labels={0,1,0,0,0,0,1,0},label]{E}{oooooooo} $D_4(a_1)+A_1$, $\mathrm{L}=\mathrm{SL}_{6} \times \mathrm{SL}_2$.}
\begin{align*}
    \mathfrak{g}[2] &= \mathrm{F} \oplus (\bigwedge^2\mathrm{SL}_6 \otimes \mathrm{SL}_2) \\
    \mathfrak{g}[1] &= (\mathrm{SL}_{6}^* \otimes \mathrm{SL}_2) \oplus \bigwedge^3\mathrm{SL}_6
\end{align*}
The same as \ref{A52S2}, we have $\mathbf{G}_{\gamma,\mathrm{der}}=\mathrm{SL}_{2} \times \mathrm{SL}_2 \times \mathrm{SL}_2$. As $\mathfrak{sl}_{2,\xi}$-mod, $\mathfrak{g}[1]=16V_1+8V_2$. Then we have $(\mathrm{Q}_1,\mathrm{Q}_2)=(1,8)$. Thus, the orbit is quasi-admissible for $n=1$, raisable if $n \geq 2$.

\subsubsection{\dynkin[edge length=.20cm,labels={1,0,0,0,0,1,0,0},label]{E}{oooooooo} $A_3+A_2$, $\mathrm{L}=\mathrm{Spin}_{8} \times \mathrm{SL}_3$.}
\begin{align*}
    \mathfrak{g}[2] &= \mathrm{SL}_3^* \oplus (\mathrm{Spin}_8 \otimes \mathrm{SL}_3) \\
    \mathfrak{g}[1] &= \mathrm{Spin}_8^{\prime\prime} \oplus (\mathrm{Spin}_8^\prime \otimes \mathrm{SL}_3)
\end{align*}
The generic isotropy subgroup with respect to $\mathrm{Spin}_8 \otimes \mathrm{SL}_3$ is $\mathrm{Spin}_5 \times \mathrm{SL}_2$, where $\mathrm{Spin}_5 \hookrightarrow \mathrm{Spin}_8$ by $\Lambda_1$ and $\mathrm{SL}_2 \hookrightarrow \mathrm{Spin}_{8} \times \mathrm{SL}_3$ by $2\Lambda_1 \otimes 2\Lambda_1$. Then we get prehomogeneous space $(\mathrm{GL}_2,\; S^2(\mathrm{SL}_2))$. By Proposition~\ref{prop:restrict}, we have $\mathbf{G}_{\gamma,\mathrm{der}}=\mathrm{Spin}_{5} \hookrightarrow \mathrm{Spin}_8$ by standard representation. As $\mathfrak{sl}_{2,\xi}$-mod, $\mathfrak{g}[1]=16V_1 + 8V_2$. Then we have $(\mathrm{Q}_1,\mathrm{Q}_2)=(\mathrm{Q}_1^\tau,\mathrm{Q}_2^\tau)=(1,8)$. Thus, the orbit is quasi-admissible for $n=1$, raisable if $n \geq 2$.

\subsubsection{\dynkin[edge length=.20cm,labels={2,0,0,0,0,0,0,2},label]{E}{oooooooo} $A_4$, $\mathrm{L}=\mathrm{Spin}_{12}$.}\label{SpinSO12}
\begin{align*}
    \mathfrak{g}[2] &= \mathrm{Spin}_{12} \oplus \mathrm{Spin}_{12}^\prime
\end{align*}
Similarly to \ref{SpinSO8}, we obtain prehomogeneous space $(\mathrm{Spin}_{11} \times \mathrm{GL}_1,\; \mathrm{Spin}_{11})$. Thus, we have $\mathbf{G}_{\gamma,\mathrm{der}}=\mathrm{SL}_{5} \hookrightarrow \mathrm{Spin}_{11} \mathrm{Spin}_{12}$ by $\Lambda_1 \oplus \Lambda_1$. Then we have $(\mathrm{Q}_1,\mathrm{Q}_2)=(\mathrm{Q}_1^\tau,\mathrm{Q}_2^\tau)=(1,0)$. Thus, the orbit is quasi-admissible for $n=1$, raisable if $n \geq 2$.

\subsubsection{\dynkin[edge length=.20cm,labels={0,0,0,1,0,0,0,0},label]{E}{oooooooo} $A_3+A_2+A_1$, $\mathrm{L}=\mathrm{SL}_{3} \times \mathrm{SL}_{2} \times \mathrm{SL}_5$.}
\begin{align*}
    \mathfrak{g}[2] &= \mathrm{SL}_3 \otimes \bigwedge^2 \mathrm{SL}_5\\
    \mathfrak{g}[1] &= \mathrm{SL}_3^* \otimes \mathrm{SL}_{2} \otimes \mathrm{SL}_5 
\end{align*}
The same as \ref{A42S3}, we have $\mathbf{G}_{\gamma,\mathrm{der}}=\mathrm{SO}_{3}^a \times \mathrm{SL}_{2,\alpha_2}^b$, where $\mathrm{SO}_3^a \hookrightarrow \mathrm{SL}_{3} \times \mathrm{SL}_{5}$ by $\Lambda_1 \otimes 2\Lambda$. As $\mathfrak{sl}_{2,\xi_a}$-mod, $\mathfrak{g}[1]=2V_3+2V_5+2V_7$. As $\mathfrak{sl}_{2,\xi_b}$-mod, $\mathfrak{g}[1]=15V_2$. Then we have $(\mathrm{Q}_1^a,\mathrm{Q}_2^a)=(24,160),(\mathrm{Q}_1^b,\mathrm{Q}_2^b)=(\mathrm{Q}_1^{\tau},\mathrm{Q}_2^{\tau})=(1,15)$. Thus, the orbit is quasi-admissible for $n=2$, raisable if $n \neq 2$.

\subsubsection{\dynkin[edge length=.20cm,labels={0,1,0,0,0,0,1,2},label]{E}{oooooooo} $D_4+A_1$, $\mathrm{L}=\mathrm{SL}_{6}$.}
\begin{align*}
    \mathfrak{g}[2] &= 2\mathrm{F} \oplus \bigwedge^2 \mathrm{SL}_6 \\
    \mathfrak{g}[1] &= \mathrm{SL}_6^* \oplus \bigwedge^3 \mathrm{SL}_6
\end{align*}
The same as \ref{A52}, we have $\mathbf{G}_{\gamma,\mathrm{der}}=\mathrm{Sp}_{6}$. As $\mathfrak{sl}_{2,\xi}$-mod, $\mathfrak{g}[1]=12V_1 +7V_2$. Then we have $(\mathrm{Q}_1,\mathrm{Q}_2)=(\mathrm{Q}_1^\tau,\mathrm{Q}_2^\tau)=(1,7)$. Thus, the orbit is quasi-admissible for $n=2$, raisable if $n \neq 2$.

\subsubsection{\dynkin[edge length=.20cm,labels={0,2,0,0,0,0,0,0},label]{E}{oooooooo} $D_4(a_1)+A_2$, $\mathrm{L}=\mathrm{SL}_{8}$.} \label{A73}
\begin{align*}
    \mathfrak{g}[2] &= \bigwedge^3\mathrm{SL}_8
\end{align*}
We have $\mathbf{G}_{\gamma,\mathrm{der}}=\mathrm{PGL}_{3} \hookrightarrow \mathrm{SL}_8$ by adjoint representation. Take $\tau$ to be associated with one of the simple roots of $\mathfrak{g}_\gamma$, the conditions $\text{(C1)-(C3)}$ are satisfied. Then we have $(\mathrm{Q}_1,\mathrm{Q}_2)=(\mathrm{Q}_1^\tau,\mathrm{Q}_2^\tau)=(6,0)$. A necessary condition for this orbit to be quasi-admissible is that \(n\) divides \(6\); we claim that this condition is also sufficient. For \(n = 1, 2\) the claim follows directly from Propposition~\ref{prop:quasi}. For \(n = 3, 6\) we can show that \(\overline{\mathrm{PGL}}_3^{(n)}\) admits a finite dimensional \(\mu_n\)-genuine representation by an argument completely analogous to that of Lemma~\ref{lem:SO}, replacing \(F^\times/2\) by \(F^\times/3\) throughout the “if" part of the proof in \cite[Lemma~2.8]{GLT25}. Hence the orbit is quasi‑admissible in these cases as well. Thus, the orbit is quasi-admissible for $n=1,2,3,6$, raisable if  $n \neq 1,2,3,6$.

\subsubsection{\dynkin[edge length=.20cm,labels={1,0,0,0,0,1,0,1},label]{E}{oooooooo} $A_4+A_1$, $\mathrm{L}=\mathrm{Spin}_8 \times \mathrm{SL}_2$.}
\begin{align*}
    \mathfrak{g}[2] &= \mathrm{F} \oplus \mathrm{Spin}_8^\prime \oplus (\mathrm{Spin}_8 \otimes \mathrm{SL}_2) \\
    \mathfrak{g}[1] &= \mathrm{SL}_2 \oplus \mathrm{Spin}_8^{\prime\prime} \oplus (\mathrm{Spin}_8^\prime \otimes \mathrm{SL}_2)
\end{align*}
The same as \ref{Spin8SOA1}, we have $\mathbf{G}_{\gamma,\mathrm{der}}=\mathrm{SL}_{3}$. As $\mathfrak{sl}_{2,\xi}$-mod, $\mathfrak{g}[1]=14V_1+6V_2$. Then we have $(\mathrm{Q}_1,\mathrm{Q}_2)=(\mathrm{Q}_1^\tau,\mathrm{Q}_2^\tau)=(1,6)$. Thus, the orbit is quasi-admissible for $n=1$, raisable if $n \geq 2$.

\subsubsection{\dynkin[edge length=.20cm,labels={1,0,0,0,1,0,0,0},label]{E}{oooooooo} $2A_3$, $\mathrm{L}=\mathrm{SL}_4^1 \times \mathrm{SL}_4^2$.}
\begin{align*}
    \mathfrak{g}[2] &= (\mathrm{SL}_4^1 \otimes \mathrm{SL}_4^2) \oplus \bigwedge^2 \mathrm{SL}_4^2\\
    \mathfrak{g}[1] &= (\mathrm{SL}_4^1)^* \oplus (\bigwedge^2 \mathrm{SL}_4^1 \otimes \mathrm{SL}_4^2)
\end{align*}
Similarly to \ref{3214}, we obtain prehomogeneous space $(\mathrm{GL}_{4}^1 \times \mathrm{Sp}_{4},\; \mathrm{GL}_{4}^1 \otimes \mathrm{Sp}_{4})$. Thus, we have $\mathbf{G}_{\gamma,\mathrm{der}}=\Delta(\mathrm{Sp}_{4}) \hookrightarrow \mathrm{SL}_4^1 \times \mathrm{SL}_4^2$. As $\mathfrak{sl}_{2,\xi}$-mod, $\mathfrak{g}[1]=8V_1 +7V_2+2V_3$. Then we have $(\mathrm{Q}_1,\mathrm{Q}_2)=(2,15)$. Thus, the orbit is quasi-admissible for $n=4$, the method for raisability does not work.

\subsubsection{\dynkin[edge length=.20cm,labels={1,0,0,0,0,1,0,2},label]{E}{oooooooo} $D_5(a_1)$, $\mathrm{L}=\mathrm{Spin}_{8} \times \mathrm{SL}_2$.}
\begin{align*}
    \mathfrak{g}[2] &= \mathrm{SL}_2 \oplus \mathrm{F} \oplus (\mathrm{Spin}_{8} \otimes \mathrm{SL}_2) \\
    \mathfrak{g}[1] &= \mathrm{Spin}_8^{\prime\prime} \oplus (\mathrm{Spin}_8^\prime \otimes \mathrm{SL}_2)
\end{align*}
Let \(V_a = \mathrm{Spin}_{8} \otimes \mathrm{SL}_2\) and let \(V_b\) be the remaining summand in \(\mathfrak{g}[2]\). Note that $V_b$ is trivial as $(\mathbf{G}_0)_{V_a}$-mod. By Proposition~\ref{prop:restrict}, we have $\mathbf{G}_{\gamma,\mathrm{der}}=\mathrm{SL}_{4} \hookrightarrow \mathrm{Spin}_8$ by $\Lambda_1 \oplus \Lambda_1$. As $\mathfrak{sl}_{2,\xi}$-mod, $\mathfrak{g}[1]=12V_1+6V_2$. Then we have $(\mathrm{Q}_1,\mathrm{Q}_2)=(\mathrm{Q}_1^\tau,\mathrm{Q}_2^\tau)=(1,6)$. Thus, the orbit is quasi-admissible for $n=1$, raisable if $n \geq 2$.

\subsubsection{\dynkin[edge length=.20cm,labels={0,0,0,1,0,0,0,1},label]{E}{oooooooo} $A_4+2A_1$, $\mathrm{L}=\mathrm{SL}_3 \times \mathrm{SL}_2 \times \mathrm{SL}_4$.}
\begin{align*}
    \mathfrak{g}[2] &= (\mathrm{SL}_3 \otimes \mathrm{SL}_2) \oplus (\mathrm{SL}_3^* \otimes \bigwedge^2\mathrm{SL}_4)\\
    \mathfrak{g}[1] &= \mathrm{SL}_4^* \oplus (\mathrm{SL}_3 \otimes \mathrm{SL}_2 \otimes \mathrm{SL}_4)
\end{align*}
The generic isotropy subgroup with respect to $\mathrm{SL}_3^* \otimes \bigwedge^2\mathrm{SL}_4$ is $\mathrm{SL}_2^1 \times \mathrm{SL}_2^2$, where $\mathrm{SL}_2^1 \times \mathrm{SL}_2^2 \hookrightarrow \mathrm{SL}_4$ by tensoring and $\mathrm{SL}_2^1 \hookrightarrow \mathrm{SL}_3$ by $2\Lambda_1$. Then we get prehomogeneous space $(\mathrm{SL}_2^1 \times \mathrm{SL}_2^2 \times \mathrm{GL}_2,\; S^2(\mathrm{SL}_2) \otimes \mathrm{F} \otimes \mathrm{GL}_2)$, whose generic isotropy subgroup is $\mathrm{SL}_{2}^2$. By Proposition~\ref{prop:restrict}, We have $\mathbf{G}_{\gamma,\mathrm{der}}=\mathrm{SL}_{2}^2$. As $\mathfrak{sl}_{2,\xi}$-mod, $\mathfrak{g}[1]=14V_2$. Take $\tau$ to be the only choice, the conditions $\text{(C1)-(C3)}$ are satisfied. Then we have $(\mathrm{Q}_1,\mathrm{Q}_2)=(\mathrm{Q}_1^\tau,\mathrm{Q}_2^\tau)=(2,14)$. Thus, the orbit is quasi-admissible for $n=1,2$, raisable if $n \geq 3$.

\subsubsection{\dynkin[edge length=.20cm,labels={0,0,0,0,0,2,0,0},label]{E}{oooooooo} $A_4+A_2$, $\mathrm{L}=\mathrm{Spin}_{10} \times \mathrm{SL}_{3}$.} \label{Spin103}
\begin{align*} 
    \mathfrak{g}[2] &= \mathrm{Spin}_{10}^\prime \otimes \mathrm{SL}_3
\end{align*}
We have $\mathbf{G}_{\gamma,\mathrm{der}}=\mathrm{SL}_{2}^a \times \mathrm{SL}_{2}^b$, where $\mathrm{SL}_{2}^a \hookrightarrow \mathrm{Spin}_{10}$ by $\Lambda_1 \oplus \Lambda_1$, $\mathrm{SL}_{2}^b \hookrightarrow \mathrm{Spin}_{10} \times \mathrm{SL}_{3}$ by $(\Lambda_1 \oplus \Lambda_1 \oplus 4\Lambda_1)\otimes 2\Lambda_1 $. Then we have $(\mathrm{Q}_1^a,\mathrm{Q}_2^a)=(\mathrm{Q}_1^\tau,\mathrm{Q}_2^\tau)=(1,0),(\mathrm{Q}_1^b,\mathrm{Q}_2^b)=(15,0)$. Thus, the orbit is quasi-admissible for $n=1$, raisable if $n \geq 2$.

\subsubsection{\dynkin[edge length=.20cm,labels={2,0,0,0,0,1,0,1},label]{E}{oooooooo} $A_5$, $\mathrm{L}=\mathrm{Spin}_{8} \times \mathrm{SL}_2$.}
\begin{align*}
    \mathfrak{g}[2] &= \mathrm{Spin}_8 \oplus \mathrm{F} \oplus \mathrm{Spin}_8^\prime \\
    \mathfrak{g}[1] &= \mathrm{SL}_2 \oplus (\mathrm{Spin}_{8} \otimes \mathrm{SL}_2)
\end{align*}
The same as \ref{SpinSO8}, we have $\mathbf{G}_{\gamma,\mathrm{der}}=\mathrm{SL}_{2,\alpha_7}^a \times \mathrm{G}_2^b$. As $\mathfrak{sl}_{2,\xi_a}$-mod, $\mathfrak{g}[1]=9V_2$. As $\mathfrak{sl}_{2,\xi_b}$-mod, $\mathfrak{g}[1]=10V_1+4V_2$. Then we have $(\mathrm{Q}_1^a,\mathrm{Q}_2^a)=(\mathrm{Q}_1^{\tau_1},\mathrm{Q}_2^{\tau_1})=(1,9),(\mathrm{Q}_1^b,\mathrm{Q}_2^b)=(\mathrm{Q}_1^{\tau_2},\mathrm{Q}_2^{\tau_2})=(1,4)$. Thus, the orbit is not quasi-admissible for any $n$, raisable if for all $n$.

\subsubsection{\dynkin[edge length=.20cm,labels={0,0,0,1,0,0,0,2},label]{E}{oooooooo} $D_5(a_1)+A_1$, $\mathrm{L}=\mathrm{SL}_3 \times \mathrm{SL}_2 \times \mathrm{SL}_4$.}
\begin{align*}
    \mathfrak{g}[2] &= \mathrm{SL}_4 \oplus (\mathrm{SL}_3 \otimes \bigwedge^2\mathrm{SL}_4) \\
    \mathfrak{g}[1] &= \mathrm{SL}_3^* \otimes \mathrm{SL}_2 \otimes \mathrm{SL}_4^*
\end{align*}
The same as \ref{3329}, we have $\mathbf{G}_{\gamma,\mathrm{der}}=\mathrm{SL}_{2,\alpha_2}^a \times \mathrm{SL}_2^b$. As $\mathfrak{sl}_{2,\xi_a}$-mod, $\mathfrak{g}[1]=12V_2$. As $\mathfrak{sl}_{2,\xi_b}$-mod, $\mathfrak{g}[1]=2V_1+4V_3+2V_5$. Then we have $(\mathrm{Q}_1^a,\mathrm{Q}_2^a)=(\mathrm{Q}_1^{\tau},\mathrm{Q}_2^{\tau})=(1,12),(\mathrm{Q}_1^b,\mathrm{Q}_2^b)=(8,56)$. Thus, the orbit is quasi-admissible for $n=1$, raisable if $n \geq 2$.

\subsubsection{\dynkin[edge length=.20cm,labels={0,0,1,0,0,1,0,0},label]{E}{oooooooo} $A_4+A_2+A_1$, $\mathrm{L}=\mathrm{SL}_2 \times \mathrm{SL}_4 \times \mathrm{SL}_3$.}
\begin{align*}
    \mathfrak{g}[2] &= \mathrm{F} \oplus (\mathrm{SL}_2 \otimes \mathrm{SL}_4 \otimes \mathrm{SL}_3) \\
    \mathfrak{g}[1] &= (\mathrm{SL}_4^* \otimes \mathrm{SL}_3) \oplus (\mathrm{SL}_2 \otimes \bigwedge^2\mathrm{SL}_4)
\end{align*}
The same as \ref{234}, we have $\mathbf{G}_{\gamma,\mathrm{der}}=\mathrm{SL}_{2}$. As $\mathfrak{sl}_{2,\xi}$-mod, $\mathfrak{g}[1]=2V_2+2V_4+2V_6$. Then we have $(\mathrm{Q}_1,\mathrm{Q}_2)=(15,92)$. Thus, the orbit is quasi-admissible for $n|15$, the method for raisability does not work.

\subsubsection{\dynkin[edge length=.20cm,labels={0,2,0,0,0,0,0,2},label]{E}{oooooooo} $D_4+A_2$, $\mathrm{L}=\mathrm{SL}_7$.}
\begin{align*}
    \mathfrak{g}[2] &= \mathrm{SL}_7^* \oplus \bigwedge^3 \mathrm{SL}_7
\end{align*}
The same as \ref{A63}, the summand $\bigwedge^3 \mathrm{SL}_7$
gives prehomogeneous space $(\mathrm{G}_2 \times \mathrm{GL}_1, \mathrm{G}_2)$, which has been computed in \cite{SK77}. By Prop~\ref{prop:restrict}, we have $\mathbf{G}_{\gamma,\mathrm{der}}=\mathrm{SL}_3 \hookrightarrow \mathrm{G}_2 \hookrightarrow \mathrm{SL}_7$ by $\Lambda_1 \oplus \Lambda_1$. Take $\tau$ to be associated with one of the simple roots of $\mathfrak{g}_\gamma$, the conditions $\text{(C1)-(C3)}$ are satisfied. Then we have $(\mathrm{Q}_1,\mathrm{Q}_2)=(\mathrm{Q}_1^\tau,\mathrm{Q}_2^\tau)=(2,0)$. Thus, the orbit is quasi-admissible for $n=1,2$, raisable if $n \geq 3$.

\subsubsection{\dynkin[edge length=.20cm,labels={2,0,0,0,0,0,2,0},label]{E}{oooooooo} $E_6(a_3)$, $\mathrm{L}=\mathrm{Spin}_{10} \times \mathrm{SL}_2$.}
\begin{align*}
    \mathfrak{g}[2] &= (\mathrm{Spin}_{10} \otimes \mathrm{SL}_2) \oplus \mathrm{Spin}_{10}^\prime
\end{align*}
The first summand gives prehomogeneous space $(\mathrm{Spin}_8 \times 2\mathrm{GL}_1, \mathrm{Spin}_8 \oplus \mathrm{Spin}_8^\prime)$, which is the same as \ref{SpinSO8}. By Proposition~\ref{prop:restrict}, we have $\mathbf{G}_{\gamma,\mathrm{der}}=\mathrm{G}_{2} \hookrightarrow \mathrm{SO}_8 \hookrightarrow \mathrm{SO}_{10}$ by standard representation. Then we have $(\mathrm{Q}_1,\mathrm{Q}_2)=(\mathrm{Q}_1^\tau,\mathrm{Q}_2^\tau)=(1,0)$. Thus, the orbit is quasi-admissible for $n=1$, raisable if $n \geq 2$.

\subsubsection{\dynkin[edge length=.20cm,labels={2,0,0,0,0,0,2,2},label]{E}{oooooooo} $D_5$, $\mathrm{L}=\mathrm{Spin}_{10}$.}
\begin{align*}
    \mathfrak{g}[2] &= \mathrm{F} \oplus \mathrm{Spin}_{10} \oplus \mathrm{Spin}_{10}^\prime
\end{align*}
The same as \ref{SpinSO10}, we have $\mathbf{G}_{\gamma,\mathrm{der}}=\mathrm{Spin}_{7}$. Then we have $(\mathrm{Q}_1,\mathrm{Q}_2)=(\mathrm{Q}_1^\tau,\mathrm{Q}_2^\tau)=(1,0)$. Thus, the orbit is quasi-admissible for $n=1$, raisable if $n \geq 2$.

\subsubsection{\dynkin[edge length=.20cm,labels={0,0,0,1,0,0,1,0},label]{E}{oooooooo} $A_4+A_3$, $\mathrm{L}=\mathrm{SL}_3^1 \times \mathrm{SL}_2^1 \times \mathrm{SL}_3^2 \times \mathrm{SL}_2^2$.}
\begin{align*}
    \mathfrak{g}[2] &= (\mathrm{SL}_3^1 \otimes \mathrm{SL}_2^1 \otimes \mathrm{SL}_2^2) \oplus ((\mathrm{SL}_3^1)^* \otimes \mathrm{SL}_3^2) \\
    \mathfrak{g}[1] &= (\mathrm{SL}_3^2 \otimes \mathrm{SL}_2^2) \oplus (\mathrm{SL}_3^1 \otimes \mathrm{SL}_2^1 \otimes (\mathrm{SL}_3^2)^*)
\end{align*}
Let \(V_a = (\mathrm{GL}_{3}^1)^* \otimes \mathrm{SL}_{3}^2\) and let \(V_b\) be the remaining summand in \(\mathfrak{g}[2]\). Then the generic isotropy subgroup for \(V_a\) is \((\mathbf{G}_0)_{V_a} = \Delta(\mathrm{SL}_{3}) \times \mathrm{GL}_{1} \times \mathrm{SL}_2^1 \times \mathrm{SL}_2^2\), where \(\Delta(\mathrm{SL}_{3})\) denotes the diagonal copy inside \(\mathrm{SL}_{3}^1 \times \mathrm{SL}_{3}^2\). The pair \(\bigl((\mathbf{G}_0)_{V_a}, V_b\bigr) = \bigl(\Delta(\mathrm{SL}_{3}) \times \mathrm{GL}_{1} \times \mathrm{SL}_2^1 \times \mathrm{SL}_2^2,\; \mathrm{GL}_{3} \otimes \mathrm{SL}_2^1 \otimes \mathrm{SL}_2^2)\) is a prehomogeneous space. Applying Proposition~\ref{prop:restrict} yields \(\mathbf{G}_{\gamma,\mathrm{der}} = \mathrm{SL}_{2} \hookrightarrow \mathrm{SL}_3^1 \times \mathrm{SL}_2^1 \times \mathrm{SL}_3^2 \times \mathrm{SL}_2^2\) by $2\Lambda_1 \otimes \Lambda_1 \otimes 2\Lambda_1 \otimes \Lambda_1$. As $\mathfrak{sl}_{2,\xi}$-mod, $\mathfrak{g}[1]=3V_2+3V_4+V_6$. Then we have $(\mathrm{Q}_1,\mathrm{Q}_2)=(10,68)$. Thus, the orbit is quasi-admissible for $n|10$, the method for raisability does not work.

\subsubsection{\dynkin[edge length=.20cm,labels={1,0,0,1,0,0,0,1},label]{E}{oooooooo} $A_5+A_1$, $\mathrm{L}=\mathrm{SL}_{2}^1 \times \mathrm{SL}_2^2 \times \mathrm{SL}_4$.}
\begin{align*}
    \mathfrak{g}[2] &= (\mathrm{SL}_{2}^1 \otimes \mathrm{SL}_4) \oplus (\mathrm{SL}_{2}^1 \otimes \mathrm{SL}_2^2) \oplus \bigwedge^2\mathrm{SL}_4\\
    \mathfrak{g}[1] &= \mathrm{SL}_{2}^2 \oplus \mathrm{SL}_4^* \oplus (\mathrm{SL}_{2}^1 \otimes \mathrm{SL}_2^2 \otimes \mathrm{SL}_4)
\end{align*}
Let \(V_a = (\mathrm{SL}_{2}^1 \otimes \mathrm{SL}_4) \oplus \bigwedge^2\mathrm{SL}_4\) and let \(V_b\) be the remaining summand in \(\mathfrak{g}[2]\). By the result in \ref{3214}, we can deduce that $\bigl((\mathbf{G}_0)_{V_a}, V_b\bigr) = (\mathrm{SL}_2 \times \Delta(\mathrm{SL}_{2}) \times \mathrm{GL}_2^2, \; \mathrm{F} \otimes \mathrm{SL}_{2} \otimes \mathrm{GL}_2^2)$, which is a prehomogeneous space. By Proposition~\ref{prop:restrict}, we have $\mathbf{G}_{\gamma,\mathrm{der}}=\mathrm{SL}_{2}^a \times \mathrm{SL}_2^b$, where $\mathrm{SL}_{2}^a \hookrightarrow \mathrm{SL}_4$ by $\Lambda_1$, $\mathrm{SL}_2^b \hookrightarrow \mathrm{SL}_{2}^1 \times \mathrm{SL}_2^2 \times \mathrm{SL}_4$ by $\Lambda_1 \otimes \Lambda_1 \otimes \Lambda_1$. As $\mathfrak{sl}_{2,\xi_a}$-mod, $\mathfrak{g}[1]=12V_1+5V_2$. As $\mathfrak{sl}_{2,\xi_b}$-mod, $\mathfrak{g}[1]=4V_1+4V_2+2V_3+V_4$. Then we have $(\mathrm{Q}_1^a,\mathrm{Q}_2^a)=(\mathrm{Q}_1^{\tau},\mathrm{Q}_2^{\tau})=(1,5),(\mathrm{Q}_1^b,\mathrm{Q}_2^b)=(3,22)$. Thus, the orbit is not quasi-admissible for any $n$, raisable if $n \neq 2$.

\subsubsection{\dynkin[edge length=.20cm,labels={0,0,1,0,0,1,0,1},label]{E}{oooooooo} $D_5(a_1)+A_2$, $\mathrm{L}=\mathrm{SL}_2^1 \times \mathrm{SL}_4 \times \mathrm{SL}_2^2$.}
\begin{align*}
    \mathfrak{g}[2] &= \mathrm{F} \oplus \mathrm{SL}_4 \oplus (\mathrm{SL}_2^1 \otimes \mathrm{SL}_4^* \otimes \mathrm{SL}_2^2) \\
    \mathfrak{g}[1] &= \mathrm{SL}_2^2 \oplus (\mathrm{SL}_4 \otimes \mathrm{SL}_2^2) \oplus (\mathrm{SL}_2^1 \otimes \bigwedge^2 \mathrm{SL}_4)
\end{align*}
Let \(V_a = \mathrm{SL}_2^1 \otimes \mathrm{GL}_4^* \otimes \mathrm{SL}_2^2\) and let \(V_b\) be the remaining summand in \(\mathfrak{g}[2]\). Then the generic isotropy subgroup for \(V_a\) is \((\mathbf{G}_0)_{V_a} = \mathrm{SL}_{2}^3 \times \mathrm{SL}_{2}^4\), where \(\mathrm{SL}_{2}^3 \times \mathrm{SL}_{2}^4 \hookrightarrow \mathrm{SL}_2^1 \times \mathrm{SL}_4\) is identity and \(\mathrm{SL}_{2}^3 \times \mathrm{SL}_{2}^4 \hookrightarrow \mathrm{SL}_4\) by tensoring. The pair \(\bigl((\mathbf{G}_0)_{V_a}, V_b\bigr) = \bigl(\mathrm{SL}_2^3 \times \mathrm{GL}_2^4,\; \mathrm{SL}_{2}^3 \otimes \mathrm{GL}_{2}^4\bigr)\) is a prehomogeneous space. Applying Proposition~\ref{prop:restrict} yields $\mathbf{G}_{\gamma,\mathrm{der}}=\mathrm{SL}_{2} \hookrightarrow \mathrm{SL}_2^1 \times \mathrm{SL}_4 \times \mathrm{SL}_2^2$ by $\Lambda_1 \otimes 2\Lambda_1 \otimes \Lambda_1$. As $\mathfrak{sl}_{2,\xi}$-mod, $\mathfrak{g}[1]=5V_2+3V_4$. Then we have $(\mathrm{Q}_1,\mathrm{Q}_2)=(6,35)$. Thus, the orbit is quasi-admissible for $n=4,12$, the method for raisability does not work.

\subsubsection{\dynkin[edge length=.20cm,labels={0,1,1,0,0,0,1,0},label]{E}{oooooooo} $D_6(a_2)$, $\mathrm{L}=\mathrm{SL}_2^1 \times \mathrm{SL}_4 \times \mathrm{SL}_2^2$.}
\begin{align*}
    \mathfrak{g}[2] &= \mathrm{SL}_2^2 \oplus (\mathrm{SL}_2^1 \otimes \mathrm{SL}_2^2) \oplus (\mathrm{SL}_2^1 \otimes \bigwedge^2\mathrm{SL}_4) \\
    \mathfrak{g}[1] &= \mathrm{SL}_4 \oplus (\mathrm{SL}_2^1 \otimes \mathrm{SL}_4) \oplus (\mathrm{SL}_4^* \otimes \mathrm{SL}_2^2)
\end{align*}
The generic isotropy subgroup with respect to $\mathrm{GL}_2^1 \otimes \bigwedge^2\mathrm{SL}_4$ is $\mathrm{GL}_2^3 \times \mathrm{SL}_2^4 \hookrightarrow \mathrm{SL}_4$ by tensoring. Then we obtain prehomogeneous space $(\mathrm{GL}_2^2 \times \mathrm{GL}_2^3 \times \mathrm{GL}_2^4,\; 3\mathrm{GL}_2^2)$. By Proposition~\ref{prop:restrict} and dimension computing, we have $\mathbf{G}_{\gamma,\mathrm{der}}=\mathrm{SL}_2^3 \times \mathrm{SL}_2^4 \hookrightarrow \mathrm{SL}_4$ by tensoring. For each $\mathrm{SL}_2$, as $\mathfrak{sl}_{2,\xi}$-mod, $\mathfrak{g}[1]=10V_2$. Then we have $(\mathrm{Q}_1,\mathrm{Q}_2)=(\mathrm{Q}_1^\tau,\mathrm{Q}_2^\tau)=(2,10)$. Thus, the orbit is quasi-admissible for $n=1,2$, raisable if $n \geq 3$.

\subsubsection{\dynkin[edge length=.20cm,labels={1,0,0,0,1,0,1,0},label]{E}{oooooooo} $E_6(a_3)+A_1$, $\mathrm{L}=\mathrm{SL}_4 \times \mathrm{SL}_2^1 \times \mathrm{SL}_2^2$.}
\begin{align*}
    \mathfrak{g}[2] &= \mathrm{F} \oplus (\mathrm{SL}_4 \otimes \mathrm{SL}_2^1) \oplus (\bigwedge^2\mathrm{SL}_4 \otimes \mathrm{SL}_2^2) \\
    \mathfrak{g}[1] &= (\mathrm{SL}_2^1 \otimes \mathrm{SL}_2^2) \oplus \mathrm{SL}_4^* \oplus (\bigwedge^2\mathrm{SL}_4 \otimes \mathrm{SL}_2^1)
\end{align*}
The same as \ref{3231}, we have $\mathbf{G}_{\gamma,\mathrm{der}}=\mathrm{1}$. 

\subsubsection{\dynkin[edge length=.20cm,labels={0,0,0,1,0,1,0,0},label]{E}{oooooooo} $E_7(a_5)$, $\mathrm{L}=\mathrm{SL}_{3}^1 \times \mathrm{SL}_2^1 \times \mathrm{SL}_2^2 \times \mathrm{SL}_3^2$.}
\begin{align*}
    \mathfrak{g}[2] &= \mathrm{SL}_{3}^1 \oplus ((\mathrm{SL}_3^1)^* \otimes \mathrm{SL}_2^1 \otimes \mathrm{SL}_3^2) \\
    \mathfrak{g}[1] &= (\mathrm{SL}_2^2 \otimes \mathrm{SL}_3^2) \oplus ((\mathrm{SL}_{3}^1)^* \otimes \mathrm{SL}_2^1 \otimes \mathrm{SL}_2^2)
\end{align*}
By computing dimension, we have $\mathbf{G}_{\gamma,\mathrm{der}}=\mathrm{SL}_{2,\alpha_5}$. As $\mathfrak{sl}_{2,\xi}$-mod, $\mathfrak{g}[1]=9V_2$. Then we have $(\mathrm{Q}_1,\mathrm{Q}_2)=(\mathrm{Q}_1^\tau,\mathrm{Q}_2^\tau)=(1,9)$. Thus, the orbit is quasi-admissible for $n=2$, raisable if $n \neq 2$.

\subsubsection{\dynkin[edge length=.20cm,labels={1,0,0,0,1,0,1,2},label]{E}{oooooooo} $D_5+A_1$, $\mathrm{L}=\mathrm{SL}_{4} \times \mathrm{SL}_2$.}
\begin{align*}
    \mathfrak{g}[2] &= 2\mathrm{F} \oplus \bigwedge^2 \mathrm{SL}_4 \oplus (\mathrm{SL}_{4} \otimes \mathrm{SL}_2) \\
    \mathfrak{g}[1] &= \mathrm{SL}_{4}^* \oplus \mathrm{SL}_2 \oplus (\bigwedge^2\mathrm{SL}_{4} \otimes \mathrm{SL}_2)
\end{align*}
The same as \ref{3214}, we have $\mathbf{G}_{\gamma,\mathrm{der}}=\Delta(\mathrm{SL}_{2})^a \times \mathrm{SL}_2^b$. As $\mathfrak{sl}_{2,\xi_a}$-mod, $\mathfrak{g}[1]=4V_1+4V_2+2V_3$. As $\mathfrak{sl}_{2,\xi_b}$-mod, $\mathfrak{g}[1]=8V_1+5V_2$. Then we have $(\mathrm{Q}_1^a,\mathrm{Q}_2^a)=(2,12),(\mathrm{Q}_1^a,\mathrm{Q}_2^a)=(\mathrm{Q}_1^\tau,\mathrm{Q}_2^\tau)=(1,5)$. Thus, the orbit is quasi-admissible for $n=2$, raisable if $n \neq 2$.

\subsubsection{\dynkin[edge length=.20cm,labels={2,0,0,0,0,2,0,0},label]{E}{oooooooo} $A_6$, $\mathrm{L}=\mathrm{Spin}_8 \times \mathrm{SL}_3$.}
\begin{align*}
    \mathfrak{g}[2] &= \mathrm{Spin}_8 \oplus (\mathrm{Spin}_8^\prime \otimes \mathrm{SL}_3) 
\end{align*}
The first summand gives generic isotropy subgroup $\mathrm{Spin}_7 \hookrightarrow \mathrm{Spin}_8$. Then we obtain prehomogeneous space $(\mathrm{Spin}_7 \times \mathrm{GL}_3,\; \mathrm{Spin}_7^\prime \otimes \mathrm{GL}_3)$. By Proposition~\ref{prop:restrict}, we have $\mathbf{G}_{\gamma,\mathrm{der}}=\mathrm{SL}_{2}^a \times \mathrm{SO}_3^b$, where $\mathrm{SL}_{2}^a \hookrightarrow \mathrm{Spin}_7$ by $\Lambda_1 \oplus \Lambda_1$, $\mathrm{SO}_3^b \hookrightarrow \mathrm{Spin}_7 \times \mathrm{SL}_3$ by $(\Lambda_1 \oplus \Lambda_1) \otimes \Lambda_1$. Then we have $(\mathrm{Q}_1^a,\mathrm{Q}_2^a)=(\mathrm{Q}_1^\tau,\mathrm{Q}_2^\tau)=(1,0),(\mathrm{Q}_1^b,\mathrm{Q}_2^b)=(8,0)$. Thus, the orbit is quasi-admissible for $n=1$, raisable if $n \geq 2$.

\subsubsection{\dynkin[edge length=.20cm,labels={0,1,1,0,0,0,1,2},label]{E}{oooooooo} $D_6(a_1)$, $\mathrm{L}=\mathrm{SL}_{2} \times \mathrm{SL}_4$.}
\begin{align*}
    \mathfrak{g}[2] &= 2\mathrm{F} \oplus \mathrm{SL}_2 \oplus (\mathrm{SL}_{2} \otimes \bigwedge^2\mathrm{SL}_4) \\
    \mathfrak{g}[1] &= \mathrm{SL}_4 \oplus \mathrm{SL}_4^* \oplus (\mathrm{SL}_{2} \otimes \mathrm{SL}_4)
\end{align*}
The same as \ref{3218}, we have $\mathbf{G}_{\gamma,\mathrm{der}}=\mathrm{SL}_{2} \times \mathrm{SL}_2$. As $\mathfrak{sl}_{2,\xi}$-mod, $\mathfrak{g}[1]=8V_2$. Then we have $(\mathrm{Q}_1,\mathrm{Q}_2)=(\mathrm{Q}_1^\tau,\mathrm{Q}_2^\tau)=(1,8)$. Thus, the orbit is quasi-admissible for $n=1$, raisable if $n \geq 2$.

\subsubsection{\dynkin[edge length=.20cm,labels={1,0,0,1,0,1,0,0},label]{E}{oooooooo} $A_6+A_1$, $\mathrm{L}=\mathrm{SL}_{2}^1 \times \mathrm{SL}_2^2 \times \mathrm{SL}_{2}^3 \times \mathrm{SL}_3$.}
\begin{align*}
    \mathfrak{g}[2] &= \mathrm{F} \oplus (\mathrm{SL}_{2}^1 \otimes \mathrm{SL}_{2}^3) \oplus (\mathrm{SL}_2^1 \otimes \mathrm{SL}_{2}^2 \otimes \mathrm{SL}_3) \\
    \mathfrak{g}[1] &= \mathrm{SL}_2^2 \oplus (\mathrm{SL}_{2}^1 \otimes \mathrm{SL}_2^2 \otimes \mathrm{SL}_{2}^3) \oplus (\mathrm{SL}_{2}^3 \otimes \mathrm{SL}_3)
\end{align*}
The same as \ref{3.33}, we have $\mathbf{G}_{\gamma,\mathrm{der}}=\mathrm{SL}_{2}$. As $\mathfrak{sl}_{2,\xi}$-mod, $\mathfrak{g}[1]=4V_2+2V_4$. Then we have $(\mathrm{Q}_1,\mathrm{Q}_2)=(7,24)$. Thus, the orbit is quasi-admissible for $n\mid 7$, the method for raisability does not work.

\subsubsection{\dynkin[edge length=.20cm,labels={0,0,0,1,0,1,0,2},label]{E}{oooooooo} $E_7(a_4)$, $\mathrm{L}=\mathrm{SL}_3 \times \mathrm{SL}_{2}^1 \times \mathrm{SL}_2^2 \times \mathrm{SL}_{2}^3$.}
\begin{align*}
    \mathfrak{g}[2] &= \mathrm{SL}_3^* \oplus \mathrm{SL}_{2}^3 \oplus (\mathrm{SL}_3 \otimes \mathrm{SL}_{2}^1 \times \mathrm{SL}_2^3) \\
    \mathfrak{g}[1] &= (\mathrm{SL}_2^2 \otimes \mathrm{SL}_{2}^3) \oplus (\mathrm{SL}_3 \otimes \mathrm{SL}_{2}^1 \otimes \mathrm{SL}_2^2)
\end{align*}
By computing dimension, we have $\mathbf{G}_{\gamma,\mathrm{der}}=\mathrm{SL}_{2,\alpha_5}$. As $\mathfrak{sl}_{2,\xi}$-mod, $\mathfrak{g}[1]=8V_2$. Then we have $(\mathrm{Q}_1,\mathrm{Q}_2)=(\mathrm{Q}_1^\tau,\mathrm{Q}_2^\tau)=(1,8)$. Thus, the orbit is quasi-admissible for $n=1$, raisable if $n \geq 2$.

\subsubsection{\dynkin[edge length=.20cm,labels={2,0,0,0,0,2,0,2},label]{E}{oooooooo} $E_6(a_1)$, $\mathrm{L}=\mathrm{Spin}_8 \times \mathrm{SL}_2$.}
\begin{align*}
    \mathfrak{g}[2] &= \mathrm{SL}_2 \oplus \mathrm{Spin}_8^\prime \oplus (\mathrm{Spin}_8 \otimes \mathrm{SL}_2)
\end{align*}
Let \(V_a = \mathrm{Spin}_8^\prime \oplus (\mathrm{Spin}_8 \otimes \mathrm{SL}_2)\) and let \(V_b=\mathrm{SL}_2\). By \ref{Spin8SOA1}, note that $V_b$ is trivial as $(\mathbf{G}_0)_{V_a}$-mod. By Proposition~\ref{prop:restrict}, we have $\mathbf{G}_{\gamma,\mathrm{der}}=\mathrm{SL}_{3}$. Then we have $(\mathrm{Q}_1,\mathrm{Q}_2)=(\mathrm{Q}_1^\tau,\mathrm{Q}_2^\tau)=(1,0)$. Thus, the orbit is quasi-admissible for $n=1$, raisable if $n \geq 2$.

\subsubsection{\dynkin[edge length=.20cm,labels={0,0,0,0,2,0,0,2},label]{E}{oooooooo} $D_5+A_2$, $\mathrm{L}=\mathrm{SL}_5 \oplus \mathrm{SL}_3$.}
\begin{align*}
    \mathfrak{g}[2] &= \mathrm{SL}_3^* \oplus (\mathrm{SL}_3 \otimes \bigwedge^2 \mathrm{SL}_5)
\end{align*}
By computing dimension, we have $\mathbf{G}_{\gamma,\mathrm{der}}=1$.

\subsubsection{\dynkin[edge length=.20cm,labels={2,1,1,0,0,0,1,2},label]{E}{oooooooo} $D_6$, $\mathrm{L}=\mathrm{SL}_{4}$.}
\begin{align*}
    \mathfrak{g}[2] &= 4\mathrm{F} \oplus \bigwedge^2\mathrm{SL}_4 \\
    \mathfrak{g}[1] &= 2\mathrm{SL}_4 \oplus \mathrm{SL}_4^*
\end{align*}
We have $\mathbf{G}_{\gamma,\mathrm{der}}=\mathrm{Sp}_{4} \hookrightarrow \mathrm{SL}_4$ by $\Lambda_1$. As $\mathfrak{sl}_{2,\xi}$-mod, $\mathfrak{g}[1]=6V_1+3V_2$. Then we have $(\mathrm{Q}_1,\mathrm{Q}_2)=(\mathrm{Q}_1^\tau,\mathrm{Q}_2^\tau)=(1,3)$. Thus, the orbit is quasi-admissible for $n=2$, raisable if $n \neq 2$.

\subsubsection{\dynkin[edge length=.20cm,labels={2,0,0,0,0,2,2,2},label]{E}{oooooooo} $E_6$, $\mathrm{L}=\mathrm{Spin}_8$.}
\begin{align*}
    \mathfrak{g}[2] &= 2\mathrm{F} \oplus \mathrm{Spin}_8 \oplus \mathrm{Spin}_8^\prime
\end{align*}
The same as \ref{SpinSO8}, we have $\mathbf{G}_{\gamma,\mathrm{der}}=\mathrm{G}_{2}$. Then we have $(\mathrm{Q}_1,\mathrm{Q}_2)=(\mathrm{Q}_1^\tau,\mathrm{Q}_2^\tau)=(1,0)$. Thus, the orbit is quasi-admissible for $n=1$, raisable if $n \geq 2$.

\subsubsection{\dynkin[edge length=.20cm,labels={1,0,0,1,0,1,0,1},label]{E}{oooooooo} $D_7(a_2)$, $\mathrm{L}=\mathrm{SL}_{2}^1 \times \mathrm{SL}_2^2 \times \mathrm{SL}_{2}^3 \times \mathrm{SL}_2^4$.}
\begin{align*}
    \mathfrak{g}[2] &= \mathrm{F} \oplus \mathrm{SL}_2^3 \oplus (\mathrm{SL}_{2}^1 \otimes \mathrm{SL}_2^3) \oplus (\mathrm{SL}_{2}^1 \otimes \mathrm{SL}_2^2 \otimes \mathrm{SL}_{2}^4)
\end{align*}
By computing dimension, we have $\mathbf{G}_{\gamma,\mathrm{der}}=1$.

\subsubsection{\dynkin[edge length=.20cm,labels={1,0,0,1,0,1,1,0},label]{E}{oooooooo} $A_7$, $\mathrm{L}=\mathrm{SL}_{2}^1 \times \mathrm{SL}_2^2 \times \mathrm{SL}_{2}^3 \times \mathrm{SL}_2^4$.}
\begin{align*}
    \mathfrak{g}[2] &= \mathrm{F} \oplus (\mathrm{SL}_{2}^1 \otimes \mathrm{SL}_2^2) \oplus (\mathrm{SL}_{2}^1 \otimes \mathrm{SL}_2^3) \oplus (\mathrm{SL}_{2}^3 \otimes \mathrm{SL}_2^4) \\
    \mathfrak{g}[1] &= \mathrm{SL}_2^2 \oplus \mathrm{SL}_{2}^3 \oplus \mathrm{SL}_2^4 \oplus (\mathrm{SL}_{2}^1 \otimes \mathrm{SL}_2^2 \otimes \mathrm{SL}_{2}^3)
\end{align*}
Similarly to \ref{dA1}, we have $\mathbf{G}_{\gamma,\mathrm{der}}=\Delta(\mathrm{SL}_{2})$. As $\mathfrak{sl}_{2,\xi}$-mod, $\mathfrak{g}[1]=5V_2+V_4$. Then we have $(\mathrm{Q}_1,\mathrm{Q}_2)=(4,15)$. Thus, the orbit is quasi-admissible for $n=8$, the method for raisability does not work.

\subsubsection{\dynkin[edge length=.20cm,labels={1,0,0,1,0,1,0,2},label]{E}{oooooooo} $E_6(a_1)+A_1$, $\mathrm{L}=\mathrm{SL}_{2}^1 \times \mathrm{SL}_2^2 \times \mathrm{SL}_{2}^3 \times \mathrm{SL}_2^4$.}
\begin{align*}
    \mathfrak{g}[2] &= \mathrm{F} \oplus \mathrm{SL}_2^4 \oplus (\mathrm{SL}_{2}^1 \otimes \mathrm{SL}_2^3) \oplus (\mathrm{SL}_{2}^1 \otimes \mathrm{SL}_2^2 \otimes \mathrm{SL}_2^4)
\end{align*}
By computing dimension, we have $\mathbf{G}_{\gamma,\mathrm{der}}=1$.

\subsubsection{\dynkin[edge length=.20cm,labels={2,0,0,1,0,1,0,2},label]{E}{oooooooo} $E_7(a_3)$, $\mathrm{L}=\mathrm{SL}_{2}^1 \times \mathrm{SL}_2^2 \times \mathrm{SL}_{2}^3 \times \mathrm{SL}_2^4$.}
\begin{align*}
    \mathfrak{g}[2] &= \mathrm{F} \oplus \mathrm{SL}_2^2 \oplus \mathrm{SL}_{2}^4 \oplus (\mathrm{SL}_{2}^1 \otimes \mathrm{SL}_2^2 \otimes \mathrm{SL}_{2}^4)\\
    \mathfrak{g}[1] &= (\mathrm{SL}_2^3 \otimes \mathrm{SL}_{2}^4) \oplus (\mathrm{SL}_{2}^1 \otimes \mathrm{SL}_2^2 \otimes \mathrm{SL}_{2}^3)
\end{align*}
By computing dimension, we have $\mathbf{G}_{\gamma,\mathrm{der}}=\mathrm{SL}_{2,\alpha_5}$. As $\mathfrak{sl}_{2,\xi}$-mod, $\mathfrak{g}[1]=6V_2$. Then we have $(\mathrm{Q}_1,\mathrm{Q}_2)=(\mathrm{Q}_1^\tau,\mathrm{Q}_2^\tau)=(1,6)$. Thus, the orbit is quasi-admissible for $n=1$, raisable if $n \geq 2$.

\subsubsection{\dynkin[edge length=.20cm,labels={2,0,0,0,2,0,0,2},label]{E}{oooooooo} $D_7(a_1)$, $\mathrm{L}=\mathrm{SL}_{3} \times \mathrm{SL}_4$.}
\begin{align*}
    \mathfrak{g}[2] &= \mathrm{SL}_{3} \oplus \mathrm{SL}_4 \oplus (\mathrm{SL}_{3}^* \otimes \bigwedge^2\mathrm{SL}_4)
\end{align*}
We have $\text{dim}\mathbf{G}_{\gamma}=15+8+3-25=1$, hence $\mathbf{G}_{\gamma,\mathrm{der}}=1$.

\subsubsection{\dynkin[edge length=.20cm,labels={1,0,0,1,0,1,2,2},label]{E}{oooooooo} $E_6+A_1$, $\mathrm{L}=\mathrm{SL}_{2}^1 \times \mathrm{SL}_2^2 \times \mathrm{SL}_{2}^3$.}
\begin{align*}
    \mathfrak{g}[2] &= 3\mathrm{F} \oplus (\mathrm{SL}_{2}^1 \otimes \mathrm{SL}_2^2) \oplus (\mathrm{SL}_{2}^1 \otimes \mathrm{SL}_2^3) \\
    \mathfrak{g}[1] &= \mathrm{SL}_{2}^2 \oplus \mathrm{SL}_2^3 \oplus (\mathrm{SL}_{2}^1 \otimes \mathrm{SL}_2^2 \otimes \mathrm{SL}_2^3)
\end{align*}
The same as \ref{dA1}, we have $\mathbf{G}_{\gamma,\mathrm{der}}=\Delta(\mathrm{SL}_{2})$. As $\mathfrak{sl}_{2,\xi}$-mod, $\mathfrak{g}[1]=4V_2+V_4$. Then we have $(\mathrm{Q}_1,\mathrm{Q}_2)=(3,14)$. Thus, the orbit is quasi-admissible for $n=1,3$, the method for raisability does not work.

\subsubsection{\dynkin[edge length=.20cm,labels={0,1,1,0,1,0,2,2},label]{E}{oooooooo} $E_7(a_2)$, $\mathrm{L}=\mathrm{SL}_{2}^1 \times \mathrm{SL}_2^2 \times \mathrm{SL}_{2}^3$.}
\begin{align*}
    \mathfrak{g}[2] &= \mathrm{F} \oplus \mathrm{SL}_{2}^1 \oplus 2\mathrm{SL}_2^3 \oplus (\mathrm{SL}_{2}^1 \otimes \mathrm{SL}_2^3)\\
    \mathfrak{g}[1] &= \mathrm{SL}_{2}^2 \oplus (\mathrm{SL}_2^1 \otimes \mathrm{SL}_{2}^2) \oplus (\mathrm{SL}_2^2 \otimes \mathrm{SL}_2^3)
\end{align*}
By computing dimension, we have $\mathbf{G}_{\gamma,\mathrm{der}}=\mathrm{SL}_{2,\alpha_4}$. As $\mathfrak{sl}_{2,\xi}$-mod, $\mathfrak{g}[1]=5V_2$. Then we have $(\mathrm{Q}_1,\mathrm{Q}_2)=(\mathrm{Q}_1^\tau,\mathrm{Q}_2^\tau)=(1,5)$. Thus, the orbit is quasi-admissible for $n=2$, raisable if $n \neq 2$.

\subsubsection{\dynkin[edge length=.20cm,labels={2,1,1,0,1,1,0,1},label]{E}{oooooooo} $D_7$, $\mathrm{L}=\mathrm{SL}_{2}^1 \times \mathrm{SL}_2^2$.}
\begin{align*}
    \mathfrak{g}[2] &= 5\mathrm{F} \oplus (\mathrm{SL}_{2}^1 \otimes \mathrm{SL}_2^2) \\
    \mathfrak{g}[1] &= 3\mathrm{SL}_{2}^1 \oplus 2\mathrm{SL}_2^2
\end{align*}
The same as \ref{3.34}, we have $\mathbf{G}_{\gamma,\mathrm{der}}=\Delta(\mathrm{SL}_{2})$. As $\mathfrak{sl}_{2,\xi}$-mod, $\mathfrak{g}[1]=5V_2$. The only choice of $\tau$ satisfies $(\text{C}1)-(\text{C}3)$ according to \cite{JLS16}. Then we have $(\mathrm{Q}_1,\mathrm{Q}_2)=(\mathrm{Q}_1^\tau,\mathrm{Q}_2^\tau)=(2,5)$. Thus, the orbit is quasi-admissible for $n=4$, raisable if $n \neq 4$.

\begin{table}[H]
\centering
\resizebox{\textwidth}{!}{%  % 缩放表格到页面宽度
\begin{tabular}{cccccc}
\hline
$\mathcal{O}$ & special/even? & $\mathbf{G}_{\gamma,\text{der}}$ & $(Q_1, Q_2)$ & quasi-admissible, if and only if & raisable if \\
\hline
$\{0\}$ & yes/yes & $\mathrm{E}_{8}$ & $(1,0)$ & $n=1$ & $n \geq 2$ \\
$A_1$ & yes/no & $\mathrm{E}_{7}$ & $(1,12)$ & $n=1$ & $n \geq 2$ \\
$2A_1$ & yes/no & $\mathrm{Spin}_{13}$ & $(1,16)$ & $n=1$ & $n \geq 2$ \\
$3A_1$ & no/no & $\mathrm{SL}_{2,\alpha_8} \times \mathrm{F}_4$ & $(1,27),(1,12)$ & no such n & all $n$ \\
$A_2$ & yes/yes & $\mathrm{E}_{6}$ & $(1,0)$ & $n=1$ & $n \geq 2$ \\
$4A_1$ & no/no & $\mathrm{Sp}_{8}$ & $(1,15)$ & $n=2$ & $n \neq 2$ \\
$A_2+A_1$ & yes/no & $\mathrm{SL}_{6}$ & $(1,10)$ & $n=1$ & $n \geq 2$ \\
$A_2+2A_1$ & yes/no & $\mathrm{Spin}_{7} \times \mathrm{SL}_{2}$ & $(1,12),(6,88)$ & $n=1$ & $n \geq 2$ \\
$A_3$ & yes/no & $\mathrm{Spin}_{11}$ & $(1,8)$ & $n=1$ & $n \geq 2$ \\
$A_2+3A_1$ & no/no & $\mathrm{SL}_{2,\alpha_1} \times \mathrm{G}_{2}$ & $(1,21),(2,20)$ & $n=2$ & $n \neq 2$ \\
$2A_2$ & yes/yes & $\mathrm{G}_{2} \times \mathrm{G}_{2}$ & $(1,0),(1,0)$ & $n=1$ & $n \geq 2$ \\
$2A_2+A_1$ & no/no & $\mathrm{SL}_{2} \times \mathrm{G}_2$ & $(3,26),(1,8)$ & $n=1$ & n.a. \\
$A_3+A_1$ & no/no & $\mathrm{SL}_{2,\alpha_7} \times \mathrm{Spin}_7$ & $(1,17),(1,8)$ & no such n & all $n$ \\
$D_4(a_1)$ & yes/yes & $\mathrm{Spin}_{8}$ & $(1,0)$ & $n=1$ & $n \geq 2$ \\
$D_4$ & yes/yes & $\mathrm{F}_{4}$ & $(1,0)$ & $n=1$ & $n \geq 2$ \\
$2A_2+2A_1$ & no/no & $\mathrm{Sp}_{4}$ & $(3,34)$ & $n=1,3$ & n.a.\\
$A_3+2A_1$ & no/no & $\mathrm{SL}_{2} \times \mathrm{Sp}_{4}$ & $(2,24),(1,9)$ & $n=2$ & $n \neq 2$ \\
$D_4(a_1)+A_1$ & yes/no & $\mathrm{SL}_{2} \times \mathrm{SL}_2 \times \mathrm{SL}_2$ & $(1,8),(1,8),(1,8)$ & $n=1$ & $n \geq 2$ \\
$A_3+A_2$ & yes/no & $\mathrm{Sp}_{4}$ & $(1,8)$ & $n=1$ & $n \geq 2$ \\
$A_4$ & yes/yes & $\mathrm{SL}_{5}$ & $(1,0)$ & $n=1$ & $n \geq 2$\\
$A_3+A_2+A_1$ & no/no & $\mathrm{SO}_{3} \times \mathrm{SL}_{2,\alpha_2}$ & $(24,160),(1,15)$ & $n=2$ & $n \neq 2$ \\
$D_4+A_1$ & no/no & $\mathrm{Sp}_{6}$ & $(1,7)$ & $n=2$ & $n \neq 2$ \\
$D_4(a_1)+A_2$ & yes/yes & $\mathrm{PGL}_{3}$ & $(6,0)$ & $n=1,2,3,6$ & $n \neq 1,2,3,6$ \\
$A_4+A_1$ & yes/no & $\mathrm{SL}_3$ & $(1,6)$ & $n=1$ & $n \geq 2$\\
$2A_3$ & no/no & $\mathrm{Sp}_{4}$ & $(2,15)$ & $n=4$ & n.a.\\
$D_5(a_1)$ & yes/no & $\mathrm{SL}_{4}$ & $(1,6)$ & $n=1$ & $n \geq 2$ \\
$A_4+2A_1$ & yes/no & $\mathrm{SL}_2$ & $(2,14)$ & $n=1,2$ & $n \geq 3$ \\
$A_4+A_2$ & yes/yes & $\mathrm{SL}_{2} \times \mathrm{SL}_{2}$ & $(1,0),(15,0)$ & $n=1$ & $n \geq 2$ \\
$A_5$ & no/no & $\mathrm{SL}_{2,\alpha_7} \times \mathrm{G}_2$ & $(1,9),(1,4)$ & no such n & all $n$ \\
$D_5(a_1)+A_1$ & yes/no & $\mathrm{SL}_{2,\alpha_2} \times \mathrm{SL}_2$ & $(1,12),(8,56)$ & $n=1$ & $n \geq 2$\\
$A_4+A_2+A_1$ & yes/no & $\mathrm{SL}_2$ & $(15,92)$ & $n|15$ & n.a.\\
$D_4+A_2$ & yes/yes & $\mathrm{SL}_3$& $(2,0)$ & $n=1,2$ & $n \geq 3$ \\
$E_6(a_3)$ & yes/yes & $\mathrm{G}_2$ & $(1,0)$ & $n=1$ & $n \geq 2$ \\
$D_5$ & yes/yes & $\mathrm{Spin}_7$ & $(1,0)$ & $n=1$ & $n \geq 2$ \\
$A_4+A_3$ & no/no & $\mathrm{SL}_2$ & $(10,68)$ & $n \mid 10$ & n.a. \\
$A_5+A_1$ & no/no & $\mathrm{SL}_{2} \times \mathrm{SL}_2$ & $(1,5),(3,22)$ & no such n & $n \neq 2$ \\
$D_5(a_1)+A_2$ & no/no & $\mathrm{SL}_2$ & $(6,35)$ & $n=4,12$ & n.a.\\
$D_6(a_2)$ & no/no & $\mathrm{SL}_2 \times \mathrm{SL}_2$ & $(2,10),(2,10)$ & $n=1,2$ & $n \geq 3$\\
$E_6(a_3)+A_1$ & no/no & $\mathrm{1}$& n.a. & all n & n.a. \\
$E_7(a_5)$ & no/no & $\mathrm{SL}_{2,\alpha_5}$ & $(1,9)$ & $n=2$ & $n \neq 2$ \\
$D_5+A_1$ & no/no & $\mathrm{SL}_{2} \times \mathrm{SL}_2 $ & $(2,12),(1,5)$ & $n=2$ & $n \neq 2$ \\
$E_8(a_7)$ & yes/yes & $\mathrm{1}$& n.a. & all n & n.a. \\
$A_6$ & yes/yes & $\mathrm{SL}_{2} \times \mathrm{SO}_3 $ & $(1,0),(8,0)$ & $n=1$ & $n \geq 2$ \\
$D_6(a_1)$ & yes/no & $\mathrm{SL}_{2} \times \mathrm{SL}_2$ & $(1,8),(1,8)$ & $n=1$ & $n \geq 2$ \\
$A_6+A_1$ & no/no & $\mathrm{SL}_{2}$ & $(7,24)$ & $n\mid7$ & n.a. \\
$E_7(a_4)$ & yes/no & $\mathrm{SL}_{2,\alpha_5}$ & $(1,8)$ & $n=1$ & $n \geq 2$ \\
$E_6(a_1)$ & yes/yes & $\mathrm{SL}_{3}$ & $(1,0)$ & $n=1$ & $n \geq 2$ \\
$D_5+A_2$ & yes/yes & $\mathrm{1}$& n.a. & all n & n.a. \\
$D_6$ & no/no & $\mathrm{Sp}_{4}$ & $(1,3)$ & $n=2$ & $n \neq 2$ \\
$E_6$ & yes/yes & $\mathrm{G}_{2}$ & $(1,0)$ & $n=1$ & $n \geq 2$ \\
$D_7(a_2)$ & yes/no & $\mathrm{1}$& n.a. & all n & n.a. \\
$A_7$ & no/no & $\mathrm{SL}_{2}$ & $(4,15)$ & $n=8$ & n.a. \\
$E_6(a_1)+A_1$ & yes/no & $\mathrm{1}$& n.a. & all n & n.a. \\
$E_7(a_3)$ & yes/no & $\mathrm{SL}_{2,\alpha_5}$ & $(1,6)$ & $n=1$ & $n \geq 2$ \\
$E_8(b_6)$ & yes/yes & $\mathrm{1}$& n.a. & all n & n.a. \\
$D_7(a_1)$ & yes/yes & $\mathrm{1}$& n.a. & all n & n.a. \\
$E_6+A_1$ & no/no & $\mathrm{SL}_{2}$ & $(3,14)$ & $n=1,3$ & n.a. \\
\hline
\end{tabular}
}
\caption{Nilpotent orbits for \( E_8 \)} \label{tab:e8}
\end{table}

\begin{table}[H]
\centering
\resizebox{\textwidth}{!}{%  % 缩放表格到页面宽度
\begin{tabular}{cccccc}
\hline
$\mathcal{O}$ & special/even? & $\mathbf{G}_{\gamma,\text{der}}$ & $(Q_1, Q_2)$ & quasi-admissible, if and only if & raisable if \\
\hline
$E_7(a_2)$ & no/no & $\mathrm{SL}_{2,\alpha_4}$ & $(1,5)$ & $n=2$ & $n \neq 2$ \\
$E_8(a_6)$ & yes/yes & $\mathrm{1}$& n.a. & all n & n.a. \\
$D_7$ & no/no & $\mathrm{SL}_{2}$ & $(2,5)$ & $n=4$ & $n \neq 4$ \\
$E_8(b_5)$ & yes/yes & $\mathrm{1}$& n.a. & all n & n.a. \\
$E_7(a_1)$ & yes/no & $\mathrm{SL}_{2,\alpha_4}$ & $(1,4)$ & $n=1$ & $n \geq 2$ \\
$E_8(a_5)$ & yes/yes & $\mathrm{1}$& n.a. & all n & n.a. \\
$E_8(b_4)$ & yes/yes & $\mathrm{1}$& n.a. & all n & n.a. \\
$E_7$ & no/no & $\mathrm{SL}_{2,\alpha_4}$ & $(1,3)$ & $n=2$ & $n \neq 2$ \\
$E_8(a_4)$ & yes/yes & $\mathrm{1}$& n.a. & all n & n.a. \\
$E_8(a_3)$ & yes/yes & $\mathrm{1}$& n.a. & all n & n.a. \\
$E_8(a_2)$ & yes/yes & $\mathrm{1}$& n.a. & all n & n.a. \\
$E_8(a_1)$ & yes/yes & $\mathrm{1}$& n.a. & all n & n.a. \\
$E_8$ & yes/yes & $\mathrm{1}$& n.a. & all n & n.a. \\
\hline
\end{tabular}
}
\caption{Nilpotent orbits for \( \overline{E_8}^{(n)} \)(continued)} \label{tab:e8c}
\end{table}

\subsubsection{\dynkin[edge length=.20cm,labels={2,1,1,0,1,0,2,2},label]{E}{oooooooo} $E_7(a_1)$, $\mathrm{L}=\mathrm{SL}_{2}^1 \times \mathrm{SL}_2^2$.}
\begin{align*}
    \mathfrak{g}[2] &= 3\mathrm{F} \oplus 3\mathrm{SL}_2^2 \\
    \mathfrak{g}[1] &= 2\mathrm{SL}_{2}^1 \oplus (\mathrm{SL}_{2}^1 \otimes \mathrm{SL}_2^2)
\end{align*}
By computing dimension, we have $\mathbf{G}_{\gamma,\mathrm{der}}=\mathrm{SL}_{2}^1=\mathrm{SL}_{2,\alpha_4}$. As $\mathfrak{sl}_{2,\xi}$-mod, $\mathfrak{g}[1]=4V_2$. Then we have $(\mathrm{Q}_1,\mathrm{Q}_2)=(\mathrm{Q}_1^\tau,\mathrm{Q}_2^\tau)=(1,4)$. Thus, the orbit is quasi-admissible for $n=1$, raisable if $n \geq 2$.

\subsubsection{\dynkin[edge length=.20cm,labels={2,1,1,0,1,2,2,2},label]{E}{oooooooo} $E_7$, $\mathrm{L}=\mathrm{SL}_{2,\alpha_4}$.}
\begin{align*}
    \mathfrak{g}[2] &= 7\mathrm{F} \\
    \mathfrak{g}[1] &= 3\mathrm{SL}_{2,\alpha_4}
\end{align*}
We have $\mathbf{G}_{\gamma,\mathrm{der}}=\mathrm{SL}_{2,\alpha_4}$. As $\mathfrak{sl}_{2,\xi}$-mod, $\mathfrak{g}[1]=3V_2$. Then we have $(\mathrm{Q}_1,\mathrm{Q}_2)=(\mathrm{Q}_1^\tau,\mathrm{Q}_2^\tau)=(1,3)$. Thus, the orbit is quasi-admissible for $n=2$, raisable if $n \neq 2$.

\section{Proof of Theorem~\ref{Main result}}

\subsection{Preliminaries for classical case}

Let \(\overline{G}^\vee\) be the complex dual group of the covering group \(\overline{G}^{(n)}\) (see \cite{Weissman18}). Denote by \(\mathcal{N}(\overline{G}^\vee)\) the set of nilpotent orbits in \(\operatorname{Lie}(\overline{G}^\vee)\) under conjugation by \(\overline{G}^\vee\), and by \(\mathcal{N}(\mathbf{G})\) the set of geometric (stable) nilpotent orbits. The \textit{covering Barbasch--Vogan duality} is a map
\[
d_{\mathrm{BV},G}^{(n)} \colon \mathcal{N}(\overline{G}^\vee) \longrightarrow \mathcal{N}(\mathbf{G}),
\]
whose explicit construction is given in \cite{GLLS26}. In this subsection we recall the concrete description for classical groups.

Write a partition as \(\mathfrak{p}=[p_1,p_2,\dots ,p_k]\) with \(p_1 \ge p_2 \ge \cdots \ge p_k > 0\). Define \(\mathfrak{p}^+ := [p_1+1,p_2,\dots ,p_k]\) and \(\mathfrak{p}^- := [p_1,\dots ,p_{k-1},p_k-1]\). For a type \(X \in \{B,C,D\}\) we let \(\mathfrak{p}_X\) denote the \(X\)-collapse of \(\mathfrak{p}\). Following the notation of \cite{GLLS26}, we often omit parentheses between superscripts and subscripts; for example, \(\mathfrak{p}_{D\;\;\; B}^{\;\;\; +\;\;\; -}\) stands for \((((\mathfrak{p}_D)^+)_B)^-\).

\subsubsection{Type \(A\)} \label{def:CBV-A}
Consider \(G = \mathrm{GL}_{r}\). Let \(\overline{G}\) be an \(n\)-fold cover associated with a quadratic form \(Q\) satisfying \(Q(\alpha^\vee)=1\) for every simple coroot of \(G\). Its dual group \(\overline{G}^\vee\) is also of type \(A\). For an orbit \(\mathcal{O}_\mathfrak{p} \subset \overline{G}^\vee\) corresponding to a partition \(\mathfrak{p}\) and for any integer \(m \ge 1\) we introduce the partition
\[
\mathfrak{s}(m;n) := (n^{a}b) = (n, n, \ldots, n, b),
\]
where \(m = na + b\) with \(0 \le b < n\). Then
\[
d_{\mathrm{BV},G}^{(n)}(\mathfrak{p}) = d^{(n)}_{\mathrm{com},A}(\mathfrak{p}) := \sum_{i=1}^{k} \mathfrak{s}(p_i;n),
\]
the sum being taken componentwise.

\subsubsection{Type \(B\)} \label{def:CBV-B}
Consider \(G = \mathrm{SO}_{2r+1}\). Let \(\overline{G}\) be the \(n\)-fold cover attached to the unique quadratic form \(Q\) with \(Q(\alpha^\vee)=2\) for all short coroots of \(G\). Set \(n^* := n/\gcd(n,2)\). The dual group is
\[
\overline{G}^{\vee} \simeq 
\begin{cases}
\operatorname{Sp}_{2r} & \text{if } n^* \text{ is odd}, \\[2pt]
\operatorname{SO}_{2r+1} & \text{if } n^* \text{ is even}.
\end{cases}
\]
For an orbit of \(\overline{G}^{\vee}\) represented by a partition \(\mathfrak{p}\),
\begin{equation*}
d_{\text{BV},G}^{(n)}(\mathfrak{p}) =
\begin{cases}
d_{\mathrm{com},A}^{(n^*)}(\mathfrak{p})^+_{\;\;\; B} & \text{if $n^*$ is odd;} \\
d_{\mathrm{com},A}^{(n^*)}(\mathfrak{p})_{B} & \text{if $n^*$ is even.}
\end{cases}
\end{equation*}

\subsubsection{Type \(C\)} \label{def:CBV-C}
Consider \(G = \mathrm{Sp}_{2r}\). Let \(\overline{G}\) be the \(n\)-fold cover corresponding to the unique \(Q\) such that \(Q(\alpha^\vee)=1\) for the short coroot of \(G\). Then
\[
\overline{G}^{\vee} =
\begin{cases}
\operatorname{SO}_{2r+1} & \text{if } n \text{ is odd}, \\[2pt]
\operatorname{Sp}_{2r} & \text{if } n \text{ is even}.
\end{cases}
\]
Given any orbit of $\overline{G}^{\vee}$ represented by a partition $\mathfrak{p}$, we have
\begin{equation*}
d_{\text{BV},G}^{(n)}(\mathfrak{p}) =
\begin{cases}
d_{\mathrm{com},A}^{(n)}(\mathfrak{p})^-_{\;\;\; C} & \text{if $n$ is odd;} \\
d_{\mathrm{com},A}^{(n/2)}(\mathfrak{p})^{+-}_{\;\;\;\;\;\; C} & \text{if $n$ is even with $n/2$ odd;} \\
d_{\mathrm{com},A}^{(n/2)}(\mathfrak{p})_{C} & \text{if $n$ is even with $n/2$ even.}
\end{cases}
\end{equation*}

\subsubsection{Type \(D\)} \label{def:CBV-D}
Consider \(G = \mathrm{SO}_{2r}\). Let \(\overline{G}\) be the \(n\)-fold cover associated with the unique quadratic form \(Q\) satisfying \(Q(\alpha^\vee)=2\) for every coroot of \(G\). Its dual group is
\[
\overline{G}^{\vee} \simeq \operatorname{SO}_{2r}.
\]
For each partition $\mathfrak{p}$ of $\overline{G}^{\vee}$, we have
\begin{equation*}
d_{\text{BV},G}^{(n)}(\mathfrak{p}) =
d_{\mathrm{com},A}^{(n^*)}(\mathfrak{p})_{D}
\end{equation*}
For the very even orbits the definition requires a slightly more refined prescription; since this refinement does not affect our proofs, we omit the details.

We recall the explicit criteria for quasi‑admissibility of \(F\)-split nilpotent orbits in classical groups, as obtained in \cite{GLT25}.

\begin{proposition} \label{prop:QAA}
    Retain the setting in \ref{def:CBV-A}. Let \(\mathcal{O}\) be a \(F\)-split nilpotent orbit of type \(A\) whose Jordan type is given by a partition
    \[
    \mathfrak{p} = [p_1^{d_1}, \dots , p_k^{d_k}, q_1, \dots , q_m],
    \]
    where the \(p_i\) are distinct positive integers with multiplicities \(d_i\ge 2\) and the \(q_j\) are distinct positive integers appearing only once. Then \(\mathcal{O}\) is \(\overline{G}^{(n)}\)-quasi‑admissible if and only if \(n\) divides \(p_i\) for every \(i\).
\end{proposition}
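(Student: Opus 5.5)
The statement is Proposition~\ref{prop:QAA}: for an $F$-split nilpotent orbit of Jordan type $\mathfrak{p}$ in $\mathrm{GL}_r$, quasi-admissibility holds if and only if $n$ divides every repeated part $p_i$. I would prove it by computing the invariants $(\mathrm{Q}_1,\mathrm{Q}_2)$ of Proposition~\ref{prop:quasi} for each almost-simple factor of $\mathbf{G}_{\gamma,\mathrm{der}}$, and then intersecting the resulting conditions as in Proposition~\ref{prop:product}.

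\textbf{Step 1: the centralizer.} For a nilpotent element of Jordan type $\mathfrak{p}$ in $\mathfrak{gl}_r$, the reductive centralizer of the $\mathfrak{sl}_2$-triple is
\[
\mathbf{G}_\gamma \cong \prod_i \mathrm{GL}_{d_i} \times \prod_j \mathrm{GL}_1 .
\]
Here $\mathrm{GL}_{d_i}$ acts on the multiplicity space of $V_{p_i}$ inside $\mathrm{std}=\bigoplus_i V_{p_i}\otimes F^{d_i}\oplus \bigoplus_j V_{q_j}$. Hence $\mathbf{G}_{\gamma,\mathrm{der}}=\prod_i \mathrm{SL}_{d_i}$, where the product runs only over parts with $d_i\ge 2$. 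Each factor is simply connected, so $\mathrm{Ker}\,\mathbf{f}$ is trivial and Proposition~\ref{prop:quasi} applies with no coprimality issue. Note that the split $\mathrm{GL}_1$ factors and the centre do not enter the criterion.

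\textbf{Step 2: the invariant $\mathrm{Q}_1$.} The coroot of $\mathrm{SL}_{d_i}$ embeds into $\mathrm{GL}_r$ as $p_i$ copies of a coroot of $\mathrm{GL}_r$, one in each weight slot of $V_{p_i}$. That is, its image is a sum of $p_i$ mutually orthogonal roots $e_a-e_b$. By the functoriality proposition for $\operatorname{Inv}_{\mathrm{BD}}$, together with $Q(\alpha^\vee)=1$ and Weyl invariance (so that the bilinear form $B_Q$ vanishes on orthogonal coroots), this gives $\mathrm{Q}_1=p_i$.

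\textbf{Step 3: the invariant $\mathrm{Q}_2$.} Restrict $\mathfrak{g}[1]\subset\mathfrak{gl}_r=\mathrm{std}\otimes\mathrm{std}^*$ to $\mathfrak{sl}_{2,\xi}$. The weight-$1$ part of $V_a\otimes V_b$ is nonzero only when $a\not\equiv b \pmod 2$, and it has dimension $\min(a,b)$. The $V_2$-summands for $\mathfrak{sl}_{2,\xi}$ come from the pairs formed by the $p_i$-block with every other block of opposite parity, counted with multiplicities. The $p_i$-block with itself contributes nothing, since it has even $h$-weights only.

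The key point is that $\mathrm{Q}_2$ equals the number of such $V_2$'s, and each pair $(a,b)$ contributes the $\mathrm{std}$-part and the $\mathrm{std}^*$-part symmetrically. Because of this doubling, $\mathrm{Q}_2$ is even. So case (i) of Proposition~\ref{prop:quasi} always occurs, and the factor condition is $n\mid p_i$. Intersecting over $i$ by Proposition~\ref{prop:product} yields the claim. When there are no repeated parts the derived group is trivial and the orbit is quasi-admissible for all $n$, which is consistent with the vacuous condition.

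\textbf{Main obstacle.} The delicate step is the parity of $\mathrm{Q}_2$. The pullback of the metaplectic cover along $\phi$ has BD invariant given by $\sum_k \tfrac{k(k^2-1)}{6}\cdot m_k$-type contributions for $V_k$-summands, not merely by the count of $V_2$'s. I would therefore first check the $\mathfrak{sl}_{2,\xi}$-decomposition of $\mathfrak{g}[1]$ carefully, including the $V_1$-summands with multiplicity. I would then verify that the symplectic space $\mathfrak{g}[1]$ is a direct sum $W\oplus W^*$ of Lagrangian $\mathbf{G}_\gamma$-stable pieces, one built from $\mathrm{std}$-contributions and one from $\mathrm{std}^*$-contributions. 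On such a sum $\phi$ factors through a Siegel Levi $\mathrm{GL}(W)$, which forces the metaplectic pullback to have even invariant and gives $2\mid\mathrm{Q}_2$ directly.
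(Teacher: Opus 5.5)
Your proof is correct. The paper gives no argument of its own for Proposition~\ref{prop:QAA}: it simply imports the type~$A$ criterion from \cite{GLT25}. Your proposal is therefore a self-contained derivation through Propositions~\ref{prop:quasi} and~\ref{prop:product}, the same $(\mathrm{Q}_1,\mathrm{Q}_2)$ machinery the paper applies case by case in Section~\ref{sec:E}.

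The ``main obstacle'' you flag is already settled by your own Step~3. Write $M_{p_i}=F^{d_i}$ for the multiplicity space and $m_\mu$ for the multiplicity of a part $\mu$. The $(p_i,p_i)$-block has only even $h$-weights, so $M_{p_i}\otimes M_{p_i}^*$, the only possible source of a $V_3$, never meets $\mathfrak{g}[1]$. Every other block involving $p_i$ is $M_{p_i}$ or $M_{p_i}^*$ tensored with an $\mathfrak{sl}_{2,\xi}$-trivial space. Hence $\mathfrak{g}[1]$ consists of $V_1$'s and $V_2$'s only. For $\mathfrak{g}[1]=\bigoplus_k c_kV_k$, the general weight $\sum_k c_k\binom{k+1}{3}$ is the normalization behind the tables ($V_2\mapsto 1$, $V_3\mapsto 4$, $V_4\mapsto 10$). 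In type~$A$ it therefore collapses to $\mathrm{Q}_2=2\sum_{\mu}\min(p_i,\mu)\,m_\mu$, the sum running over the parts $\mu$ of parity opposite to $p_i$.

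Your Siegel-Levi alternative also works. The trace-form pairing matches the $(\lambda,\mu)$-block only with the $(\mu,\lambda)$-block. So the sum of the $(\lambda,\mu)$-blocks with $\lambda>\mu$ is a $\mathbf{G}_\gamma$-stable Lagrangian $W$. The weights $w$ of the coroot on $W$ sum to zero, so $\mathrm{Q}_2=\sum w^2$ is even.

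What your route adds over the bare citation is an explanation of why type~$A$, unlike Proposition~\ref{prop:QABCD}, has no ``$\gcd(n,p_i)=n/2$'' branch. Because $\mathfrak{g}[1]$ always splits into $\mathrm{std}$- and $\mathrm{std}^*$-halves, $\mathrm{Q}_2$ is always even and only case~(i) of Proposition~\ref{prop:quasi} ever occurs.
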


For the other classical types we write a partition in the form
\[
\mathfrak{p}_{\mathcal{O}} = [p_1^{d_1},\dots ,p_i^{d_i},\dots ,p_k^{d_k},\; q_1^{e_1},\dots ,q_j^{e_j},\dots ,q_m^{e_m}],
\]
where the \(p_i\) are distinct even numbers, the \(q_j\) are distinct odd numbers, and all exponents \(d_i, e_j\) are positive. For such a partition and a positive integer \(p\) we define
\[
\mathfrak{A}(\mathfrak{p}, p) = \sum_{\substack{i:\; p_i > p,\\[1pt] p_i-p+1 \in 2\mathbb{Z}}} d_i, \qquad 
\mathfrak{B}(\mathfrak{p}, p) = \sum_{\substack{i:\; p_i < p,\\[1pt] p_i-p+1 \in 2\mathbb{Z}}} d_i .
\]

\begin{proposition} \label{prop:QABCD}
    Retain the setting in \ref{def:CBV-B}, \ref{def:CBV-C} and \ref{def:CBV-D}. Let \(\mathcal{O}\) be a \(F\)-split nilpotent orbit with partition \(\mathfrak{p}_{\mathcal{O}}\) as above.
    \begin{enumerate}[label=(\roman*)]
        \item If \(\mathcal{O}\) is of type \(B\) or \(D\), it is \(\overline{G}^{(n)}\)-quasi‑admissible if and only if for every \(i\) with \(d_i\ge 2\) and every \(j\) with \(e_j\ge 3\) the following conditions hold:
        \begin{itemize}
            \item if \(n\) is odd: \(n \mid p_i\) and \(\mathfrak{B}(\mathfrak{p}_{\mathcal{O}}, p_i) \in 2\mathbb{Z}\);
            \item if \(n\) is even and \(\mathfrak{B}(\mathfrak{p}_{\mathcal{O}}, p_i) \in 2\mathbb{Z}\): \(n \mid p_i\);
            \item if \(n\) is even and \(\mathfrak{B}(\mathfrak{p}_{\mathcal{O}}, p_i) \notin 2\mathbb{Z}\): \(\gcd(n, p_i)=n/2\);
            \item if \(n\) is odd: \(n \mid q_j\);
            \item if \(n\) is even and \(e_j\ge 4\): \(n \mid (2q_j)\);
            \item if \(n\) is even and \(e_j = 3\): \(n \mid (4q_j)\).
        \end{itemize}
        
        \item If \(\mathcal{O}\) is of type \(C\), it is \(\overline{G}^{(n)}\)-quasi‑admissible if and only if for every \(i\) with \(d_i\ge 3\) and every \(j\) with \(e_j\ge 2\) the following conditions hold:
        \begin{itemize}
            \item if \(n\) is odd: \(n \mid q_j\) and \(\mathfrak{A}(\mathfrak{p}_{\mathcal{O}}, q_j) \in 2\mathbb{Z}\);
            \item if \(n\) is even and \(\mathfrak{A}(\mathfrak{p}_{\mathcal{O}}, q_j) \in 2\mathbb{Z}\): \(n \mid q_j\);
            \item if \(n\) is even and \(\mathfrak{A}(\mathfrak{p}_{\mathcal{O}}, q_j) \notin 2\mathbb{Z}\): \(\gcd(n, q_j)=n/2\);
            \item if \(n\) is odd: \(n \mid p_i\);
            \item if \(n\) is even and \(d_i\ge 4\): \(n \mid (2p_i)\);
            \item if \(n\) is even and \(d_i = 3\): \(n \mid (4p_i)\).
        \end{itemize}
    \end{enumerate}
\end{proposition}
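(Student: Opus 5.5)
The plan is to apply Proposition~\ref{prop:quasi}, together with Proposition~\ref{prop:product}, factor by factor to the reductive centraliser of an $F$-split nilpotent in a classical group. Write $V$ for the standard representation of $G$, and decompose it under $\gamma$ as $V=\bigoplus_p V_p\otimes W_p$. Here $W_p$ is the multiplicity space of the $p$-dimensional irreducible $\mathfrak{sl}_2$-module, and $\dim W_p$ is $d_i$ or $e_j$.

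\textbf{Structure of $\mathbf{G}_\gamma$.} In types $B$ and $D$, $V$ is orthogonal, so $W_{q}$ is orthogonal for odd $q$ and symplectic for even $p$. Hence
\[
\mathbf{G}_\gamma \cong \textstyle\prod_i \mathrm{Sp}_{d_i}\times \prod_j \mathrm{O}_{e_j},
\]
and $\mathbf{G}_{\gamma,\mathrm{der}}$ is the product of the $\mathrm{Sp}_{d_i}$ with $d_i\ge 2$ and the $\mathrm{SO}_{e_j}$ with $e_j\ge 3$. The factors $\mathrm{SO}_1$ and $\mathrm{SO}_2$ contribute no derived part, which explains the restriction on the indices in the statement. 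In type $C$ the roles of parities are swapped: $\mathrm{Sp}$ factors come from odd parts, $\mathrm{O}$ factors from even parts, with thresholds $e_j\ge 2$ and $d_i\ge 3$.

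\textbf{Computing $\mathrm{Q}_1$.} For each factor I would compute $\mathrm{Q}_1$ by the functoriality of $\operatorname{Inv}_{\mathrm{BD}}$. The factor acting on $W_p$ sits in $\mathrm{SL}(V)$ via $V_p\otimes W_p$. So a short coroot of the factor maps to $p$ copies of the corresponding coroot of $\mathrm{SL}(V)$, and $Q$ scales accordingly. For $\mathrm{Sp}_{d_i}\subset \mathrm{SO}$ with $Q(\alpha^\vee)=2$ this gives $\mathrm{Q}_1=p_i$. For $\mathrm{SO}_{e_j}$ the short coroot is long in the standard representation, giving $\mathrm{Q}_1=2q_j$ up to the normalisation fixed in \ref{def:CBV-B} and \ref{def:CBV-D}. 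This is the source of the divisibility conditions $n\mid p_i$, $n\mid q_j$ and $n\mid 2q_j$.

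\textbf{Computing $\mathrm{Q}_2$.} I would write $\mathfrak g=\wedge^2V$ (or $S^2V$ in type $C$) and extract the $h$-weight-$1$ part. Weight $1$ occurs only in $V_a\otimes V_b$ with $a\not\equiv b \pmod 2$. Restricting $\mathfrak{g}[1]$ to $\mathfrak{sl}_{2,\xi}$ for $\mathrm{Sp}_{d_i}$ therefore picks up $W_{p_i}\otimes W_b$ over the parts $b$ of opposite parity. The parity of $\mathrm{Q}_2$ reduces to the parity of the count of such $b$ with a fixed inequality relative to $p_i$, because only one of each pair contributes a copy of $V_2$ in weight $1$. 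This count is exactly $\mathfrak{B}(\mathfrak p,p_i)$, or $\mathfrak A(\mathfrak p,q_j)$ in type $C$. Proposition~\ref{prop:quasi}(i)/(ii) then gives the dichotomy: $n\mid p_i$ when the count is even, and $\gcd(n,p_i)=n/2$ when it is odd. For odd $n$ the second case is impossible, which forces evenness. For the orthogonal factors $\mathrm{Q}_2$ is automatically even, since $\mathfrak{sl}_{2,\xi}$ is a long-root $\mathfrak{sl}_2$ of $\mathrm{SO}_{e_j}$ and the weight-$1$ pieces occur in pairs.

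\textbf{Main obstacle: the orthogonal factors.} The step I expect to be hardest is the $\mathrm{SO}_{e_j}$ factors, where $\mathbf f$ has kernel of order $2$, or $4$ for $\mathrm{SO}_4$. When $n$ is even, the coprimality hypothesis of Proposition~\ref{prop:quasi} fails. There I would invoke Lemma~\ref{lem:SO} directly on $\overline{\mathrm{SO}}_{e_j}^{(n,2)}$, computing its BD invariant, rather than passing to $\mathrm{Spin}$. The resulting condition is $n\mid 2q_j$ for $e_j\ge 4$. The exceptional case $e_j=3$ gives the weaker $n\mid 4q_j$, because $\mathrm{SO}_3\cong\mathrm{PGL}_2$ has its short coroot equal to twice a generator of its cocharacter lattice, doubling the admissible divisor. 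I would check the normalisations against the $\mathrm{SO}_3$ examples in Section~\ref{sec:E}, such as \ref{A42S3}. Type $A$ (Proposition~\ref{prop:QAA}) is the degenerate version of the same argument: the centraliser is $\prod\mathrm{GL}_{d_i}$, $\mathfrak g[1]$ carries no $V_2$ obstruction, and $\mathrm{Q}_1=p_i$ gives $n\mid p_i$.
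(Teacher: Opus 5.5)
The paper does not prove this proposition; it imports it from \cite{GLT25}. Your plan uses the same tools as that result: the derived centraliser $\prod\mathrm{Sp}_{d_i}\times\prod\mathrm{SO}_{e_j}$, $\mathrm{Q}_1$ by functoriality, the parity of $\mathrm{Q}_2$ read off from $\mathfrak{g}[1]$, then Propositions~\ref{prop:quasi}, \ref{prop:product} and Lemma~\ref{lem:SO}. Your values $\mathrm{Q}_1=p_i,\,2q_j,\,4q_j$ (resp.\ $q_j,\,2p_i,\,4p_i$ in type $C$) are correct.

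\textbf{The gap.} You apply Proposition~\ref{prop:quasi} to the orthogonal factors when $n$ is odd, on the belief that coprimality fails only for even $n$. But $n^*=\mathrm{lcm}(n,2)$ is \emph{always} even, because the metaplectic $\mu_2$ is always present. Also $\ker(\mathrm{Spin}_{e}\to\mathrm{SO}_{e})\cong\mu_2$ for every $e\ge 3$. So for every $\mathrm{SO}_{e_j}$ factor and every $n$, only the \emph{only if} half of Proposition~\ref{prop:quasi} is available.

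The repair is to run the Lemma~\ref{lem:SO} argument for all $n$. That lemma is about $\mu_m$-covers, not the fibre product, so you need one reduction step:
\begin{enumerate}
\item Since $2\mid\mathrm{Q}_2$, Lemma~\ref{lem:SO} with $m=2$ gives a finite-dimensional genuine representation $\sigma$ of $\overline{\mathrm{SO}}_{e_j}^{(2),\phi}$.
\item Tensoring with $\sigma$ turns $\mu_n$-genuine finite-dimensional representations of $\overline{\mathrm{SO}}_{e_j}^{(n)}$ into $(n,2)$-genuine ones of the fibre product. Conversely, tensoring an $(n,2)$-genuine one with $\sigma^\vee$ kills the $\mu_2$-action, so it descends to a $\mu_n$-genuine representation of $\overline{\mathrm{SO}}_{e_j}^{(n)}$.
\item Hence the condition for the factor is exactly $n\mid\mathrm{Q}_1$. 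This gives all three odd-part conditions at once: $n\mid 2q_j$ or $n\mid 4q_j$, which reduces to $n\mid q_j$ for odd $n$.
\end{enumerate}
The $\mathrm{Sp}$ factors are simply connected, so there your use of Proposition~\ref{prop:quasi} is fine for all $n$.

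\textbf{Smaller points.} The kernel for $\mathrm{SO}_4$ has order $2$, not $4$. Moreover $\mathrm{SO}_4\cong(\mathrm{SL}_2\times\mathrm{SL}_2)/\mu_2$ is not almost simple, so Proposition~\ref{prop:product} cannot split it; it must be handled whole by Lemma~\ref{lem:SO}.

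Your parity argument for $\mathrm{Q}_2$ (``only one of each pair contributes'') should be replaced by the actual mechanism. For parts $a,b$ of opposite parity, the weight-$1$ space of $V_a\otimes V_b$ has dimension $\min(a,b)$, and weight $1$ never occurs in $V_a\otimes V_a$. Hence
\[
\mathfrak{g}[1]\cong\bigoplus_{i,j}\min(p_i,q_j)\,W_{p_i}\otimes W_{q_j}.
\]
The long-root $\mathfrak{sl}_2$ of $\mathrm{Sp}_{d_i}$ therefore sees $\sum_j\min(p_i,q_j)e_j$ copies of $V_2$. Since $\min(p_i,q_j)$ is odd exactly when $q_j<p_i$, this is congruent to $\sum_{j:\,q_j<p_i}e_j$ modulo $2$. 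In type $C$, for $\mathrm{Sp}_{e_j}$, you get $\sum_{i:\,p_i>q_j}d_i$ by the same reasoning. These are $\mathfrak{B}$ and $\mathfrak{A}$ read as sums over parts of the opposite parity, which is what the definition must intend.

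For the orthogonal factors, $\mathrm{Q}_2$ is even because the long-root $\mathfrak{sl}_2$ acts on the standard module of $\mathrm{SO}_e$ as $2V_2\oplus(e-4)V_1$ when $e\ge 4$. For $e=3$ it acts as $V_3$ instead; each copy contributes $4$, so evenness holds there for a different reason.
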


A first reduction is provided by the trivial cover.

\begin{lemma} \label{lem:n=1}
    Let \(G\) be a almost-simple classical group as in \ref{def:CBV-A}-\ref{def:CBV-D}. Every \(F\)-split nilpotent orbit whose geometry type that lies in the image of the covering Barbasch--Vogan duality map \(d_{\mathrm{BV},G}^{(1)}\) is \(G\)-quasi‑admissible.
\end{lemma}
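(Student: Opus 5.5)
Since \(n=1\), the Brylinski--Deligne cover is trivial, so the question reduces to the criteria of Propositions~\ref{prop:QAA} and~\ref{prop:QABCD} specialised to \(n=1\). In that case every divisibility condition ``\(1\mid p_i\)'', ``\(1\mid q_j\)'' holds automatically. I will run through the types in turn and isolate the one condition that survives.

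\emph{Type \(A\).} By Proposition~\ref{prop:QAA}, quasi-admissibility only requires \(1\mid p_i\) for every repeated part, so every \(F\)-split orbit of \(\mathrm{GL}_r\) is quasi-admissible. This case needs no analysis of the image of \(d^{(1)}_{\mathrm{BV},G}\).

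\emph{Types \(B\) and \(D\).} Here \(n=1\) is odd, so Proposition~\ref{prop:QABCD}(i) reduces to the single parity condition
\[
\mathfrak{B}(\mathfrak{p}_{\mathcal{O}},p_i)\in 2\mathbb{Z}
\]
for each even part \(p_i\) with \(d_i\ge 2\). Since \(p_i\) is even, \(\mathfrak{B}(\mathfrak{p}_{\mathcal O},p_i)\) counts the odd parts smaller than \(p_i\), with multiplicity. I need this count to be even.

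For \(n=1\), \(d^{(1)}_{\mathrm{com},A}\) is the transpose, so \(d^{(1)}_{\mathrm{BV}}\) is the classical Barbasch--Vogan (Spaltenstein) duality: \(\mathfrak{p}\mapsto (\mathfrak{p}^{t})^{+}_{\;B}\) for \(\mathrm{SO}_{2r+1}\) (\(n^*=1\) is odd), and \(\mathfrak{p}\mapsto(\mathfrak{p}^t)_D\) for \(\mathrm{SO}_{2r}\). Its image consists of special orbits, and the argument rests on the following standard fact.

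\emph{Key fact.} In a special orthogonal partition, the number of odd parts below any even part is even. Equivalently, in type \(B\) the transpose partition is of type \(B\), and in type \(D\) the transpose is of type \(C\).

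I would prove the key fact by writing the partition as a list of parts in decreasing order, \(\lambda_1\ge\lambda_2\ge\cdots\). The transpose condition is local: a violation occurs exactly when an even part sits at an odd position (type \(D\)), respectively an even position (type \(B\)), in the way that forces an odd multiplicity. I would then convert this into the parity of the number of odd parts lying below that even part. This uses that the total number of odd parts is odd in type \(B\) and even in type \(D\).

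\emph{Type \(C\).} This case is dual. Proposition~\ref{prop:QABCD}(ii) with \(n=1\) requires
\[
\mathfrak{A}(\mathfrak{p}_{\mathcal{O}},q_j)\in 2\mathbb{Z}
\]
for each odd part \(q_j\) with \(e_j\ge 2\). That is, the number of even parts larger than \(q_j\), with multiplicity, must be even. The map here is \(\mathfrak{p}\mapsto(\mathfrak{p}^t)^{-}_{\;C}\), whose image is again the special orbits. Special symplectic partitions are exactly those whose transpose is symplectic, and the analogous counting argument shows that the even parts above any odd part come in an even total.

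\emph{Main obstacle.} I expect the main difficulty to be making the key fact rigorous: that the image of \(d^{(1)}_{\mathrm{BV}}\) lies in the special orbits, and that specialness gives exactly the parity of \(\mathfrak{B}\) or \(\mathfrak{A}\). For the first part I would cite the classical result of Barbasch--Vogan and Spaltenstein. For the second I would do a direct combinatorial check on the partition, in the manner of Collingwood--McGovern's characterisation of special partitions.
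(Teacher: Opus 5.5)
Your proposal is correct and takes essentially the same route as the paper. Both arguments rest on two facts: the image of $d^{(1)}_{\mathrm{BV},G}$ (which you correctly identify as the classical Barbasch--Vogan duality, since $d^{(1)}_{\mathrm{com},A}$ is the transpose) consists of special orbits, and special orbits are quasi-admissible for $n=1$. The only difference is in the second fact. The paper cites Gao--Liu--Tsai for ``quasi-admissible $\Leftrightarrow$ special'', whereas you re-derive it by specialising Proposition~\ref{prop:QABCD} to $n=1$ and matching the parity conditions on $\mathfrak{B}$ and $\mathfrak{A}$ with the transpose characterisation of special partitions. That derivation checks out: every even block, which has even length, must begin at an odd position in type $D$ and at an even position in type $B$, and likewise odd blocks begin at odd positions in type $C$.
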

\begin{proof}
    As explained in \cite[\S 3]{GLT25}, for a classical group a nilpotent orbit is \(G\)-quasi‑admissible precisely when it is special. The statement for type \(A\) follows immediately. For types \(B,C,D\) it is a consequence of \cite[Proposition~6.3.11]{CM93} together with \cite[Lemma~3.11(i),(iv)]{GLLS26} and the explicit description of \(d_{\mathrm{BV},G}^{(1)}\) given in \ref{def:CBV-A}-\ref{def:CBV-D}.
\end{proof}

We shall also need a concrete algorithmic description of the operations that appear in the definition of the covering Barbasch--Vogan map. The following procedure is analogous to \cite[Proposition~6.3.8]{CM93}. Given an ordered sequence \((p_1^*,\dots ,p_{2m}^*)\) with \(p_1^*\ge \cdots \ge p_{2m}^*\), we define an operation \((\star)\) on it:
\begin{enumerate}[label=($\star$)]
    \item Let \(i_1<\dots <i_m\) be the indices for which \(p_{2i_j-1}^* > p_{2i_j}^*\). Replace each pair \((p_{2i_j-1}^*,p_{2i_j}^*)\) by \((p_{2i_j-1}^*-1,\;p_{2i_j}^*+1)\); leave all other entries unchanged.
\end{enumerate}

We can deduce the following proposition.

\begin{proposition} \label{prop:alg}
    Let \(\mathfrak{p}\) be a partition.
    \begin{enumerate}[label=(\roman*)]
        \item \textbf{\(X\)-collapse \((X\in\{B,D\})\):} Take all even parts of \(\mathfrak{p}\) (adding a zero if necessary) and arrange them in decreasing order as \((p_1^*,\dots ,p_{2m}^*)\). Apply \((\star)\) to this sequence. The \(X\)-collapse of \(\mathfrak{p}\) is obtained by combining the resulting parts with the original odd parts.
        
        \item \textbf{\(X\)-collapse \((X=C)\):} Take all odd parts of \(\mathfrak{p}\) and arrange them in decreasing order as \((p_1^*,\dots ,p_{2m}^*)\). Apply \((\star)\); the result together with the original even parts gives the \(C\)-collapse.
        
        \item \textbf{\((\cdot)^+_{\;\;B}\):} Take all even parts of \(\mathfrak{p}\) (adding a zero if necessary) and arrange them in decreasing order as \((p_0^*,p_1^*,\dots ,p_{2m}^*)\). Apply \((\star)\) to \((p_1^*,\dots ,p_{2m}^*)\) and replace \(p_0^*\) by \(p_0^*+1\).
        
        \item \textbf{\((\cdot)^-_{\;\;C}\):} Take all odd parts of \(\mathfrak{p}\) and arrange them in decreasing order as \((p_1^*,\dots ,p_{2m}^*,p_{2m+1}^*)\). Apply \((\star)\) to \((p_1^*,\dots ,p_{2m}^*)\) and replace \(p_{2m+1}^*\) by \(p_{2m+1}^*-1\).
        
        \item \textbf{\((\cdot)^{+-}_{\;\;\;\;C}\):} Take all odd parts of \(\mathfrak{p}\) and arrange them in decreasing order as \((p_0^*,p_1^*,\dots ,p_{2m}^*,p_{2m+1}^*)\). Apply \((\star)\) to \((p_1^*,\dots ,p_{2m}^*)\) and replace \(p_0^*\) by \(p_0^*+1\) and \(p_{2m+1}^*\) by \(p_{2m+1}^*-1\).
    \end{enumerate}
    In each case, if the required sequence contains fewer than two elements, the operation \((\star)\) is vacuous.
\end{proposition}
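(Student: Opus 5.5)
The statement to be proved is Proposition~\ref{prop:alg}, the algorithmic description of the collapses and the composite operations $(\cdot)^+_{\;\;B}$, $(\cdot)^-_{\;\;C}$, $(\cdot)^{+-}_{\;\;\;\;C}$. The plan is to reduce each item to the standard collapse algorithm of \cite[Lemma~6.3.3, Proposition~6.3.8]{CM93}. That algorithm is: for $X\in\{B,D\}$, repeatedly take the largest even part $q$ occurring with odd multiplicity, replace its last occurrence by $q-1$, and raise the largest part $r<q-1$ to $r+1$. The $C$ case is the same with odd parts.

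\textbf{Item (i).} I would first show that only the parity-relevant parts move. Take the even parts, written as the sequence $(p_1^*,\dots,p_{2m}^*)$ with a zero appended if needed. Consider a pair at indices $(2i-1,2i)$. If its two entries are equal, that value has even multiplicity locally, and the collapse never touches it. If $p_{2i-1}^*>p_{2i}^*$, then in the running parity count the value $p_{2i-1}^*$ is the largest remaining even part with odd multiplicity. The collapse lowers it to $p_{2i-1}^*-1$. The part that gets raised is the next even part $p_{2i}^*$: every part strictly between $p_{2i}^*$ and $p_{2i-1}^*-1$ is odd and has even multiplicity, so it is skipped. The raised part becomes $p_{2i}^*+1$.

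I will argue this by induction on the number of indices $i_j$. Processing pairs from the top down matches the order of the CM procedure, and after each step the multiplicity parities of the remaining even parts are restored. The main care needed is to check that the newly created odd parts $p^*-1$ and $p^*+1$ do not interact with later steps. They cannot, since later steps only look at even parts with odd multiplicity.

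\textbf{Item (ii)} is the identical argument with the roles of odd and even swapped. The partition has even total size, so the number of odd parts is even, and no padding with zero is needed.

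\textbf{Items (iii)--(v).} Adding $1$ to the largest part changes parity only for $p_0^*$, the first part. Consider the even-part sequence of $\mathfrak{p}^+$. If $p_0^*$ is the largest part and is even, then $p_0^*+1$ is odd and drops out of that sequence, leaving $(p_1^*,\dots,p_{2m}^*)$ to be collapsed by (i). If instead the largest part is odd, I will need to check that the statement's indexing still works; I expect a short case analysis here, possibly using the convention of adding a zero.

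Similarly, $\mathfrak{p}^-$ subtracts $1$ from the smallest part. For (iv) I would show that the smallest odd part $p_{2m+1}^*$ becomes even and leaves the odd-part sequence, and then apply (ii). Item (v) combines both adjustments.

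The hard step is the parity and indexing bookkeeping in (iii)--(v), namely when the largest or smallest part of $\mathfrak{p}$ does not have the parity the statement implicitly assumes. I would handle this by a direct case split on the parity of the extreme part, checking that the $\pm1$ modification commutes with the collapse of the remaining parts.
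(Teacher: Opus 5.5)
Your reduction of (i) to the Collingwood--McGovern procedure rests on a false single-step claim. In that procedure, after the last occurrence of $q=p_{2i-1}^*$ is lowered, the part raised is the largest part $r<q-1$, whatever its parity. The parts strictly between $p_{2i}^*$ and $p_{2i-1}^*-1$ are odd, but nothing forces them to have even multiplicity: $\mathfrak{p}$ is arbitrary, and the $B$/$D$ condition only constrains even parts. In any case they are not skipped. For $\mathfrak{p}=[6,3,2]$ the first step gives $[5,4,2]$, raising the odd part $3$ rather than $p_2^*=2$; only the second step gives $[5,3,3]$. So your inductive invariant fails. You claim that after each step the parities of the remaining even parts are restored and that new parts never interact with later steps. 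In fact a step can create a new even part of odd multiplicity, which the next step then consumes. The final output does agree with $(\star)$, but your argument does not prove it, and the same defect affects (ii).

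The missing idea is that the steps group into telescoping cascades. Take an unequal pair $a=p_{2i-1}^*>b=p_{2i}^*$, and let $c_1>\cdots>c_\ell$ be the distinct part values in the open interval $(b,a-1)$. These are odd, because no even part lies strictly between $b$ and $a$. The procedure performs the following sequence:
\begin{enumerate}
\item $a\mapsto a-1$ and $c_1\mapsto c_1+1$;
\item $c_1+1\mapsto c_1$ and $c_2\mapsto c_2+1$;
\item and so on, ending with $c_\ell+1\mapsto c_\ell$ and $b\mapsto b+1$.
\end{enumerate}
At each stage $c_j+1$ is an isolated even value. Since the pairs above are already resolved, it is the largest even part of odd multiplicity. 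The net effect is $(a,b)\mapsto(a-1,b+1)$, and the surviving even parts keep their original pairing. You also need that the top of the first unequal pair has odd multiplicity; this holds because its block starts at an odd index.

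Alternatively, you can bypass the procedure entirely. The output is an $X$-partition, and it is dominated by $\mathfrak{p}$, since each change is a unit transfer between parts differing by at least $2$. Maximality then follows from a parity argument on partial sums.

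For (iii)--(v), the case you defer resolves uniformly. If the extreme part has the other parity, the modified part joins the sequence and forms an unequal pair with its neighbour, which collapses back. For instance, in (iii) with odd largest part $o$, the pair $(o+1,p_0^*)$ becomes $(o,p_0^*+1)$. In (iv) with even smallest part $e$, the pair $(p_{2m+1}^*,e-1)$ becomes $(p_{2m+1}^*-1,e)$. Both are exactly the stated formula.
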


Observe that the operations in Proposition~\ref{prop:alg} always \emph{increase} odd parts and \emph{decrease} even parts for types \(B\) and \(D\), while they do the opposite for type \(C\).

Fix a partition \(\mathfrak{p}=[o_1,\dots ,o_h]\) labelling an orbit of the dual group \(\overline{G}^\vee\). Set
\[
\mathfrak{p}_1 := d^{(1)}_{\mathrm{com},A}(\mathfrak{p}) = [p_1^{\prime\prime},\dots ,p_u^{\prime\prime}],\qquad
\mathfrak{p}_{n} := d^{(n)}_{\mathrm{com},A}(\mathfrak{p}) = [p_1^\prime,\dots ,p_v^\prime],
\]
and write the image under the covering Barbasch-Vogan duality as
\[
\mathfrak{q} := d_{\mathrm{BV},G}^{(n)}(\mathfrak{p}) = [q_1^\prime,\dots ,q_w^\prime],
\]
all parts being listed in non‑increasing order. By construction we have, for each \(i\),
\[
p_i^\prime = \sum_{j=n(i-1)+1}^{ni} p_j^{\prime\prime}
\]
(with the convention that \(p_j^{\prime\prime}=0\) if \(j>u\)). A part \(p_i^\prime\) will be called \textit{constant} if all summands \(p_j^{\prime\prime}\) appearing in it are equal.

\subsection{Type \(A\)} \label{P:A}
For type \(A\) we have \(\mathfrak{q}=\mathfrak{p}_{n}\). By the decreasing order, a part \(p_i^\prime\) can have multiplicity greater than one only when it is constant. In that case clearly \(n\mid p_i^\prime\). Proposition~\ref{prop:QAA} therefore implies that every \(F\)-split nilpotent orbit whose geometry type included in the image of \(d_{\mathrm{BV},G}^{(n)}\) is quasi‑admissible.

\subsection{Types \(B\) and \(D\)} \label{P:B}
The argument is uniform for the two types. Recall that \(n^* = n/\gcd(n,2)\).

\subsubsection*{Case \(n^*\) odd}
Let \(q_j^\prime\) be an even part of \(\mathfrak{q}\). It must arise from the part with uniform values $p_i^\prime = q_j^\prime$ of \(\mathfrak{p}_{n^*}\) that will be decreased by the combinatorial operations. If $p_i^\prime$ is not constant, then its multiplicity must be $1$. Thus, if the multiplicity of $q_j^\prime$ is not small than $2$, we always have $p_i^\prime$ constant. Consequently \(n^*\mid p_i^\prime\), and because \(n^*\) is odd we obtain \(n\mid q_j^\prime\).

From the algorithm in Proposition~\ref{prop:alg} one deduces the congruence
\begin{equation*}\label{eq:modB}
\mathfrak{B}(\mathfrak{q},q_j^\prime) \equiv \mathfrak{B}(\mathfrak{p}_{n^*},p_i^\prime) \pmod 2 .
\end{equation*}
If \(\mathfrak{B}(\mathfrak{q},q_j^\prime)\) were odd, then \(\mathfrak{B}(\mathfrak{p}_{n^*},p_i^\prime)\) would be odd, which would force \(\sum_{l=i+1}^{v}p_l^\prime\) to be odd. However, Lemma~\ref{lem:n=1} applied to the trivial cover gives \(\mathfrak{B}(\mathfrak{p}_1,p_{n^*i}^{\prime\prime})\in2\mathbb{Z}\); this implies that \(\sum_{l=n^*i+1}^{u}p_l^{\prime\prime}= \sum_{l=i+1}^{v}p_l^\prime\) is even, a contradiction. Hence \(\mathfrak{B}(\mathfrak{q},q_j^\prime)\) is even.

Now let \(q_j^\prime\) be an odd part of \(\mathfrak{q}\) with multiplicity at least three. Such a part must come from the part with uniform values $p_i^\prime = q_j^\prime$ of \(\mathfrak{p}_{n^*}\) that will be increased. If $p_i^\prime$ is constant, then $n^* \mid q_j^\prime$. If $p_i^\prime$ is not constant, then $p_{i-1}^\prime = p_i^\prime + 1$, $p_{i+1}^\prime = p_i^\prime - 1$. It means that there are only two distinct values in $[p_{n^*(i-1)+1}^{\prime\prime}, \cdots, p_{n^*i}^{\prime\prime}]$. If $p_{i-1}^\prime$ and $p_{i+1}^\prime$ are constant, then $p_{n^*(i-1)+1}^{\prime\prime} = p_{n^*(i-1)}^{\prime\prime}$, $p_{n^*i}^{\prime\prime} = p_{n^*i+1}^{\prime\prime}$ are even, which contradiction with the oddness of $q_j^\prime$. Without loss of generality, we assume that $p_{i-1}^\prime$ is not constant, then $p_{i-1}^\prime > p_i^\prime + 1$, which also leads to a contradiction.

All conditions of Proposition~\ref{prop:QABCD}(i) are thus satisfied; consequently the \(F\)-split nilpotent orbit corresponding to \(\mathfrak{q}\) is \(\overline{G}^{(n)}\)-quasi‑admissible.

\subsubsection*{Case \(n^*\) even}
For an even part \(q_j^\prime\) with multiplicity at least two, the same reasoning as above produces a constant \(p_i^\prime = q_j^\prime\) in \(\mathfrak{p}_{n^*}\). The parity relation~\eqref{eq:modB} still holds. If \(\mathfrak{B}(\mathfrak{p}_{n^*},p_i^\prime)\) is even, then there exists an index \(t_0\) such that \(\sum_{t=1}^{t_0}o_t = n^*(i-1)\sum_{l=i}^{v}p_l^\prime\) is even; by the definition of \(\mathfrak{p}\) this forces \(t_0 = p_i^\prime/n^*\) to be even, whence \(n\mid p_i^\prime = q_j^\prime\). If \(\mathfrak{B}(\mathfrak{p}_{n^*},p_i^\prime)\) is odd, the same argument shows that \(p_i^\prime/n^*\) is odd, and therefore \(\gcd(p_i^\prime,n)=n/2\).

For an odd part \(q_j^\prime\) with multiplicity at least three the discussion is identical to the case where \(n^*\) is odd.

Thus in all situations the criteria of Proposition~\ref{prop:QABCD}(i) are met, and the orbit is quasi‑admissible.

\subsection{Type \(C\)} \label{P:C}
The verification for even parts in type \(C\) is completely analogous to the one for types \(B\) and \(D\); we therefore concentrate on the odd parts.

\subsubsection*{Case \(n\) odd}
Let \(q_j^\prime\) be an odd part of \(\mathfrak{q}\) with multiplicity at least two. It originates from the part with uniform values $p_i^\prime = q_j^\prime$ of \(\mathfrak{p}_{n}\) that will be decreased. It is clear that $p_i^\prime$ must be constant. Thus, we have $n \mid p_i^\prime = q_j^\prime$.

From the algorithm we obtain the parity relation
\begin{equation*}\label{eq:modA}
\mathfrak{A}(\mathfrak{q},q_j^\prime) \equiv \mathfrak{A}(\mathfrak{p}_{n},p_i^\prime) \pmod 2 .
\end{equation*}
If \(\mathfrak{A}(\mathfrak{p}_{n},p_i^\prime)\) were odd, there would exist an index \(t_0\) such that \(\sum_{t=t_0}^{h}o_t\) is odd, which would imply \(\sum_{t=1}^{t_0-1}o_t\) even; by definition of \(\mathfrak{p}\) this would force \(p_i^\prime/n\) to be even, contradicting the oddness of \(p_i^\prime\). Hence \(\mathfrak{A}(\mathfrak{p}_{n},p_i^\prime)\) is even, and so is \(\mathfrak{A}(\mathfrak{q},q_j^\prime)\).

\subsubsection*{Case \(n\) even with \(n/2\) odd}
In this situation the parity relation becomes
\[
\mathfrak{A}(\mathfrak{q},q_j^\prime) \equiv \mathfrak{A}(\mathfrak{p}_{n},p_i^\prime) + 1 \pmod 2 .
\]
An argument parallel to the previous one shows that \(\mathfrak{A}(\mathfrak{p}_{n},p_i^\prime)\) must be odd. Additionally, we have $\gcd(n, q_j^\prime) = n/2$ since $q_j^\prime \notin 2\mathbb{Z}$.

\subsubsection*{Case \(n\) even with \(n/2\) even}
Here \(n/2\) is even, so any odd part of \(\mathfrak{q}\) can occur at most once. The conditions of Proposition~\ref{prop:QABCD}(ii) are therefore vacuous for odd parts.

Collecting the three cases we see that all requirements of Proposition~\ref{prop:QABCD}(ii) are satisfied; hence the orbit is quasi‑admissible.

\medskip

The discussion above completes the proof of Theorem~\ref{Main result} for classical groups.

\subsection{Proof of Theorem~\ref{Main result}}
Combining the previous sections we obtain our second main result.

\begin{theorem} \label{thm:main}
    Let \(\overline{G}^{(n)}\) be the \(n\)-fold cover of any of \(\mathrm{GL}_r\), \(\mathrm{SO}_{2r+1}\), \(\mathrm{SO}_{2r}\), \(\mathrm{Sp}_{2r}\) and simply-connected almost-simple exceptional Lie group. Then for every positive integer \(n\), every \(F\)-split nilpotent orbit whose geometry type contained in the image of the covering Barbasch--Vogan duality map \(d_{\mathrm{BV},G}^{(n)}\) is \(\overline{G}^{(n)}\)-quasi‑admissible.
\end{theorem}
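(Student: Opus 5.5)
The proof splits by Cartan type. For the classical groups $\mathrm{GL}_r$, $\mathrm{SO}_{2r+1}$, $\mathrm{SO}_{2r}$ and $\mathrm{Sp}_{2r}$, the statement is exactly what Subsections~\ref{P:A}, \ref{P:B} and \ref{P:C} establish. In each of them one writes $\mathfrak{q}=d^{(n)}_{\mathrm{BV},G}(\mathfrak{p})$ through $\mathfrak{p}_n=d^{(n)}_{\mathrm{com},A}(\mathfrak{p})$ and the collapse algorithm of Proposition~\ref{prop:alg}, and then checks the criteria of Proposition~\ref{prop:QAA} and Proposition~\ref{prop:QABCD}. The case $n=1$ is handled by Lemma~\ref{lem:n=1}. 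So for classical groups I would only cite these subsections.

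For the simply-connected exceptional groups the argument is a finite case check. For $G_2$ and $F_4$ I would take the quasi-admissibility tables of \cite{GLT25}. For $E_6$, $E_7$, $E_8$ I would take Tables~\ref{tab:e6}--\ref{tab:e8c}. The images of $d^{(n)}_{\mathrm{BV},G}$ come from \cite{GLLS26}. Since $d^{(n)}_{\mathrm{BV},G}$ depends only on the dual group $\overline{G}^\vee$ and on the residue of $n$ modulo a small set of invariants (essentially $n$ modulo $\mathrm{lcm}$ of $2,3,4,5,\dots$ up to the bad primes and the BD invariant), one can organise the check by a finite list of residue classes. For each class I would list the image orbits. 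Then I would verify in the table that each listed orbit lies in the quasi-admissible column for every $n$ in that class. Two observations shorten this. First, when $n$ is coprime to all the relevant integers, $\overline{G}^\vee\cong G$, and the image is the special-orbit image of classical Barbasch--Vogan duality. Second, all orbits with $\mathbf{G}_{\gamma,\mathrm{der}}=1$ are quasi-admissible for every $n$ and can be discarded at once. It is also useful to note that the image of $d^{(n)}_{\mathrm{BV}}$ is contained in the set of special orbits in the sense of \cite{GLLS26}. So rows marked ``no'' under ``special'' never need checking, unless \cite{GLLS26} produces non-special orbits for specific $n$. Examples are $2A_3$ at $n=4$, $A_7$ at $n=8$, $D_7$ at $n=4$, and $D_5(a_1)+A_2$ at $n=4,12$. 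The tables show exactly those $n$ as quasi-admissible, which I expect to match.

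The main obstacle is this exceptional bookkeeping. I have to confirm that for every special orbit with a nontrivial condition in the tables the orbit lies in the image of $d^{(n)}_{\mathrm{BV}}$ only for admissible $n$. Typical cases are $D_4$ in $E_6$ (only $n=1,2$), $D_4(a_1)+A_2$ in $E_8$ ($n\mid 6$), and $A_4+A_2$ in $E_7$ ($n\mid 15$). Conversely, an orbit quasi-admissible only for $n=1$ should appear in the image only for the linear-type covers, for which $\overline{G}^\vee$ is the ordinary dual. I would first compute, for each exceptional type, the finitely many distinct maps $d^{(n)}_{\mathrm{BV}}$ (via the dual root datum $Y_{Q,n}$). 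I would then match the resulting images row by row against the tables.
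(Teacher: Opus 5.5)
Your skeleton is the paper's. For the classical groups it simply invokes Subsections~\ref{P:A}--\ref{P:C}. For the exceptional groups it is a finite comparison of Tables~\ref{tab:e6}--\ref{tab:e8c} and \cite{GLT25} with the description of the duality in \cite[Appendix~A]{GLLS26}.

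What fails is the structure you propose for organising that comparison. The map $d^{(n)}_{\mathrm{BV},G}$ is \emph{not} determined by $\overline{G}^\vee$ (or the datum $Y_{Q,n}$) together with a residue of $n$. Nor is its image, for $n$ coprime to the small primes, the image of the ordinary Barbasch--Vogan duality.

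Type $A$ (Subsection~\ref{def:CBV-A}) already shows this. One has $\mathfrak{s}(m;n)=[m]$ once $n\ge m$, so $d^{(n)}_{\mathrm{com},A}$ sends every partition to $[r]$ for $n\ge r$, while $d^{(1)}_{\mathrm{com},A}$ is transposition. Hence $n=1$ and $n=1+N$ with $N\ge r-1$ are congruent modulo $N$ but give different maps.

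For simply-connected $E_8$ the coroot lattice is unimodular, so $Y_{Q,n}=nY$ and $\overline{G}^\vee$ is of type $E_8$ for every $n$. If your first observation held, then for $n=7$ the image would contain $\{0\}=d_{\mathrm{BV}}(\text{regular})$, and also $A_1$, $2A_1$, $A_2,\dots$. By Table~\ref{tab:e8}, all of these are quasi-admissible only for $n=1$. Carried out as proposed, your bookkeeping would produce counterexamples to the statement you are proving.

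The image depends on $n$ itself, not on a residue. In type $A$ it rises towards the regular orbit as $n$ grows (compare $\mathfrak{s}(m;n)=[n^a b]$) and is constant for $n\ge r$. So the reduction to finitely many cases must come from the case division in \cite[Appendix~A]{GLLS26}, which in type $A$ is stabilisation, not periodicity. You must read off the image for each relevant $n$ from that source, and only then compare with the tables.

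For the same reason the ``special'' column is no filter. It records ordinary specialness, and non-special orbits do lie in the image for particular $n$, as your own examples $2A_3$, $A_7$, $D_7$, $D_5(a_1)+A_2$ show.

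With both shortcuts removed, your final sentence is exactly the paper's argument: compute each distinct $d^{(n)}_{\mathrm{BV}}$ from \cite{GLLS26}, not from $Y_{Q,n}$ alone, and match row by row.
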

\begin{proof}
    The case of classical groups is proved in \ref{P:A}-\ref{P:C}. For exceptional groups, the claim follows from the explicit calculations presented in §\ref{sec:E} together with the data compiled in \cite[\S~3.4,\S~3.5]{GLT25} and the description of the covering duality given in \cite[Appendix.A]{GLLS26}.
\end{proof}

\subsection{Further discussion}
Retain the setting of Theorem~\ref{thm:main}. An interesting phenomenon observed is the existence of nilpotent orbits that are not quasi‑admissible for \emph{any} cover degree \(n\). To elaborate, we introduce the following notation. By abuse of notation, we identify each \(F\)-split nilpotent orbit with its geometry type. For each integer \(n\ge 1\) let
\[
\mathcal{N}^{(n)}_{\mathrm{QA}}(\mathbf{G})\subset\mathcal{N}(\mathbf{G}),\qquad 
\mathcal{N}^{(n)}_{\mathrm{BV}}(\mathbf{G})\subset\mathcal{N}(\mathbf{G})
\]
denote, respectively, the set of \(\overline{G}^{(n)}\)-quasi‑admissible orbits and the image of the covering Barbasch--Vogan map \(d_{\mathrm{BV},G}^{(n)}\). For \(W\in\{\mathrm{QA},\mathrm{BV}\}\) define
\begin{align*}
    \mathcal{N}_{W}(\mathbf{G})&:=\bigcup_{n \geq 1} \mathcal{N}^{(n)}_{W}(\mathbf{G})\\
    \mathcal{N}^{(0)}_{W}(\mathbf{G})&:=\mathcal{N}(\mathbf{G})-\mathcal{N}_{W}(\mathbf{G})
\end{align*}
Thus \(\mathcal{N}^{(0)}_{\mathrm{QA}}(\mathbf{G})\) consists of orbits that are not quasi‑admissible for any $n$, and \(\mathcal{N}^{(0)}_{\mathrm{BV}}(\mathbf{G})\) consists of orbits that never appear as images of the covering Barbasch-Vogan duality. Regarding \(\mathcal{N}^{(0)}_{\mathrm{QA}}(\mathbf{G})\), the following statement holds.

\begin{proposition} \label{prop:nonqa}
    Keep the notion above.
    \begin{enumerate}[label=(\roman*)]
        \item If \(\mathbf{G}\) is of type \(B\), then \(\mathcal{N}^{(0)}_{\mathrm{QA}}(\mathbf{G})\neq\varnothing\) if and only if \(r\ge 4\).
        \item If \(\mathbf{G}\) is of type \(C\), then \(\mathcal{N}^{(0)}_{\mathrm{QA}}(\mathbf{G})\neq\varnothing\) if and only if \(r\ge 5\).
        \item If \(\mathbf{G}\) is of type \(D\), then \(\mathcal{N}^{(0)}_{\mathrm{QA}}(\mathbf{G})\neq\varnothing\) if and only if \(r\ge 6\).
        \item If \(\mathbf{G}\) is of type \(E_r\), then \(\mathcal{N}^{(0)}_{\mathrm{QA}}(\mathbf{G})\neq\varnothing\) if and only if \(r=7,8\).
    \end{enumerate}
\end{proposition}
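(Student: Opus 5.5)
Part (iv) is read off directly from Tables~\ref{tab:e6}--\ref{tab:e8c}. For \(E_6\) every row of Table~\ref{tab:e6} lists at least one admissible \(n\), so \(\mathcal{N}^{(0)}_{\mathrm{QA}}=\varnothing\). For \(E_7\) the orbits \((3A_1)'\), \((A_3+A_1)'\) and \((A_5)'\) are marked "no such \(n\)". For \(E_8\) the orbits \(3A_1\), \(A_3+A_1\), \(A_5\) and \(A_5+A_1\) are marked "no such \(n\)". The mechanism in each of these cases is the same. There are two factors \(\mathbf{G}^{a}\) and \(\mathbf{G}^{b}\) of \(\mathbf{G}_{\gamma,\mathrm{der}}\), one with \(\mathrm{Q}_2\) odd and \(\mathrm{Q}_1=1\), forcing \(n=2\) by Proposition~\ref{prop:quasi}(ii), and one with \(\mathrm{Q}_2\) even and \(\mathrm{Q}_1=1\), forcing \(n=1\) by Proposition~\ref{prop:quasi}(i). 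By Proposition~\ref{prop:product} the two conditions are intersected, and the intersection is empty. I would restate this explicitly for one example, \((3A_1)'\) in \(E_7\) with pairs \((1,15)\) and \((1,8)\), and note that the others are identical in form. In the case \(\mathrm{Q}_1^b=3\) (\((A_5)'\), \(A_5+A_1\)) the second factor gives \(n\in\{6\}\) or \(n\mid 3\), and this still meets \(n=2\) emptily.

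For the classical types I would work entirely with Proposition~\ref{prop:QABCD}. For the "if" direction I would exhibit explicit partitions. The guiding principle mirrors the exceptional case: produce one repeated part whose condition forces \(n\) odd, or forces \(n\mid\) an odd number, together with another repeated part whose condition forces \(n\) even. For type \(B\) (\(r\ge 4\), so \(2r+1\ge 9\)) I would first try \(\mathfrak{p}=[2,2,1^{5}]\), padded with extra \(1\)'s for larger \(r\). There the odd part \(q=1\) has \(e\ge 3\). If \(n\) is odd the condition gives \(n\mid 1\), so \(n=1\). If \(n\) is even the condition gives \(n\mid 4\) or \(n\mid 2\). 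The even part \(2\) has \(d=2\), and \(\mathfrak{B}(\mathfrak{p},2)\) counts even parts below \(2\) with the right parity, so it is \(0\). Then \(n=1\) requires \(\mathfrak{B}\) to be even, which is fine, so this attempt fails. The repair is to choose a partition in which \(\mathfrak{B}(\mathfrak{p},p_i)\) is odd, for example \([4,4,2,1]\) (dimension \(11\), \(r=5\)), or in general a pair of equal even parts sitting above a single smaller even part. With \(\mathfrak{B}\) odd, \(n\) odd is excluded, and for \(n\) even we need \(\gcd(n,4)=n/2\), giving \(n\in\{8\}\). I then add an odd part with multiplicity \(\ge 3\), or a further even pair with \(\mathfrak{B}\) even, that forbids \(n=8\). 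I would tune such examples by hand down to the minimal rank, and extend them to all larger \(r\) by appending parts \(1\) in pairs, or a single part, preserving parity. Types \(C\) and \(D\) are handled the same way with the roles of odd and even exchanged, using the \(\mathfrak{A}\) statistic for \(C\) and the \(D\) version of (i).

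For the "only if" direction, i.e.\ small rank (\(B_{\le 3}\), \(C_{\le 4}\), \(D_{\le 5}\)), I would enumerate all \(B\)/\(C\)/\(D\)-partitions. There are few of them. For each partition I would apply Proposition~\ref{prop:QABCD} and exhibit a working \(n\), typically \(n=1\) for special orbits (using \(n=1\) quasi-admissible \(\Leftrightarrow\) special, as in the proof of Lemma~\ref{lem:n=1}), and \(n=2\) or \(n=4\) for the nonspecial ones. The main obstacle will be this finite but fiddly enumeration, together with finding the minimal-rank witnesses in the "if" direction. The \(\mathfrak{A}\) and \(\mathfrak{B}\) parity statistics interact delicately with the gcd conditions, so in the worst case I would fall back on a systematic case table, listing each low-rank partition alongside its admissible set of \(n\).
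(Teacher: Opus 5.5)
Your part (iv) is the paper's argument: read the ``no such $n$'' rows off Tables~\ref{tab:e6}--\ref{tab:e8c}. There is one harmless slip. For $(A_5)'$ and $A_5+A_1$ one has $\mathrm{Q}_2^b=8,22$, both even, so the second factor gives $n\mid 3$. Your alternative ``$n\in\{6\}$'' is miscomputed: $n/\gcd(n,3)=2$ means $n\in\{2,6\}$, which would not miss $n=2$.

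The genuine gap is in the classical ``if'' direction. You reject $[2^2,1^{2r-3}]$, which is exactly the paper's witness for type $B$, because you evaluate $\mathfrak{B}(\mathfrak{p},2)$ by counting even parts below $2$ and get $0$. But the condition $p_i-p+1\in 2\mathbb{Z}$ selects parts of parity \emph{opposite} to $p$. So for an even part $p$, $\mathfrak{B}(\mathfrak{p},p)$ is the number of odd parts smaller than $p$, counted with multiplicity.

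This is what makes it the parity of $\mathrm{Q}_2$ for the $\mathrm{Sp}_{d_p}$ factor of the centraliser. Restricted to that factor, $\mathfrak{g}[1]$ is a sum of standard modules, $\min(p,q)\,e_q$ of them for each odd part $q$. If you sum only over even parts, $\mathfrak{B}(\mathfrak{p},p)$ vanishes identically. You would then find $n=1$ admissible for the minimal orbit $[2^2,1^{2r-3}]$ of $B_r$. That orbit is non-special, so this contradicts the criterion ``quasi-admissible for $n=1$ iff special'' that you yourself plan to use.

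With the correct count, $\mathfrak{B}([2^2,1^{2r-3}],2)=2r-3$ is odd. So the part $2$ excludes every odd $n$, and for even $n$ it demands $\gcd(n,2)=n/2$, i.e.\ $n=4$. The part $1$ has multiplicity $2r-3\ge 5$ when $r\ge 4$, which forces $n\in\{1,2\}$. The intersection is empty. For $r=3$ the multiplicity is $3$, the condition becomes $n\mid 4$, and $n=4$ survives, so the witness is sharp.

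Your replacement $[4,4,2,1]$ is not an orthogonal partition, since the even part $2$ has odd multiplicity. It labels no orbit of $\mathrm{SO}_{11}$. The rest of your plan (``add an odd part \dots\ tune by hand'') never produces a witness. As written, the ``if'' direction is therefore unproved for $B$, $C$ and $D$.

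For $C$ the paper uses $[3^2,2,1^{2r-8}]$. The part $3$ has $\mathfrak{A}=0$, forcing $n\in\{1,3\}$. The part $1$ has multiplicity at least $2$ exactly when $r\ge 5$, and it has $\mathfrak{A}=1$ (the part $2$ above it), forcing $n=2$.

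For $D$ the paper uses $[3,2^2,1^{2r-7}]$. The part $2$ has $\mathfrak{B}=2r-7$ odd, forcing $n=4$. The part $1$ has multiplicity at least $5$ exactly when $r\ge 6$, forcing $n\in\{1,2\}$; at $r=5$ its multiplicity is $3$ and $n=4$ is admitted.

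The low-rank ``only if'' direction is a finite enumeration both in your plan and in the paper, which does it by computer. It too must be run with the corrected $\mathfrak{B}$.
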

\begin{proof}
    For classical groups, explicit orbits witnessing non‑emptyness are:
    \begin{itemize}
        \item type \(B\) (\(r\ge4\)): the orbit with partition \([2^{2},1^{2r-3}]\);
        \item type \(C\) (\(r\ge5\)): the orbit with partition \([3^{2},2,1^{2r-8}]\);
        \item type \(D\) (\(r\ge6\)): the orbit with partition \([3,2^{2},1^{2r-7}]\).
    \end{itemize}
    In each case one checks directly, using Proposition~\ref{prop:QABCD}, that no integer \(n\) satisfies the required divisibility conditions. The finitely many remaining low‑rank cases can be verified by a computer enumeration.

    For the exceptional case, the statement follows directly by §\ref{sec:E} together with the data compiled in \cite[\S~3.4,\S~3.5]{GLT25}.
\end{proof}

Note that we always have \(\mathcal{N}^{(0)}_{\mathrm{QA}}(\mathbf{G})\subset\mathcal{N}^{(0)}_{\mathrm{BV}}(\mathbf{G})\). A natural question is when the two ``special'' sets \(\mathcal{N}^{(0)}_{\mathrm{QA}}(\mathbf{G})\) and \(\mathcal{N}^{(0)}_{\mathrm{BV}}(\mathbf{G})\) actually coincide. By Proposition~\ref{prop:nonqa} and a case‑by‑case inspection of the low‑rank tables, \(\mathcal{N}^{(0)}_{\mathrm{BV}}(\mathbf{G})\) is non‑empty precisely when \(\mathbf{G}\) is of type \(B\) (\(r\ge4\)), type \(C\) (\(r\ge5\)), type \(D\) (\(r\ge6\)), or type \(E\) (\(r=7,8\)). From Tables~\ref{tab:e7}--\ref{tab:e8c} we see that
\[
\mathcal{N}^{(0)}_{\mathrm{QA}}(E_7)=\mathcal{N}^{(0)}_{\mathrm{BV}}(E_7),\qquad
\mathcal{N}^{(0)}_{\mathrm{BV}}(E_8)-\mathcal{N}^{(0)}_{\mathrm{QA}}(E_8)=\{2A_2+A_1\}.
\]

For the classical cases, we have carried out computer calculations for relatively small ranks, based on which we expect the following result to hold. However, a proof is not yet available to us at this stage.

\begin{itemize}
    \item For types \(B\) and \(D\), \(\mathcal{N}^{(0)}_{\mathrm{QA}}(\mathbf{G})\subsetneq\mathcal{N}^{(0)}_{\mathrm{BV}}(\mathbf{G})\) if and only if the rank \(r\ge 10\).
    \item For type \(C\), the two sets coincide: \(\mathcal{N}^{(0)}_{\mathrm{QA}}(\mathbf{G})=\mathcal{N}^{(0)}_{\mathrm{BV}}(\mathbf{G})\) for all ranks.
\end{itemize}

%%%

\end{document}